\pdfoutput=1
\documentclass[11pt]{article}

\usepackage{amsmath,amssymb,amsthm,mathtools}
\usepackage[margin=1.15in]{geometry}
\DeclareTextSymbolDefault{\textsection}{OMS}
\usepackage[expansion=false]{microtype}
\usepackage{pgfplots}
\pgfplotsset{compat=1.18}
\usepackage[colorlinks=true,linkcolor=blue,citecolor=blue,urlcolor=blue]{hyperref}
\hypersetup{pdftitle={Asymptotic Root Dynamics under Repeated and Fractional Integration},
  pdfauthor={Patrick Lawless}}

\theoremstyle{plain}
\newtheorem{theorem}{Theorem}[section]
\newtheorem{lemma}[theorem]{Lemma}
\newtheorem{corollary}[theorem]{Corollary}
\newtheorem{proposition}[theorem]{Proposition}
\newtheorem*{cst}{Classical scaling theorem}
\theoremstyle{remark}
\newtheorem{remark}[theorem]{Remark}

\numberwithin{equation}{section}

\newcommand{\LP}{L[P]}
\newcommand{\CP}{C_P}
\newcommand{\DP}{D_P}
\newcommand{\RP}{R_P}
\newcommand{\SP}{S_P}
\DeclareMathOperator{\dist}{dist}
\DeclareMathOperator{\disc}{disc}

\begin{document}

\begin{center}
{\Large\bfseries Asymptotic Root Dynamics under Repeated\\[2pt] and Fractional Integration\par}
\vspace{12pt}
{\large Patrick Lawless\par}
\vspace{4pt}
Independent researcher, Severna Park, MD, USA\\
\texttt{plawless@alumni.nd.edu}\\
ORCID: \href{https://orcid.org/0009-0003-5453-2774}{0009-0003-5453-2774}
\end{center}
\vspace{6pt}

\begin{abstract}
Fix a polynomial $P(x)=\sum_{j=0}^{m}a_jx^j$ of degree $m$, and let
$I_kP$ be its $k$-fold antiderivative with all integration constants zero.
Borwein, Chen, and Dilcher proved that, after $x=ky$, the nonzero zeros of
$I_kP$ converge to the nonzero zeros of $\LP(y)=\sum_j a_j\,j!\,y^j$. That convergence determines the leading linear motion but neither the constant offset nor successive differences. We compute the
next two terms at a simple nonzero zero $\lambda$ of $\LP$. Integer-order and
Riemann--Liouville integration reduce, after scaling, to one rational family
$c_j(h)=\prod_{\ell\le j}(1+\ell h)^{-1}$ sampled at $h=1/k$ and $h=1/\alpha$.
We prove $r_k=k\lambda+\CP(\lambda)+\DP(\lambda)/k+O(k^{-2})$ with explicit
$\CP$ and $\DP$. Hence $\lambda$ is at once the scaled zero, the limit of successive root differences, and the continuous-order velocity, each with an explicit correction; $\CP(\lambda)$ is the offset of an asymptotic line.
For binomials the limiting spacing is a multiple of a gamma-ratio quantity,
the Factorial Fraction,
and the nonzero reduced zeros form an exact regular polygon at every positive
order. For real quadratics the collision order of two branches takes a simple
discriminant form. In every degree the collision discriminant carries a universal
factor, the one familiar from Laguerre polynomials and supported at negative
orders. In the coordinates $(\alpha,x)$ all trajectories lie on one plane curve of
degree $m$ with leading form $\LP$. At a nonzero zero of $\LP$ of multiplicity $q$, the nearby zeros split on the scale
$\sqrt k$ like the zeros of the Hermite polynomial $\mathrm{He}_q$, and for real $P$ the nonzero zeros stay real at every positive order exactly
when $\LP$ is real-rooted.
\end{abstract}

\noindent\textbf{Keywords:} iterated integrals; zeros of polynomials;
Riemann--Liouville integral; fractional calculus; asymptotic expansions;
discriminants; multiplier sequences.

\smallskip
\noindent\textbf{Mathematics Subject Classification (2020):} Primary 30C15;
Secondary 26A33, 41A60, 33B15.

\section{Introduction and relation to previous work}\label{sec:intro}

Repeated integration with zero constants produces a particularly rigid
polynomial family. Since
\begin{equation}\label{eq:monomial}
  I_k[x^j](x)=\frac{j!}{(j+k)!}\,x^{j+k},
\end{equation}
a fixed polynomial acquires gamma-factor weights, and its nonzero zeros move on
the scale $|x|\asymp k$. The known asymptotics identify the slope of that
motion. What is computed here is the next order: a constant term, which turns a
statement about direction into one about position and yields a law for
successive differences, and a further term giving the rate of both.

The leading fixed-polynomial asymptotics were established by Borwein, Chen, and
Dilcher \cite[Prop.~6.4]{BorweinChenDilcher1995}. In their notation, a
normalized $k$-fold integral converges locally uniformly to
\begin{equation}\label{eq:Ldef}
  \LP(z)=\sum_{j=0}^{m}a_j\,j!\,z^j,
\end{equation}
and consequently the nonzero zeros of $I_kP$, divided by $k$, converge as
a multiset to the nonzero zeros of $\LP$. A corollary there places the nonzero
zeros in an annulus $c_-(P)\,k<|z|<c_+(P)\,k$ for large $k$, and a later remark
\cite[\S7, Remark~1]{BorweinChenDilcher1995} notes that the operator-theoretic
results of Prather and Shaw \cite{PratherShaw1983} specialize to this setting
in a considerably weaker form. Later work of Bello-Hern\'andez, Pijeira-Cabrera, and Rivero-Castillo
developed strong asymptotics and limiting zero distributions for related
Borwein--Chen--Dilcher polynomial families \cite{BelloPijeiraRivero2020}.

Fractional-order root motion has been studied from complementary directions.
Caparatta, Pauli, and Saidak study the zero trajectories of fractional
derivatives of polynomials, principally of Riemann--Liouville type, with a
section on the Caputo derivative \cite{CaparattaPauliSaidak2025}. They treat
low-degree cases, including an explicit collision order for quadratics, and
asymptotic phenomena. Wani and Sheikh develop
terminal-dependent geometric results and first-order deformations near the
classical derivative orders for Riemann--Liouville and Caputo operators
\cite{WaniSheikh2026}. Hall, Ho, Jalowy, and Kabluchko study operators of the
form $z^{a}(d/dz)^{b}$ applied $\lceil Nt\rceil$ times to a random polynomial of
degree $N$ \cite{HallHoJalowyKabluchko2026}. Their description of the limiting
root distribution as the push-forward of a transport map is stated for $b>0$;
the case $b<0$, which includes repeated integration, is analyzed in their
\S8 for random polynomials, including regimes in which the zeros concentrate on
circles. That setting is a large-degree,
distributional regime. It belongs to a now substantial body of work on the
flow of roots under \emph{repeated differentiation}. Steinerberger formally
derived a nonlocal transport equation for that flow \cite{Steinerberger2019}.
In a companion note he related the resulting limiting root distribution to
fractional free convolution powers \cite{Steinerberger2023}, in the framework
developed by Shlyakhtenko and Tao \cite{ShlyakhtenkoTao2020}. Kiselev and Tan
analyzed that equation rigorously \cite{KiselevTan2022}: they proved global
regularity in the periodic setting and showed that its solutions track the
zeros of successive derivatives for a class of trigonometric polynomials.
Hoskins and Kabluchko obtained closed formulas in the rotationally invariant
case by a different, numerically corroborated method \cite{HoskinsKabluchko2021}.
Mart\'inez-Finkelshtein and Rakhmanov describe the limiting zero distribution
when the number of derivatives grows in proportion to the degree
\cite{MartinezFinkelshteinRakhmanov2024}. The present
work instead fixes the polynomial and studies individual reduced zeros as the
integration order tends to infinity. We therefore distinguish the present fixed-polynomial, large-integration-order regime from
large-degree limits and from endpoint-order limits.

The contributions proved here are deliberately narrow:
\begin{enumerate}
\item after scaling, integer-order integration and positive-order
Riemann--Liouville integration are exactly the same rational perturbation
problem in a single parameter $h$;
\item at every simple nonzero zero $\lambda$ of $\LP$, the associated root
branch has an explicit expansion through the $1/k$, respectively $1/\alpha$,
term;
\item the expansion yields a limiting successive spacing with an explicit
$O(k^{-2})$ leading correction, and a continuous-order velocity with the
matching $O(\alpha^{-2})$ correction;
\item the constant term is the displacement of a parallel asymptotic line, and
the corresponding polar expansion is explicit;
\item for binomials the nonzero reduced zeros form an exact regular polygon at
every positive order. For quadratics the collision order is expressed through
two discriminants, with a criterion for its positivity; in the derivative
direction it is \cite[Cor.~3.8]{CaparattaPauliSaidak2025}. In arbitrary degree
the collision discriminant carries a universal factor, supported at negative
orders, which is the factor in the classical Laguerre discriminant. The
remaining polynomial has degree at most $m(m-1)/2$. Its positive zeros are
exactly the positive collision orders, and its sign data give a parity
criterion for their existence;
\item the transformed zero $\lambda=0$ is rigid: the origin is pinned at every
positive order and the root count is exact. Restricting the moving-root results
to $\lambda\neq0$ therefore omits no moving branch. The degeneration in which
varying $P$ drives a nonzero transformed zero to the origin is identified with
a merging of discriminant values at $\alpha=-2$;
\item the trajectories form one plane curve of degree $m$ with leading form
$\LP$. On this curve the constant term of item~2 is an asymptote offset and the
collisions of item~5 are discriminant values. The order-zero endpoint is governed
by a harmonic analogue of the transform that controls the endpoint at
infinity;
\item at a zero of $\LP$ of multiplicity $q$, the nearby zeros split on the
scale $\sqrt k$ like the zeros of $\mathrm{He}_q$. In every degree the
nonzero zeros remain real, and simple, for all positive orders exactly when
$\LP$ is real-rooted.
\end{enumerate}

Two settings motivate the refinement. Riemann--Liouville integrals are the
basic operators of fractional calculus, which is used to model memory effects
such as viscoelastic relaxation and anomalous diffusion
\cite{Podlubny1999,KilbasSrivastavaTrujillo2006}. The results here describe how
the zeros of a polynomial profile, and so the sign changes of its fractional
integrals, move as the order varies. The flow of zeros under repeated
differentiation, the inverse operation, is connected with free probability and
random matrices \cite{Steinerberger2019,ShlyakhtenkoTao2020,MarcusSpielmanSrivastava2022}.
The finite free convolution and Laguerre representations of
Remark~\ref{rem:ffree} place repeated integration in the same framework.

The name \emph{Factorial Fraction} is used only for a convenient gamma-ratio
quantity in the binomial case. No novelty is claimed for the underlying gamma
identities.

The paper is organized as follows.
Sections~\ref{sec:classical}--\ref{sec:expansion} fix the operator, identify the
two scaled coefficient families, and prove the local expansion.
Sections~\ref{sec:consequences}--\ref{sec:lowdegree} draw the discrete,
continuous, and geometric consequences and treat binomials and low-degree
checks. Sections~\ref{sec:structural}--\ref{sec:curve} collect structural
properties of $L$, the collision orders, and the plane-curve synthesis.
Sections~\ref{sec:special}--\ref{sec:conclusion} give the special-function
illustrations, the Hermite law at multiple zeros, limitations, scope, and
conclusion.

\section{Repeated integration and the classical scaling limit}
\label{sec:classical}

Let $P(x)=\sum_{j=0}^{m}a_jx^j$ with $a_m\neq0$. Define $I_kP$ by $k$
integrations from $0$, equivalently by the initial-value problem
$y^{(k)}=P$ with $y(0)=y'(0)=\dots=y^{(k-1)}(0)=0$. Termwise integration gives
\begin{equation}\label{eq:Ik}
  I_kP(x)=\sum_{j=0}^{m}a_j\,\frac{j!}{(j+k)!}\,x^{j+k}.
\end{equation}
The weight has a geometric reading that explains the shape of the continuous
interpolation used below. Iterated integration from $0$ is an integral over the
ordered region $\{0\le t_1\le\dots\le t_k\le x\}$, the corner (orthoscheme)
simplex of volume $x^{k}/k!$. The normalization $1/k!$ is therefore the volume of
the unit ordered simplex; for $j=0$, \eqref{eq:monomial} gives $I_k[1](x)=x^{k}/k!$. For real $x>0$,
$x^{\alpha}/\Gamma(\alpha+1)$ in \eqref{eq:H} is the gamma-function
interpolation of that volume formula to noninteger order.

For $x\neq0$, put $x=ky$ and divide away the forced factor $x^{k}$.
Multiplying the reduced equation by $k!$ yields
\begin{equation}\label{eq:Qk}
  Q_k(y)=\sum_{j=0}^{m}a_j\,j!\,c_{j,k}\,y^j,
  \qquad
  c_{j,k}=\frac{k!\,k^{j}}{(k+j)!}=\prod_{\ell=1}^{j}\frac{k}{k+\ell}.
\end{equation}
For each fixed $j$ we have $c_{j,k}\to1$, so $Q_k\to\LP$ locally uniformly.
This limit $\LP$ is exactly the transform used in \cite{BorweinChenDilcher1995}. In
coefficient language it is the inverse of the coefficient map
$a_j\mapsto a_j/j!$ underlying Borel's integral method
\cite[\S\S4.12--4.13]{Hardy1949}; several conventions for Borel transforms are
in use, and the map $a_j\mapsto a_j/j!$ is the one meant here.

\begin{cst}[{\cite[Prop.~6.4]{BorweinChenDilcher1995}}]
Let $s=\operatorname{ord}_0P$. The $m-s$ nonzero zeros of $I_kP$, divided by
$k$, converge as a multiset to the $m-s$ nonzero zeros of $\LP$. In particular, if $\lambda$ is a simple nonzero zero of $\LP$, then for
all sufficiently large $k$ there is a unique scaled zero $y_k$ near $\lambda$,
and $y_k\to\lambda$.
\end{cst}

\section{A single perturbation parameter}\label{sec:parameter}

\subsection{Riemann--Liouville embedding}
For $\alpha>0$ and $x>0$, define the Riemann--Liouville integral
\cite{Podlubny1999,SamkoKilbasMarichev1993,KilbasSrivastavaTrujillo2006}
\begin{equation}\label{eq:RL}
  J^{\alpha}P(x)=\frac{1}{\Gamma(\alpha)}\int_0^{x}(x-t)^{\alpha-1}P(t)\,dt.
\end{equation}
For a monomial the Euler beta integral \cite[\S5.12]{DLMF} gives
\[
  \frac{1}{\Gamma(\alpha)}\int_0^{x}(x-t)^{\alpha-1}t^{j}\,dt
  =\frac{\Gamma(j+1)}{\Gamma(j+\alpha+1)}\,x^{j+\alpha}.
\]
Therefore, by linearity,
\begin{equation}\label{eq:H}
  J^{\alpha}P(x)=x^{\alpha}H(\alpha,x),
  \qquad
  H(\alpha,x)=\sum_{j=0}^{m}a_j\,\frac{j!}{\Gamma(j+\alpha+1)}\,x^{j}.
\end{equation}
At an integer $\alpha=k$ this is precisely $I_kP$. For noninteger $\alpha$ and
complex $x$ the prefactor $x^{\alpha}$ requires a branch choice; throughout,
complex root trajectories mean zeros of the reduced polynomial
$H(\alpha,\cdot)$. On the positive real axis, where $x^{\alpha}>0$, this agrees
with the literal Riemann--Liouville integral.

\subsection{Exact identification of the scaled families}
\label{subsec:identification}
Set $x=\alpha y$ in the reduced equation and multiply by $\Gamma(\alpha+1)$:
\[
  Q(\alpha,y)=\sum_{j=0}^{m}a_j\,j!\,c_j(\alpha)\,y^{j},
  \qquad
  c_j(\alpha)=\frac{\Gamma(\alpha+1)\alpha^{j}}{\Gamma(\alpha+j+1)}
  =\prod_{\ell=1}^{j}\frac{\alpha}{\alpha+\ell}.
\]
Introduce, for a complex parameter $h$,
\begin{equation}\label{eq:cjh}
  c_j(h)=\prod_{\ell=1}^{j}(1+\ell h)^{-1},
  \qquad
  Q_h(y)=\sum_{j=0}^{m}a_j\,j!\,c_j(h)\,y^{j}.
\end{equation}
Since $c_{j,k}=\prod(1+\ell/k)^{-1}$ and $c_j(\alpha)=\prod(1+\ell/\alpha)^{-1}$,
one has exactly $Q_k=Q_h$ at $h=1/k$ and $Q(\alpha,\cdot)=Q_h$ at
$h=1/\alpha$. (The subscript on $Q_k$ is an integration order, that on $Q_h$ a
value of the parameter; the two agree at $h=1/k$, so no confusion arises.) Thus
the discrete and continuous scaled problems are not merely analogous: they are
samples of one rational family. If $y(h)$ denotes a locally selected zero
branch, then
\begin{equation}\label{eq:sampling}
  r_k=k\,y(1/k),\qquad r(\alpha)=\alpha\,y(1/\alpha),
  \qquad\text{hence}\qquad r_k=r(\alpha)\big|_{\alpha=k}
\end{equation}
whenever the same local branch is selected.

\section{The local analytic expansion}\label{sec:expansion}

\subsection{Coefficient expansion}
Define $\sigma_1(j)=\sum_{\ell=1}^{j}\ell=j(j+1)/2$ and
$\sigma_2(j)=\sum_{\ell=1}^{j}\ell^{2}=j(j+1)(2j+1)/6$.

\begin{lemma}\label{lem:coeff}
For $j\ge1$, $c_j(h)$ is rational with poles exactly at $h=-1/\ell$,
$1\le\ell\le j$, the nearest to the origin being at distance $1/j$. Hence for a
polynomial of degree $m$ every coefficient $c_j(h)$ with $0\le j\le m$ is
holomorphic on the common disc $|h|<1/m$, with the convention that there is no
restriction when $m=0$. On that disc
\begin{equation}\label{eq:cexp}
  c_j(h)=1-\sigma_1(j)h
  +\tfrac12\bigl[\sigma_1(j)^{2}+\sigma_2(j)\bigr]h^{2}+O(h^{3}),
\end{equation}
uniformly for $j=0,\dots,m$ on every smaller closed disc.
\end{lemma}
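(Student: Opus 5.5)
The plan is to work directly from the product formula \eqref{eq:cjh}. Because $c_j(h)=\prod_{\ell=1}^{j}(1+\ell h)^{-1}$ is the reciprocal of the polynomial $\prod_{\ell=1}^{j}(1+\ell h)$, it is rational. Its poles are exactly the zeros of the factors, $h=-1/\ell$ for $1\le\ell\le j$. These are distinct, and the numerator is $1$, so there is no cancellation. The pole nearest the origin is the one with largest $\ell$, namely $-1/j$ at distance $1/j$. Since $1/j\ge 1/m$ for $j\le m$, all $c_j$ with $0\le j\le m$ are holomorphic on $|h|<1/m$. The case $c_0\equiv1$ is trivially entire, which also covers $m=0$.

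For the expansion \eqref{eq:cexp} I will take logarithms on $|h|<1/m$. There every factor satisfies $|\ell h|<1$, so the principal branch gives
\[
\log c_j(h)=-\sum_{\ell=1}^{j}\log(1+\ell h)
=-\sigma_1(j)h+\tfrac12\sigma_2(j)h^2+O(h^3).
\]
Exponentiating gives
\[
c_j(h)=1-\sigma_1(j)h+\tfrac12\bigl[\sigma_1(j)^2+\sigma_2(j)\bigr]h^2+O(h^3).
\]
The $h^2$ coefficient combines $\tfrac12\sigma_1^2$, coming from the square of the linear term, with $\tfrac12\sigma_2$. As a cross-check, I can instead expand the product directly. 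The $h^2$ coefficient of $\prod_\ell(1-\ell h+\ell^2h^2-\dots)$ is $\sum_\ell \ell^2+\sum_{\ell<\ell'}\ell\ell' = \sigma_2+\tfrac12(\sigma_1^2-\sigma_2)$, which agrees.

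The only point needing care is uniformity of the $O(h^3)$ remainder. I will handle it with Taylor's theorem with a Cauchy-estimate remainder. Fix $\rho<1/m$ and choose $\rho'$ with $\rho<\rho'<1/m$. On $|h|\le\rho'$ each $c_j$ is holomorphic and bounded by some $M_j$. Thus the Taylor remainder after the quadratic term obeys $|R_j(h)|\le M_j|h|^3/\bigl(\rho'^2(\rho'-|h|)\bigr)$ for $|h|\le\rho$. Only finitely many indices $j=0,\dots,m$ occur, so taking the maximum of the constants gives a bound $C|h|^3$ uniform in $j$ on the closed disc $|h|\le\rho$. I expect no genuine obstacle here. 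The main things to get right are the bookkeeping of the second-order coefficient and remembering that uniformity is automatic only because $j$ ranges over a finite set.
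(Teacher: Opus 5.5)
Your proposal is correct and follows essentially the same route as the paper: you read the poles off the finite product, expand $\log c_j(h)=-\sum_{\ell\le j}\log(1+\ell h)$ on $|h|<1/m$, exponentiate, and get uniformity because the index set $\{0,\dots,m\}$ is finite. Your Cauchy-estimate bound on the Taylor remainder and your direct check of the $h^2$ coefficient from the product spell out what the paper states in one line.
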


\begin{proof}
The pole statement follows directly from the finite product, and
$|-1/\ell|=1/\ell$ is minimized over $1\le\ell\le j$ at $\ell=j$. For $|h|<1/j$
the logarithm of each factor is given by its convergent series, so
\[
  \log c_j(h)=-\sum_{\ell=1}^{j}\log(1+\ell h)
  =-\sigma_1(j)h+\tfrac12\sigma_2(j)h^{2}+O(h^{3}).
\]
Exponentiating and using
$e^{Ah+Bh^{2}+O(h^{3})}=1+Ah+(A^{2}/2+B)h^{2}+O(h^{3})$ with
$A=-\sigma_1(j)$ and $B=\tfrac12\sigma_2(j)$ gives \eqref{eq:cexp}. Since $j$
ranges over the finite set $\{0,\dots,m\}$, the remainder is uniform on every
closed disc $|h|\le\rho<1/m$.
\end{proof}

The first cases are $c_1=1-h+h^{2}-\dots$, $c_2=1-3h+7h^{2}-\dots$,
$c_3=1-6h+25h^{2}-\dots$, and $c_4=1-10h+65h^{2}-\dots$. Define
\begin{equation}\label{eq:RS}
  \RP(y)=\sum_{j=0}^{m}j(j+1)\,j!\,a_j y^{j},
  \qquad
  \SP(y)=\sum_{j=0}^{m}\bigl[\sigma_1(j)^{2}+\sigma_2(j)\bigr]j!\,a_j y^{j}.
\end{equation}
Then Lemma~\ref{lem:coeff} gives, locally uniformly in $y$,
\begin{equation}\label{eq:Qhexp}
  Q_h(y)=\LP(y)-\tfrac12\RP(y)\,h+\tfrac12\SP(y)\,h^{2}+O(h^{3}).
\end{equation}

\subsection{The branch theorem}

\begin{theorem}[Second-order branch expansion]\label{thm:branch}
Let $\lambda$ be a simple zero of $\LP$, with no assumption that
$\lambda\neq0$. Then there are $\delta>0$ and $h_0\in(0,1/m]$ such that, for $|h|<h_0$, $Q_h$ has exactly one zero $y(h)$ in
the disc $|y-\lambda|<\delta$. The function $y(h)$ is holomorphic there and
admits the convergent expansion
\begin{equation}\label{eq:yexp}
  y(h)=\lambda+\CP(\lambda)h+\DP(\lambda)h^{2}+O(h^{3}),
\end{equation}
where
\begin{equation}\label{eq:CD}
  \CP(\lambda)=\frac{\RP(\lambda)}{2\LP'(\lambda)},
  \qquad
  \DP(\lambda)=\frac{-\LP''(\lambda)\CP(\lambda)^{2}
  +\RP'(\lambda)\CP(\lambda)-\SP(\lambda)}{2\LP'(\lambda)}.
\end{equation}
If $P$ has real coefficients and $\lambda\in\mathbb{R}$, then $y(h)$ is real
for real $h$ sufficiently close to $0$.
\end{theorem}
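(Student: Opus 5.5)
We treat $F(h,y)=Q_h(y)$ as a function of two complex variables on $\{|h|<1/m\}\times\mathbb{C}$. By Lemma~\ref{lem:coeff} every coefficient $c_j(h)$ is holomorphic on $|h|<1/m$, so $F$ is holomorphic there and polynomial in $y$. Also $F(0,y)=\LP(y)$, since $c_j(0)=1$. The hypothesis that $\lambda$ is a simple zero gives $F(0,\lambda)=0$ and $\partial_yF(0,\lambda)=\LP'(\lambda)\neq0$. Nothing in this uses $\lambda\neq0$.

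\emph{Existence and uniqueness.} We pick $\delta>0$ such that $\LP$ has no zero other than $\lambda$ in the closed disc $|y-\lambda|\le\delta$. Let $\mu=\min_{|y-\lambda|=\delta}|\LP(y)|>0$. Since $F\to\LP$ uniformly on this compact circle as $h\to0$ (by \eqref{eq:Qhexp}, or directly from continuity of the finitely many $c_j$), we can choose $h_0\in(0,1/m]$ with $|F(h,y)-\LP(y)|<\mu$ on the circle for $|h|<h_0$. Rouch\'e's theorem then shows that $Q_h$ has exactly one zero $y(h)$ in $|y-\lambda|<\delta$. Holomorphy of $y(h)$ follows from the residue formula
\[
  y(h)=\frac{1}{2\pi i}\oint_{|y-\lambda|=\delta} y\,\frac{\partial_yF(h,y)}{F(h,y)}\,dy,
\]
whose integrand is holomorphic in $h$ with a nonvanishing denominator on the contour. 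Equivalently, the holomorphic implicit function theorem gives holomorphy near $0$, and Rouch\'e extends the uniqueness to the whole disc. Because $y$ is holomorphic on $|h|<h_0$, its Taylor series converges there, so \eqref{eq:yexp} is a convergent expansion and only its coefficients remain to be identified.

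\emph{Coefficients.} We substitute $y(h)=\lambda+Ch+Dh^2+O(h^3)$ into \eqref{eq:Qhexp}. We expand $\LP(y)=\LP'(\lambda)(Ch+Dh^2)+\tfrac12\LP''(\lambda)C^2h^2+O(h^3)$ and $\RP(y)=\RP(\lambda)+\RP'(\lambda)Ch+O(h^2)$, and we use $\SP(y)=\SP(\lambda)+O(h)$. This requires local uniformity of the $O(h^3)$ remainder in $y$, which Lemma~\ref{lem:coeff} supplies on a closed disc $|h|\le\rho<1/m$. The coefficient of $h$ gives $\LP'(\lambda)C-\tfrac12\RP(\lambda)=0$, which is $\CP$. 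The coefficient of $h^2$ gives
\[
  \LP'(\lambda)D+\tfrac12\LP''(\lambda)C^2-\tfrac12\RP'(\lambda)C+\tfrac12\SP(\lambda)=0,
\]
which is exactly $\DP$ in \eqref{eq:CD}. The only point needing care is justifying that the composed remainders are $O(h^3)$. This is where the uniformity statement of Lemma~\ref{lem:coeff} matters. It is the main, though mild, obstacle.

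\emph{Reality.} If the $a_j$ and $\lambda$ are real and $h$ is real, then $\overline{F(h,\bar y)}=F(h,y)$. Hence $\overline{y(h)}$ is also a zero of $Q_h$, and it lies in the disc $|y-\lambda|<\delta$, which is symmetric about $\mathbb{R}$. By uniqueness $\overline{y(h)}=y(h)$, so $y(h)$ is real.
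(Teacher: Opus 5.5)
Your proposal is correct and follows the paper's proof essentially step for step. You use Rouch\'e's theorem on a small circle around $\lambda$ for existence and uniqueness, the residue integral for holomorphy in $h$, matching of the $h$ and $h^{2}$ coefficients of $Q_h(y(h))=0$ to get $\CP$ and $\DP$, and uniqueness in the conjugation-symmetric disc for reality; your aside that the implicit function theorem could replace the residue integral is only a cosmetic variant.
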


\begin{proof}
Because the degree is fixed, Lemma~\ref{lem:coeff} implies that
$(h,y)\mapsto Q_h(y)$ is jointly holomorphic on
$\{|h|<1/m\}\times\mathbb{C}$. Choose $\delta>0$ so that the closed disc
$K=\{|y-\lambda|\le\delta\}$ contains no zero of $\LP$ other than $\lambda$.
Since $\LP$ has no zero on $\partial K$, the quantity
$\mu=\min_{\partial K}|\LP|$ is positive. For $|h|$ sufficiently small, local
uniform convergence gives $|Q_h-\LP|<\mu$ on $\partial K$, so Rouch\'e's
theorem \cite[Ch.~4]{Ahlfors1979} implies that $Q_h$ has exactly one zero in
$K$, counted with multiplicity. Since
$\partial_y Q_0(\lambda)=\LP'(\lambda)\neq0$, that zero is simple for all
sufficiently small $|h|$, and it is given by the residue integral
\[
  y(h)=\frac{1}{2\pi i}\oint_{\partial K}
        y\,\frac{\partial_yQ_h(y)}{Q_h(y)}\,dy,
\]
the right-hand side being the sum of the zeros of $Q_h$ in $K$ counted with
multiplicity \cite[Ch.~4]{Ahlfors1979}. Since $Q_h$ has no zero on $\partial K$,
the integrand is holomorphic in $h$ for each $y\in\partial K$ and continuous in
$y$, so $y(h)$ is holomorphic on $|h|<h_0$.

Write $y(h)=\lambda+Ch+Dh^{2}+O(h^{3})$ and substitute into
$0=Q_h(y(h))$, using \eqref{eq:Qhexp}. Since $\LP(\lambda)=0$, the coefficient
of $h$ is $\LP'(\lambda)C-\tfrac12\RP(\lambda)$, which vanishes exactly when
$C=\RP(\lambda)/(2\LP'(\lambda))$. The coefficient of $h^{2}$ is
\[
  \LP'(\lambda)D+\tfrac12\LP''(\lambda)C^{2}
  -\tfrac12\RP'(\lambda)C+\tfrac12\SP(\lambda),
\]
and setting it to zero gives the stated formula for $D$. For real coefficients
and real $h$ the polynomial $Q_h$ has real coefficients, so complex conjugation
preserves its zero set; the unique zero in the conjugation-symmetric disc $K$
must therefore be real.
\end{proof}

\begin{remark}[The expansion continues to all orders]\label{rem:allorders}
Since $y$ is holomorphic at $h=0$ by Theorem~\ref{thm:branch}, its Taylor series
$y(h)=\lambda+\sum_{n\ge1}y_n(\lambda)h^{n}$ converges on a disc. When $a_0\neq0$ and $m\ge2$, the disc has radius at
most $1/m$ (Remark~\ref{rem:negorder}). Therefore
\[
  r_k=k\lambda+y_1(\lambda)+\frac{y_2(\lambda)}{k}+\frac{y_3(\lambda)}{k^{2}}+\cdots,
\]
an asymptotic expansion to every order, with $y_1=\CP$ and $y_2=\DP$. Each
$y_n$ is determined recursively by substituting the series into
$Q_h(y(h))=0$ and using \eqref{eq:cjh}; we stop at $n=2$ because those are the
terms that carry the consequences drawn in Section~\ref{sec:consequences}, not because the
structure terminates.
\end{remark}

\begin{remark}[A closed form for every order]\label{rem:torus}
The scaled polynomial $Q_h$ is $\LP$ with its coefficients $b_j=j!\,a_j$
replaced by $b_jc_j(h)$. So the branch $y(h)$ is the root $\lambda(b)$ of $\LP$,
viewed as a function of the coefficients, evaluated at the point $(b_jc_j(h))_j$. The Euler
operators $\theta_j=b_j\,\partial/\partial b_j$ commute and generate the
scalings $b_j\mapsto b_je^{s_j}$. Write
$\log c_j(h)=\sum_{r\ge1}\kappa_r(j)h^{r}$ with
$\kappa_r(j)=(-1)^{r}\sigma_r(j)/r$ and $\sigma_r(j)=\sum_{\ell\le j}\ell^{r}$.
Then
\[
  y(h)=\exp\Bigl(\sum_{r\ge1}h^{r}\Theta_r\Bigr)\lambda,\qquad
  \Theta_r=\sum_j\kappa_r(j)\,\theta_j,\qquad
  y_n=\frac1{n!}\,B_n\bigl(1!\,\Theta_1,2!\,\Theta_2,\dots,n!\,\Theta_n\bigr)\lambda,
\]
where $B_n$ is the complete Bell polynomial. Implicit differentiation gives
$\theta_j\lambda=-b_j\lambda^{j}/\LP'(\lambda)$, and homogeneity gives
$\sum_jj\,\theta_j\lambda=-\lambda$. Hence
$\CP=\Theta_1\lambda=\bigl(1-\frac12\sum_jj(j-1)\theta_j\bigr)\lambda$, in agreement
with Remark~\ref{rem:compactC}, and $\DP=\bigl(\frac12\Theta_1^{2}+\Theta_2\bigr)\lambda$.
Every $y_n$ is therefore an algebraic function of the coefficients of $P$.
\end{remark}

\begin{remark}[A compact form of $\CP$]\label{rem:compactC}
With $\theta=y\,d/dy$ one has $\theta(\theta+1)y^{j}=j(j+1)y^{j}$, so
$\RP=\theta(\theta+1)\LP=y^{2}\LP''+2y\LP'$. At a simple zero this gives
\[
  \CP(\lambda)=\lambda+\frac{\lambda^{2}\LP''(\lambda)}{2\LP'(\lambda)}.
\]
For the quadratic of Section~\ref{subsec:quadexact}, $\LP=2y^{2}-y-1$ and
$\lambda=1$, so $\CP(1)=1+\frac{4}{2\cdot3}=\frac53$. The identity
also shows that $\RP$ vanishes at every zero of $\LP$ of multiplicity at least
three, which is relevant in Section~\ref{sec:multiple}.
\end{remark}

\begin{remark}[Finite free convolution, Wishart matrices, and a gamma average]\label{rem:ffree}
Rescale $P$ so that $\LP$ is monic, that is, $m!\,a_m=1$, and let $\widehat Q_\alpha=Q_{1/\alpha}/c_m(1/\alpha)$,
which is monic of degree $m$. Since
$c_{m-s}(h)/c_m(h)=\prod_{\ell=m-s+1}^{m}(1+\ell h)$, the elementary symmetric
functions of the zeros satisfy
\[
  e_s(\widehat Q_\alpha)=e_s(\LP)\prod_{\ell=m-s+1}^{m}\Bigl(1+\frac\ell\alpha\Bigr),
  \qquad 0\le s\le m .
\]
This multiplicative rule has three consequences.
\begin{enumerate}
\item It is the finite free multiplicative convolution of Marcus, Spielman, and Srivastava \cite{MarcusSpielmanSrivastava2022}, defined by
$e_s(p\boxtimes_mq)=e_s(p)e_s(q)/\binom ms$:
$\widehat Q_\alpha=\LP\boxtimes_mK_{m,\alpha}$. Here
$K_{m,\alpha}(y)=(-1)^{m}m!\,\alpha^{-m}L_m^{(\alpha)}(\alpha y)$ is a rescaled
Laguerre polynomial, whose zeros are positive.
\item If $A=\operatorname{diag}(\lambda_1,\dots,\lambda_m)$ lists the zeros of
$\LP$ and $W$ is an $m\times m$ complex Wishart matrix with density proportional
to $(\det W)^{\alpha}e^{-\operatorname{tr}W}$, then
$\widehat Q_\alpha(y)=\mathbb{E}\det(yI-AW/\alpha)$. This classical computation rests on the fact that an
$s\times s$ principal minor of $W$ has mean $n(n-1)\cdots(n-s+1)$ with $n=\alpha+m$.
\item If $G_\alpha$ has the gamma law with shape $\alpha+1$ and rate $\alpha$,
then $\mathbb{E} G_\alpha^{j}=1/c_j(1/\alpha)$, so $\mathbb{E}\bigl[Q_{1/\alpha}(yG_\alpha)\bigr]=\LP(y)$
exactly.
\end{enumerate}
By the last item, repeated integration \emph{undoes} a multiplicative gamma
average. $G_\alpha$ concentrates at $1$ with asymptotically Gaussian
fluctuations of size $\alpha^{-1/2}$, and undoing a Gaussian average is a
backward heat flow. This backward heat flow is the mechanism behind the $\sqrt\alpha$ splitting of
multiple zeros in Proposition~\ref{prop:multiple}. For
$\LP=(y-\lambda)^{m}$ item~(1) is exact: the zeros of $J^\alpha P$ are $\lambda$
times the zeros of $L_m^{(\alpha)}$. These zeros lie at
$\alpha+\sqrt\alpha\,\xi_i+(2m+1+\xi_i^{2})/3+O(\alpha^{-1/2})$ with $\mathrm{He}_m(\xi_i)=0$. The leading terms follow from the Hermite limit of
the Laguerre polynomials \cite[\S18.7(iii)]{DLMF}; the constant term was
computed here and checked numerically. When the degree grows with $m/\alpha\to\beta>0$ and the zeros of $\LP$ are
nonnegative with compactly supported limiting distribution $\mu$, the zero
distribution of $\widehat Q_\alpha$ tends to the free multiplicative
convolution of $\mu$ with the limit law of the zeros of $K_{m,\alpha}$
\cite[Thm.~1.4 and Ex.~2.8]{ArizmendiGarzaVargasPerales2023}. That law is a
Marchenko--Pastur law dilated to have mean $1+\beta$, supported on
$[(\sqrt{1+\beta}-\sqrt\beta)^{2},(\sqrt{1+\beta}+\sqrt\beta)^{2}]$. We checked the identity of
item~(1) to $10^{-12}$ for random complex $\LP$ and checked item~(2) by sampling.
The cases $s=1$ and $s=m$ have a geometric reading, valid for every real
$\alpha>-1$. The sum of the $m$ reduced zeros is
$\frac{\alpha+m}{m}\cdot\frac{-a_{m-1}}{a_m}$, so their centroid,
$\frac{\alpha+m}{m^{2}}\cdot\frac{-a_{m-1}}{a_m}$, moves exactly along a ray;
when $a_0\neq0$ these are the nonzero zeros of $J^\alpha P$. When $a_0\neq0$,
the geometric mean of their moduli is
$\bigl|a_0/(m!\,a_m)\bigr|^{1/m}\bigl(\Gamma(\alpha+m+1)/\Gamma(\alpha+1)\bigr)^{1/m}
=\rho\bigl[\alpha+\frac{m+1}2-\frac{m^2-1}{24\alpha}+O(\alpha^{-2})\bigr]$, where
$\rho=|a_0/(m!\,a_m)|^{1/m}$.
For a binomial this is the circumradius of the regular $m$-gon of zeros. When the zeros of $\LP$ are simple and nonzero and lie in strict convex
position, the convex hull of the nonzero zeros of $I_kP$ has area
$k^2A_0+kA_1+A_2+O(1/k)$, with $A_0,A_1,A_2$ given by the shoelace formula
applied to \eqref{eq:yexp}.
\end{remark}

\begin{remark}[Superconvergence]\label{rem:superconv}
Let $\lambda\neq0$ be a simple zero and let $\nu(\lambda)$ be the order of
$y(h)-\lambda$ at $h=0$. Multiplying $Q_h(\lambda)$ by
$\prod_{\ell=1}^{m}(1+\ell h)$ gives
$N_\lambda(h)=\sum_jb_j\lambda^{j}\prod_{\ell=j+1}^{m}(1+\ell h)$, a polynomial of
degree at most $m$ with $[h^m]N_\lambda=m!\,a_0$. Since
$N_\lambda(h)=-\LP'(\lambda)(y(h)-\lambda)(1+O(h))$, we get
$1\le\nu(\lambda)\le m$ when $a_0\neq0$. Then
$r_k-k\lambda=O(k^{1-\nu})$, and for $\nu\ge2$ the spacing error in
Corollary~\ref{cor:discrete} improves to $O(k^{-\nu})$. By
Theorem~\ref{thm:branch} and Remark~\ref{rem:compactC},
$\nu\ge2$ if and only if $\CP(\lambda)=0$, if and only if
$\sum_{\mu\neq\lambda}(\lambda-\mu)^{-1}=-1/\lambda$, the sum running over the other
zeros of $\LP$ counted with multiplicity, a Stieltjes-type
equilibrium condition \cite{Szego1975}. The bound $\nu=m$ forces $N_\lambda=m!\,a_0h^m$, which
fixes $P$ up to scale, and then
$Q_{1/k}(\lambda)=a_0\big/\binom{k+m}{m}$. For $\LP=(y-1)(y-2)$ the branch at
$1$ is $y(h)=1+4h^2-24h^3+\cdots$. When the zeros of $\LP$ are simple and nonzero, at most $m-1$ of them can
have $\CP(\lambda)=0$, since $\sum_i\CP(\lambda_i)/\lambda_i=m(m+1)/2$; this
follows from $\LP''(\lambda_i)/(2\LP'(\lambda_i))=\sum_{j\neq i}(\lambda_i-\lambda_j)^{-1}$. If exactly $m-1$ do, let $a$ be the remaining zero. The
condition $\CP(\lambda)=0$ reads $\lambda\LP''(\lambda)+2\LP'(\lambda)=0$, so the
polynomial $y\LP''+2\LP'$, of degree $m-1$, vanishes at the other $m-1$
zeros, and comparing leading coefficients gives
$y\LP''+2\LP'=m(m+1)\LP/(y-a)$. With $y=az$ this is the hypergeometric
equation $z(1-z)F''+2(1-z)F'+m(m+1)F=0$, so $\LP$ is proportional to
${}_2F_1(-m,m+1;2;z)=(1-z)\,{}_2F_1(1-m,m+2;2;z)$ \cite[\S15.8]{DLMF}, and the
$m-1$ superconvergent zeros are $z=(1-x)/2$ at the zeros $x$ of the Legendre
derivative $P_m'$. For $m=3$ these are $(5\pm\sqrt5)/10$. More generally, we conjecture that
$\sum_i(\nu(\lambda_i)-1)\le m-1$ whenever the zeros of $\LP$ are simple and
nonzero; for $m\le6$ an elimination over all excess patterns supports it,
exactly except for one family of cases checked numerically with explicit
margins, and details will appear separately. Geometrically, the triangle
with vertices $0$, $r_k$, and $r_{k+1}$ has area
$\frac\nu2|\operatorname{Im}(\bar\lambda y_\nu)|\,k^{1-\nu}(1+O(1/k))$ when
$y(h)=\lambda+y_\nu h^\nu+\cdots$ and $\operatorname{Im}(\bar\lambda y_\nu)\neq0$. Indeed, twice that area is
$k(k+1)\bigl|\operatorname{Im}\bigl(\overline{y(1/k)}\,y(1/(k+1))\bigr)\bigr|$, and
$\operatorname{Im}(\overline{y(h_1)}y(h_2))=\operatorname{Im}\bigl(\bar\lambda(y(h_2)-y(h_1))\bigr)+O(h^{2\nu+1})$,
with $h_1=1/k$ and $h_2=1/(k+1)$, because the term $|y_\nu|^2h_1^\nu h_2^\nu$ is
real. So a
zero with $\nu=1$ sweeps asymptotically equal areas
$\frac12|\operatorname{Im}(\bar\lambda\CP(\lambda))|$ about the origin, and a
superconvergent zero does not.
\end{remark}

\section{Consequences for discrete and continuous order}
\label{sec:consequences}

\subsection{Integer order and successive spacing}

\begin{corollary}\label{cor:discrete}
Under the hypotheses of Theorem~\ref{thm:branch}, with $\lambda\neq0$, the corresponding nonzero
zero of $I_kP$ satisfies
\begin{equation}\label{eq:rk}
  r_k=k\lambda+\CP(\lambda)+\frac{\DP(\lambda)}{k}+O(k^{-2}).
\end{equation}
Moreover
\begin{equation}\label{eq:spacing}
  r_k-r_{k-1}=\lambda-\frac{\DP(\lambda)}{k(k-1)}+O(k^{-3}),
\end{equation}
and therefore $r_k-r_{k-1}\to\lambda$.
\end{corollary}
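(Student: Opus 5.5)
The plan is to read \eqref{eq:rk} off the sampling identity \eqref{eq:sampling} and Theorem~\ref{thm:branch}. Let $y(h)$ be the holomorphic branch near $\lambda$ given by the theorem, defined for $|h|<h_0$. For every integer $k>1/h_0$, the classical scaling theorem gives a unique scaled zero near $\lambda$, and $Q_{1/k}=Q_h$ at $h=1/k$. So that scaled zero is $y(1/k)$, and the corresponding zero of $I_kP$ is $r_k=k\,y(1/k)$. Since $\lambda\neq0$, $y(1/k)\neq0$ once $k$ is large, so $r_k$ really is a nonzero zero of $I_kP$, that is, a zero of the reduced equation and not the forced zero at the origin.

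Next I would write the Taylor remainder in a uniform form. Because $y$ is holomorphic on $|h|<h_0$, on the closed disc $|h|\le h_0/2$ we have
\[
  y(h)=\lambda+\CP(\lambda)h+\DP(\lambda)h^{2}+E(h),\qquad |E(h)|\le M|h|^{3},
\]
with $M$ independent of $h$, for instance from Cauchy estimates. Substituting $h=1/k$ and multiplying by $k$ gives \eqref{eq:rk}, with remainder $kE(1/k)=O(k^{-2})$.

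For the spacing law \eqref{eq:spacing} I would not subtract two copies of \eqref{eq:rk}. The remainders $O(k^{-2})$ would not cancel, and the result would only be accurate to $O(k^{-2})$, which is too crude for an $O(k^{-3})$ claim. Instead I would use the function $F(h)=y(h)/h$, so that $r_k=F(1/k)$, and expand the difference in powers of $1/k$ using the full convergent series of Remark~\ref{rem:allorders}:
\[
  r_k=k\lambda+\CP(\lambda)+\sum_{n\ge2}\frac{y_n}{k^{n-1}} .
\]
Then
\[
  r_k-r_{k-1}=\lambda+\DP(\lambda)\Bigl(\frac1k-\frac1{k-1}\Bigr)+\sum_{n\ge3}y_n\bigl(k^{1-n}-(k-1)^{1-n}\bigr).
\]
The first correction equals $-\DP(\lambda)/(k(k-1))$. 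For $n\ge3$, the mean value theorem gives $|k^{1-n}-(k-1)^{1-n}|\le (n-1)(k-1)^{-n}$. Since $|y_n|\le M'\rho^{-n}$ for any $\rho<h_0$, the tail is bounded by $\sum_{n\ge3}(n-1)M'(\rho(k-1))^{-n}=O(k^{-3})$ once $k-1>2/\rho$.

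The main (mild) obstacle is exactly this tail bound: making the term-by-term difference rigorous rather than formal. An equivalent route avoids the series: write $r_k-r_{k-1}=\int_{k-1}^{k}\frac{d}{dt}\bigl[t\,y(1/t)\bigr]\,dt$ and expand the derivative $y(1/t)-y'(1/t)/t$ to second order with a uniform remainder. That expansion gives $\lambda-\DP(\lambda)/t^{2}+O(t^{-3})$, and integrating over $[k-1,k]$ gives $\lambda-\DP(\lambda)/(k(k-1))+O(k^{-3})$, since $\int_{k-1}^k t^{-2}\,dt=1/(k(k-1))$. The final statement $r_k-r_{k-1}\to\lambda$ is then immediate.
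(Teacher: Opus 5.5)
Your proof is correct and follows the paper's argument. Both get \eqref{eq:rk} by substituting $h=1/k$ into the branch expansion via $r_k=k\,y(1/k)$, and both get the spacing law by using holomorphy of $y$ to expand beyond second order before differencing, since, as you note, subtracting two $O(k^{-2})$ remainders is too crude. The paper takes only one more Taylor term, $y(h)=\lambda+\CP(\lambda)h+\DP(\lambda)h^{2}+Eh^{3}+O(h^{4})$, and uses $E\bigl(k^{-2}-(k-1)^{-2}\bigr)=O(k^{-3})$; your full-series tail bound is valid but more than needed, and your integral route is exactly the paper's Remark~\ref{rem:mean}.
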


\begin{proof}
Put $h=1/k$ in Theorem~\ref{thm:branch}, admissible for $k>1/h_0$, and use
$r_k=k\,y(1/k)$; this substitution gives \eqref{eq:rk}. Because $y$ is holomorphic at $0$
one may expand one term further,
$y(h)=\lambda+Ch+Dh^{2}+Eh^{3}+O(h^{4})$ for some $E$, whence
$r_k=k\lambda+C+D/k+E/k^{2}+O(k^{-3})$. Subtracting the analogous expansion at
$k-1$ and using $k^{-2}-(k-1)^{-2}=O(k^{-3})$ leaves
$r_k-r_{k-1}=\lambda+D\bigl(\tfrac1k-\tfrac1{k-1}\bigr)+O(k^{-3})$. Since
$\tfrac1k-\tfrac1{k-1}=-\tfrac1{k(k-1)}$, the result follows.
\end{proof}

The additional local expansion is essential. The classical statement
$r_k/k\to\lambda$ alone permits an $o(k)$ error whose successive differences
need not converge, so it does not by itself yield a spacing law.

\subsection{Continuous order and root velocity}

\begin{corollary}\label{cor:continuous}
Under the hypotheses of Theorem~\ref{thm:branch}, for all sufficiently large
$\alpha$ the corresponding reduced Riemann--Liouville root branch satisfies
\begin{equation}\label{eq:ralpha}
  r(\alpha)=\alpha\lambda+\CP(\lambda)+\frac{\DP(\lambda)}{\alpha}
  +O(\alpha^{-2}),
  \qquad
  r'(\alpha)=\lambda-\frac{\DP(\lambda)}{\alpha^{2}}+O(\alpha^{-3}).
\end{equation}
If $P$ is real and $\lambda>0$, then $r(\alpha)>0$ for all sufficiently large
$\alpha$, so the branch is a genuine positive zero of $J^{\alpha}P$.
\end{corollary}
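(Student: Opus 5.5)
The plan is to sample the holomorphic branch of Theorem~\ref{thm:branch} at $h=1/\alpha$, exactly as Corollary~\ref{cor:discrete} sampled it at $h=1/k$. The sampling relation \eqref{eq:sampling} gives $r(\alpha)=\alpha\,y(1/\alpha)$, and $1/\alpha<h_0$ once $\alpha>1/h_0$. Since $y$ is holomorphic on $|h|<h_0$, we may write
\[
y(h)=\lambda+\CP(\lambda)h+\DP(\lambda)h^2+E h^3+O(h^4).
\]
Multiplying by $\alpha$ at $h=1/\alpha$ gives the first formula in \eqref{eq:ralpha} at once, with the $E/\alpha^2$ term absorbed into $O(\alpha^{-2})$.

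For the velocity, I will not differentiate the asymptotic expansion termwise. That step needs justification, and it is the only mildly delicate point. Instead, define $g(h)=y(h)/h$, so that $r(\alpha)=g(1/\alpha)$ and
\[
r'(\alpha)=-\alpha^{-2}g'(1/\alpha).
\]
Set $\phi(h)=y(h)-\lambda-\CP h-\DP h^2$. This $\phi$ is holomorphic with $\phi(h)=Eh^3+O(h^4)$. Hence $\phi'(h)=3Eh^2+O(h^3)$, because the derivative of a convergent power series is again convergent. Then
\[
g'(h)=-\lambda h^{-2}+\DP+\frac{d}{dh}\bigl(\phi(h)/h\bigr),\qquad \frac{d}{dh}\bigl(\phi(h)/h\bigr)=2Eh+O(h^2).
\]
Substituting $h=1/\alpha$ gives
\[
r'(\alpha)=\lambda-\DP/\alpha^2-2E/\alpha^3+O(\alpha^{-4}),
\]
which is the second formula. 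Differentiability of $r$ in $\alpha$ follows from holomorphy of $y$ on the real segment $(0,h_0)$.

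For the positivity claim, take $P$ real and $\lambda>0$. Then $\lambda$ is a real simple zero, so the last sentence of Theorem~\ref{thm:branch} gives $y(h)\in\mathbb{R}$ for real small $h$. Hence $r(\alpha)=\alpha y(1/\alpha)$ is real, and $r(\alpha)=\alpha\lambda+O(1)>0$ for large $\alpha$.

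Finally, $r(\alpha)$ is a zero of $H(\alpha,\cdot)$ on the positive axis. There $x^\alpha>0$ and the literal Riemann--Liouville integral satisfies $J^\alpha P(x)=x^\alpha H(\alpha,x)$ by \eqref{eq:H}. So $r(\alpha)$ is a genuine positive zero of $J^\alpha P$.

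The main obstacle, such as it is, is the derivative step: justifying the $O(\alpha^{-3})$ remainder for $r'$. The reduction to differentiating a convergent power series, through $g(h)=y(h)/h$, handles it.
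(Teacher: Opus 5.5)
Your proposal is correct and follows the paper's proof: you sample the holomorphic branch at $h=1/\alpha$ and differentiate using holomorphy of $y$ at $0$, and your $-\alpha^{-2}g'(1/\alpha)$ is exactly the paper's $y(1/\alpha)-\alpha^{-1}y'(1/\alpha)$, with the remainder written out explicitly. Your use of the reality clause of Theorem~\ref{thm:branch}, to ensure $r(\alpha)$ is real before concluding it is positive, supplies a step the paper leaves implicit.
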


\begin{proof}
Set $h=1/\alpha$ and use $r(\alpha)=\alpha\,y(1/\alpha)$. Since $y$ is
holomorphic at $0$, differentiation is legitimate and
$r'(\alpha)=y(1/\alpha)-\alpha^{-1}y'(1/\alpha)
=\lambda-\DP(\lambda)\alpha^{-2}+O(\alpha^{-3})$. If $\lambda>0$ then
$r(\alpha)/\alpha\to\lambda$, so $r(\alpha)$ is eventually positive; on the
positive axis $x^{\alpha}\neq0$, so a zero of the reduced factor is a zero of
$J^{\alpha}P$.
\end{proof}

\begin{remark}[Spacing as an exact mean velocity]\label{rem:mean}
Since $r_k=r(k)$ on the selected branch by \eqref{eq:sampling}, the fundamental
theorem of calculus gives
\[
  r_k-r_{k-1}=\int_{k-1}^{k}r'(\alpha)\,d\alpha.
\]
The discrete spacing is therefore exactly the mean continuous-order velocity
over one unit interval, and integrating \eqref{eq:ralpha} reproduces the term
$-\DP(\lambda)/(k(k-1))$ of Corollary~\ref{cor:discrete}.
\end{remark}

\begin{remark}[Stability of the branches]\label{rem:nochaos}
Away from multiple zeros, the branches depend stably on the coefficients.
A small change $\delta a$ of the coefficients of $P$ moves a simple zero
$\lambda$ of $\LP$ by $\delta\lambda=O(|\delta a|/|\LP'(\lambda)|)$; this bound
degrades as $\lambda$ approaches another zero. By Theorem~\ref{thm:branch} the
change moves $r_k$ by $k\,\delta\lambda+O(|\delta a|)$, so
$|\delta r_k|\le c\,k\,|\delta a|$ for all large $k$, with $c$ depending on $P$
and $\lambda$. The growth is at most linear, and there is no exponential
sensitivity. Since $J^{a}J^{b}=J^{a+b}$ for $a,b>0$, any sequence of positive orders
$\alpha_1,\alpha_2,\dots$, however irregular, only samples the single branch
$r(\alpha)$ at the cumulative orders $\alpha_1+\dots+\alpha_n$, as long as the
branch stays simple, so that it is continued without ambiguity. Finally, each factor $c_j(1/\alpha)$ with $j\ge1$ increases strictly to $1$ as $\alpha\to\infty$, so the
scaled coefficients approach those of $\LP$ monotonically and
$\sum_j|a_j|\,j!\,(1-c_j(1/\alpha))$ decreases strictly when $P$ is nonconstant.
\end{remark}

\section{Geometry of the asymptotic branch}\label{sec:geometry}

\subsection{Asymptotic line and polar form}
Let $\ell_\lambda=\{\CP(\lambda)+t\lambda:\ t\in\mathbb{R}\}$. The classical
scaling theorem locates the branch only up to direction; the constant term
sharpens this to a statement about position.

\begin{corollary}[Asymptotic line and polar form]\label{cor:geometry}
Under the hypotheses of Theorem~\ref{thm:branch} with $\lambda\neq0$,
$\dist(r(\alpha),\ell_\lambda)=O(\alpha^{-1})$. Moreover the perpendicular
distance from $r(\alpha)$ to the line through the origin in direction $\lambda$
tends to $|\operatorname{Im}(\CP(\lambda)\overline{\lambda})|/|\lambda|$, and
for sufficiently large $\alpha$ the nearest point lies on the ray
$\{t\lambda:\ t>0\}$, so the same limit holds for the distance to that ray.
Writing $r(\alpha)=\rho(\alpha)e^{i\theta(\alpha)}$,
\begin{equation}\label{eq:polar}
  \rho(\alpha)=\alpha|\lambda|
  +|\lambda|\operatorname{Re}\!\Bigl(\frac{\CP(\lambda)}{\lambda}\Bigr)
  +O(\alpha^{-1}),
  \qquad
  \theta(\alpha)=\arg\lambda
  +\frac{\operatorname{Im}\bigl(\CP(\lambda)/\lambda\bigr)}{\alpha}
  +O(\alpha^{-2}),
\end{equation}
and consequently
$\theta'(\alpha)=-\operatorname{Im}(\CP(\lambda)/\lambda)/\alpha^{2}
+O(\alpha^{-3})$.
\end{corollary}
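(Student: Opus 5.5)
The plan is to work entirely from the expansion of Corollary~\ref{cor:continuous}, $r(\alpha)=\alpha\lambda+\CP(\lambda)+\DP(\lambda)/\alpha+O(\alpha^{-2})$, which holds because $r(\alpha)=\alpha\,y(1/\alpha)$ with $y$ holomorphic at $0$ (Theorem~\ref{thm:branch}). Abbreviate $C=\CP(\lambda)$ and set $w(\alpha)=r(\alpha)/\lambda=\alpha+C/\lambda+O(\alpha^{-1})$. Dividing by $\lambda$ is a rotation composed with a dilation by $1/|\lambda|$. It carries the line through the origin in direction $\lambda$ to the real axis, and it carries $\ell_\lambda$ to the horizontal line $\operatorname{Im} z=\operatorname{Im}(C/\lambda)$. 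All distance statements can therefore be read off from $w$ and then multiplied back by $|\lambda|$.

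\emph{Line statements.} Take the point $C+\alpha\lambda\in\ell_\lambda$, i.e.\ the parameter $t=\alpha$. Then $|r(\alpha)-(C+\alpha\lambda)|=|\DP(\lambda)/\alpha+O(\alpha^{-2})|=O(\alpha^{-1})$, which bounds $\dist(r(\alpha),\ell_\lambda)$. For the line through the origin, the perpendicular distance is
\[
|\operatorname{Im}(r\bar\lambda)|/|\lambda|=|\lambda|\,|\operatorname{Im} w| .
\]
Since $\operatorname{Im} w=\operatorname{Im}(C/\lambda)+O(\alpha^{-1})$, this distance tends to $|\lambda|\,|\operatorname{Im}(C/\lambda)|=|\operatorname{Im}(C\bar\lambda)|/|\lambda|$. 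The foot of the perpendicular is $\lambda\operatorname{Re} w$, and $\operatorname{Re} w=\alpha+O(1)>0$ for large $\alpha$. So the foot lies on the open ray $\{t\lambda:t>0\}$, and the distance to the ray equals the distance to the line.

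\emph{Polar form.} Write $r=\lambda\alpha\,(1+u)$ with $u=C/(\lambda\alpha)+O(\alpha^{-2})$. For large $\alpha$ we have $|u|<1$, so a principal $\log(1+u)$ is available. It gives
\[
\log\rho=\log(\alpha|\lambda|)+\operatorname{Re}\log(1+u),\qquad \theta=\arg\lambda+\operatorname{Im}\log(1+u),
\]
where $\theta$ is taken on the continuous branch near $\arg\lambda$, which is legitimate because $r(\alpha)/\alpha\to\lambda\neq0$. Since $\log(1+u)=u+O(u^2)=C/(\lambda\alpha)+O(\alpha^{-2})$, we get $\theta=\arg\lambda+\operatorname{Im}(C/\lambda)/\alpha+O(\alpha^{-2})$. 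We also get $\rho=\alpha|\lambda|\exp\bigl(\operatorname{Re}(C/\lambda)/\alpha+O(\alpha^{-2})\bigr)=\alpha|\lambda|+|\lambda|\operatorname{Re}(C/\lambda)+O(\alpha^{-1})$, which is \eqref{eq:polar}.

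\emph{The derivative $\theta'$.} This is the only step needing care, since one cannot differentiate an $O$-term directly. To handle it, note that $\theta(\alpha)-\arg\lambda=\operatorname{Im}\log\bigl(y(h)/\lambda\bigr)$ with $h=1/\alpha$. This is a smooth function of $h$ near $0$, namely the imaginary part of a function holomorphic in $h$. Its Taylor expansion is $\operatorname{Im}(C/\lambda)h+O(h^2)$, and that expansion may be differentiated termwise. Hence
\[
\frac{d\theta}{d\alpha}=-h^2\,\frac{d}{dh}\bigl[\operatorname{Im}(C/\lambda)h+O(h^2)\bigr]=-\operatorname{Im}(C/\lambda)\alpha^{-2}+O(\alpha^{-3}).
\]
Equivalently, one can compute $\theta'=\operatorname{Im}(r'/r)$ and combine it with the expansion of $r'$ in Corollary~\ref{cor:continuous}.
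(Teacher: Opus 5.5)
Your proposal is correct and takes essentially the same route as the paper. Both arguments anchor at the point $\alpha\lambda+\CP(\lambda)$ on $\ell_\lambda$, and both use $|\operatorname{Im}(z\overline{\lambda})|/|\lambda|$ with a projection parameter $\alpha+O(1)$ for the line and ray statements. Both then expand $r(\alpha)/(\alpha\lambda)=1+\CP(\lambda)/(\lambda\alpha)+O(\alpha^{-2})$ for the polar form and rely on analyticity of the branch to differentiate the angle expansion. Your identity $\theta(\alpha)-\arg\lambda=\operatorname{Im}\log\bigl(y(1/\alpha)/\lambda\bigr)$ simply spells out the analyticity step that the paper states in a single clause.
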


\begin{proof}
The point $\alpha\lambda+\CP(\lambda)$ lies on $\ell_\lambda$, while
Corollary~\ref{cor:continuous} gives
$r(\alpha)-\alpha\lambda-\CP(\lambda)=\DP(\lambda)/\alpha+O(\alpha^{-2})$; this difference proves the first estimate. For $z\in\mathbb{C}$ the distance from the line
$\mathbb{R}\lambda$ is $|\operatorname{Im}(z\overline{\lambda})|/|\lambda|$.
Substituting the expansion of $r(\alpha)$ annihilates the leading term, since
$\operatorname{Im}(\alpha\lambda\overline{\lambda})
=\operatorname{Im}(\alpha|\lambda|^{2})=0$, and leaves the stated limit. The
orthogonal projection parameter is $\alpha+O(1)$, hence positive for large
$\alpha$. For the polar form,
$r(\alpha)/\alpha=\lambda\bigl[1+\CP(\lambda)/(\lambda\alpha)
+O(\alpha^{-2})\bigr]$; using $|1+w|=1+\operatorname{Re}w+O(|w|^{2})$ and
$\arg(1+w)=\operatorname{Im}w+O(|w|^{2})$ gives \eqref{eq:polar}, and
analyticity of the branch justifies differentiating the second expansion.
\end{proof}

For real $P$ and real $\lambda$ the constant $\CP(\lambda)$ is real, the
limiting distance vanishes, and the statement degenerates: the branch lies on
the real axis. The content of Corollary~\ref{cor:geometry} is therefore about
the complex branches, which Section~\ref{subsec:complex} shows can arise
even from real-rooted $P$. If $a_0\neq0$, Vieta's formula for $\LP$ gives
\[
  \prod_{\LP(\lambda)=0}|\lambda|=\frac{|a_0|}{m!\,|a_m|},
\]
counted with multiplicity, so the geometric mean of the asymptotic radial
slopes depends only on the constant and leading coefficients of $P$.

\begin{remark}[Areas are again integrals]\label{rem:area}
Since $J^{\alpha+1}P(x)=\int_0^{x}J^{\alpha}P(t)\,dt$, areas under the curves
studied here are themselves curves of the same kind. In the scaled variable,
$J^{\alpha}P(\alpha y)=(\alpha y)^{\alpha}Q_{1/\alpha}(y)/\Gamma(\alpha+1)$, and
for $Y>0$
\[
  \int_0^{Y}y^{\alpha}Q_{1/\alpha}(y)\,dy
  =\frac{Y^{\alpha+1}}{\alpha+1}\,Q_{1/(\alpha+1)}\Bigl(\frac{\alpha Y}{\alpha+1}\Bigr).
\]
The signed area therefore vanishes exactly when $\alpha Y$ is a nonzero zero of
$J^{\alpha+1}P$, and its zeros move according to Theorem~\ref{thm:branch} at
order $\alpha+1$. 
\end{remark}

\subsection{The exceptional transformed zero \texorpdfstring{$\lambda=0$}{lambda=0}}
Theorem~\ref{thm:branch} is stated without the restriction $\lambda\neq0$, and
at $\lambda=0$ its conclusion is stronger than a generic perturbation
expansion: the branch does not move at all.

\begin{proposition}[The origin is pinned]\label{prop:pinned}
Let $s=\operatorname{ord}_0P$, so that $P(x)=x^{s}\widetilde P(x)$ with
$\widetilde P(0)\neq0$. Then $\operatorname{ord}_0\LP=s$, and for every
$\alpha>0$ the reduced polynomial $H(\alpha,\cdot)$ has a zero of multiplicity
exactly $s$ at the origin. Equivalently $I_kP$ has a zero of multiplicity
$k+s$ there, and exactly $m-s$ nonzero zeros, matching the $m-s$ nonzero zeros
of $\LP$.
\end{proposition}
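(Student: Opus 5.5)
The argument is a direct coefficient computation; no perturbation theory is needed. Write $P(x)=\sum_{j=s}^{m}a_jx^j$ with $a_s\neq0$. By \eqref{eq:Ldef}, $\LP(y)=\sum_{j=s}^{m}a_j\,j!\,y^j$. Its lowest coefficient $a_s\,s!$ is nonzero, so $\operatorname{ord}_0\LP=s$.

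For the reduced polynomial I will use \eqref{eq:H}:
\[
H(\alpha,x)=\sum_{j=s}^{m}a_j\,\frac{j!}{\Gamma(j+\alpha+1)}\,x^{j}.
\]
Every term has $j\ge s$, so $x^s$ divides $H(\alpha,\cdot)$. The coefficient of $x^s$ is $a_s\,s!/\Gamma(s+\alpha+1)$. This is nonzero for every $\alpha>0$, since $\Gamma$ has no zeros and is finite at $s+\alpha+1>1$. Hence the zero of $H(\alpha,\cdot)$ at the origin has multiplicity exactly $s$. The quotient $H(\alpha,x)/x^s$ is a polynomial of degree exactly $m-s$. Indeed, its leading coefficient $a_m\,m!/\Gamma(m+\alpha+1)$ is nonzero and its constant term is nonzero. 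It therefore has exactly $m-s$ zeros, all nonzero, counted with multiplicity.

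At integer order $\alpha=k$, I will combine this with $I_kP(x)=x^kH(k,x)$ from \eqref{eq:Ik} and \eqref{eq:H}. The multiplicity at the origin is then $k+s$. The remaining zeros are the $m-s$ nonzero zeros of $H(k,\cdot)$, a count matching the $m-s$ nonzero zeros of $\LP$. To make the correspondence with the branches explicit, I will note that $Q_h(y)=y^s\widetilde Q_h(y)$ with $\widetilde Q_h(0)=a_s\,s!\,c_s(h)\neq0$ for $|h|<1/m$. Rouché's theorem, as in Theorem~\ref{thm:branch}, then matches the nonzero zeros of $\widetilde Q_h$ with those of $\widetilde{L}$, where $\widetilde{L}(y)=\LP(y)/y^s$; this is also exactly the classical scaling theorem.

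There is no real obstacle. The only point needing care is the nonvanishing of $1/\Gamma(s+\alpha+1)$ and $1/\Gamma(m+\alpha+1)$ for $\alpha>0$. That nonvanishing is what makes both the multiplicity and the degree exact rather than mere lower and upper bounds.
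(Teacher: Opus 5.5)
Your proof is correct and is essentially the paper's own argument: the weights $j!$ and $j!/\Gamma(j+\alpha+1)$ are finite and nonzero for $\alpha>0$, so exactly the same initial coefficients vanish as in $P$. The count of $m-s$ nonzero zeros then follows from $I_kP(x)=x^kH(k,x)$ and a degree count. The closing Rouch\'e remark goes beyond what the proposition asserts, which is only that the counts match, not that the zeros correspond branch by branch, so it can be dropped without loss.
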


\begin{proof}
The coefficient map $a_j\mapsto j!\,a_j$ multiplies each coefficient by a
nonzero constant, so it does not change which initial coefficients vanish and
$\operatorname{ord}_0\LP=\operatorname{ord}_0P=s$. In $H(\alpha,x)$ the
coefficient of $x^{j}$ is $a_j\,j!/\Gamma(j+\alpha+1)$, and
$\Gamma(j+\alpha+1)$ is finite and nonzero for $\alpha>0$, so exactly the same
first $s$ coefficients vanish. The integer statement follows from \eqref{eq:Ik},
where each term carries the factor $x^{k}$. Counting degrees, $I_kP$ has degree
$m+k$ and a zero of multiplicity $k+s$ at the origin, leaving $m-s$ nonzero
zeros.
\end{proof}

\begin{corollary}\label{cor:pinned}
If $0$ is a simple zero of $\LP$, that is, if $a_0=0$ and $a_1\neq0$, then the
branch supplied by Theorem~\ref{thm:branch} is $y(h)\equiv0$, and consequently
$\CP(0)=\DP(0)=0$.
\end{corollary}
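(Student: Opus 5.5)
The plan is to combine the uniqueness clause of Theorem~\ref{thm:branch} with the pinning statement of Proposition~\ref{prop:pinned}. Suppose $a_0=0$ and $a_1\neq0$. Then $\LP(y)=a_1y+2a_2y^2+\cdots$ has a simple zero at $\lambda=0$, so Theorem~\ref{thm:branch} applies there. It supplies $\delta>0$ and $h_0>0$ such that, for $|h|<h_0$, the polynomial $Q_h$ has exactly one zero $y(h)$ in the disc $|y|<\delta$.

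Next I would check that $y=0$ is a zero of $Q_h$ for every $h$ in that disc. This is immediate from \eqref{eq:cjh}: the constant coefficient of $Q_h$ is $a_0\,0!\,c_0(h)=a_0=0$. (For real positive $h=1/\alpha$ this is also the content of Proposition~\ref{prop:pinned} with $s=1$, but the direct coefficient observation works for all complex $|h|<h_0$.) Since $0$ lies in the disc $|y|<\delta$ and the zero there is unique, $y(h)\equiv0$ on $|h|<h_0$.

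Finally, the Taylor coefficients of the identically zero holomorphic function all vanish. Comparing with \eqref{eq:yexp} at $\lambda=0$ gives $\CP(0)=\DP(0)=0$.

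As a consistency check, which is not needed for the argument, the formulas \eqref{eq:CD} give the same values. The series $\RP(y)=\sum j(j+1)j!\,a_jy^j$ and $\SP$ have no constant term, so $\RP(0)=\SP(0)=0$. Hence $\CP(0)=0$, and then $\DP(0)=-\SP(0)/(2\LP'(0))=0$.

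There is no real obstacle here. The only point needing care is that uniqueness in Theorem~\ref{thm:branch} holds for all small complex $h$, which is what lets the constant branch $0$ be identified with $y(h)$.
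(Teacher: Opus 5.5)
Your proof is correct and follows the paper's argument. You combine the uniqueness of the zero near $\lambda=0$ from Theorem~\ref{thm:branch} with the vanishing constant coefficient of every $Q_h$, and then cross-check via $\RP(0)=\SP(0)=0$ in \eqref{eq:CD}. The paper cites Proposition~\ref{prop:pinned} where you read $a_0\,c_0(h)=0$ directly off \eqref{eq:cjh}; this has the small advantage of covering all complex $|h|<h_0$ without appealing to the identity theorem.
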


\begin{proof}
By Proposition~\ref{prop:pinned} with $s=1$, every $Q_h$ has $y=0$ as a simple
zero, so the unique zero near $0$ furnished by Theorem~\ref{thm:branch} is $0$
itself. The vanishing of the coefficients is then immediate, and may also be
read off from \eqref{eq:CD}: $R_P(0)=0$ and $S_P(0)=0$ because the $j=0$ terms
carry the factors $j(j+1)$ and $\sigma_1(j)^{2}+\sigma_2(j)$, while
$\LP'(0)=a_1\neq0$.
\end{proof}

For example, $P(x)=x^{2}-x$ gives $\LP(x)=2x^{2}-x=x(2x-1)$, whereas
\[
  I_kP(x)=x^{k+1}\Bigl(\frac{2x}{(k+2)!}-\frac{1}{(k+1)!}\Bigr)
\]
has the single nonzero zero $r_k=(k+2)/2$, corresponding to $\lambda=1/2$. The
transformed zero $\lambda=0$ carries no moving trajectory at all.

For a fixed polynomial, then, the origin is rigid rather than a second
asymptotic regime, and the hypothesis $\lambda\neq0$ in the corollaries of
Section~\ref{sec:consequences} omits no moving branch. A moving near-zero
regime appears only when $P$ itself varies, which is the subject of
Section~\ref{subsec:degeneration}.
\section{Binomials and the Factorial Fraction}\label{sec:binomials}

Let $A,B>0$ and $a>b>-1$, and consider $f(x)=Ax^{a}-Bx^{b}$, whose positive
zero is $\rho=(B/A)^{1/(a-b)}$. The restriction $b>-1$ is the natural one for the
integral formulation: in \eqref{eq:RL} the factor $t^{b}$ must be integrable at
the lower endpoint, and $t^{b}\in L^{1}_{\mathrm{loc}}$ near $0$ exactly when
$b>-1$. At $b=-1$ the termwise formula fails for an elementary reason as well, namely that
$\int x^{-1}dx=\log x$ is not a power of $x$, so exponents at or below $-1$ fall
outside the integral framework used here. Riemann--Liouville integration of order
$\alpha>0$ acts termwise on $x^{a}$ and $x^{b}$, so the reduced positive zero
is
\begin{equation}\label{eq:binroot}
  r(\alpha)=\rho F(a,b)
  \left[\frac{\Gamma(\alpha+a+1)}{\Gamma(\alpha+b+1)}\right]^{1/(a-b)},
  \qquad
  F(a,b)=\left[\frac{\Gamma(b+1)}{\Gamma(a+1)}\right]^{1/(a-b)}.
\end{equation}
For integer exponents $n_1>n_2\ge0$ this becomes
$F(n_1,n_2)=(n_2!/n_1!)^{1/(n_1-n_2)}$, the reciprocal geometric mean of
$n_2+1,\dots,n_1$. We call $F(n_1,n_2)$ the \emph{Factorial Fraction} of the
exponent pair.

\begin{theorem}[Binomial spacing]\label{thm:binomial}
For fixed $a>b>-1$,
\[
  r(\alpha)=\rho F(a,b)\Bigl(\alpha+\frac{a+b+1}{2}+O(\alpha^{-1})\Bigr).
\]
Consequently, at integer order,
$\lim_{k\to\infty}(r_k-r_{k-1})=\rho F(a,b)$.
\end{theorem}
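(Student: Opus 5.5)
The plan is to start from the exact formula \eqref{eq:binroot} and expand the gamma ratio for large $\alpha$. With $s=a-b>0$, I would use the standard asymptotic expansion of a ratio of gamma functions \cite[\S5.11]{DLMF}:
\[
  \frac{\Gamma(\alpha+a+1)}{\Gamma(\alpha+b+1)}
  =\alpha^{a-b}\Bigl(1+\frac{(a-b)(a+b+1)}{2\alpha}+O(\alpha^{-2})\Bigr).
\]
This is the case $z=\alpha$, $\mathrm{a}=a+1$, $\mathrm{b}=b+1$ of $\Gamma(z+\mathrm{a})/\Gamma(z+\mathrm{b})\sim z^{\mathrm{a}-\mathrm{b}}\bigl(1+(\mathrm{a}-\mathrm{b})(\mathrm{a}+\mathrm{b}-1)/(2z)+\cdots\bigr)$, and it holds because $a,b$ are fixed and $\alpha\to+\infty$ along the reals. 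If a self-contained derivation is preferred, I would take logarithms and use Stirling's series $\log\Gamma(z+c)=(z+c-\tfrac12)\log z-z+\tfrac12\log 2\pi+O(1/z)$ for fixed $c$. Subtracting the two instances gives $(a-b)\log\alpha+\bigl[(a+1)(a)-(b+1)b\bigr]/(2\alpha)+O(\alpha^{-2})$, where the bracket is obtained from the next term $c(c-1)/(2z)$ of the expansion. Since $a(a+1)-b(b+1)=(a-b)(a+b+1)$, this gives the displayed formula.

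Next I would raise both sides to the power $1/(a-b)$. Using $(1+u)^{1/s}=1+u/s+O(u^2)$ with $u=O(\alpha^{-1})$, the bracket in \eqref{eq:binroot} becomes
\[
  \alpha\Bigl(1+\frac{a+b+1}{2\alpha}+O(\alpha^{-2})\Bigr)=\alpha+\frac{a+b+1}{2}+O(\alpha^{-1}).
\]
Multiplying by $\rho F(a,b)$ then yields the first claim.

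For the spacing, the $O(\alpha^{-1})$ remainder alone does not control differences. Its difference at $k$ and $k-1$ is still $O(k^{-1})$, which tends to $0$. So $r_k-r_{k-1}=\rho F(a,b)\bigl(1+O(k^{-1})\bigr)\to\rho F(a,b)$ follows directly from the expansion. This is because a remainder that tends to zero has successive differences that tend to zero, and no further term is needed for the limit. If a rate were wanted, I would observe that $\alpha\mapsto r(\alpha)$ is real-analytic for $\alpha>\max(-a-1,-b-1)$. The log-gamma expansion can be differentiated term by term, which gives $r'(\alpha)=\rho F(a,b)(1+O(\alpha^{-2}))$, and then Remark~\ref{rem:mean} applies.

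The only real obstacle is getting the $1/\alpha$ coefficient of the gamma ratio correct. I would verify it against the polynomial case. For integer $a=n_1$ and $b=n_2$ the ratio is $\prod_{\ell=n_2+1}^{n_1}(\alpha+\ell)$, whose geometric mean is $\alpha$ plus the average of $n_2+1,\dots,n_1$, namely $\alpha+(n_1+n_2+1)/2+O(\alpha^{-1})$. This agrees with the formula. As a further check, it is consistent with $\CP(\lambda)/\lambda$ from Remark~\ref{rem:compactC} applied to $\LP=n_1!A\,y^{n_1}-n_2!B\,y^{n_2}$. There $\lambda^2\LP''/(2\lambda\LP')$ evaluates to $(n_1+n_2-1)/2$, so $\CP/\lambda=(n_1+n_2+1)/2$.
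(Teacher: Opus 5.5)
Your proposal is correct and follows the paper's proof. Both expand the gamma ratio by the standard large-$\alpha$ formula and raise it to the power $1/(a-b)$; the spacing limit is then immediate, because the $O(\alpha^{-1})$ remainder has successive differences tending to zero.

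One harmless slip occurs in your optional Stirling derivation. The $1/z$ term of $\log\Gamma(z+c)$ is $(c^{2}-c+\frac16)/(2z)$, not $c(c-1)/(2z)$. The $c$-independent part cancels in the difference, so your coefficient $(a-b)(a+b+1)/2$ is unaffected.
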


\begin{proof}
The standard gamma-ratio expansion \cite[\S5.11]{DLMF} is
\[
  \frac{\Gamma(\alpha+a+1)}{\Gamma(\alpha+b+1)}
  =\alpha^{a-b}\Bigl(1+\frac{(a-b)(a+b+1)}{2\alpha}+O(\alpha^{-2})\Bigr).
\]
Raising to the power $1/(a-b)$ and expanding the binomial series gives
$\alpha+(a+b+1)/2+O(\alpha^{-1})$.
\end{proof}

For integer exponents Theorem~\ref{thm:branch} reproduces the same constant
term, which is a nontrivial check on the formula for $\CP$. Indeed, at any
nonzero transformed zero $\lambda$ of
$\LP(y)=An_1!\,y^{n_1}-Bn_2!\,y^{n_2}$ we have
$An_1!\lambda^{n_1}=Bn_2!\lambda^{n_2}=:S$, whence
\[
  \LP'(\lambda)=\frac{S(n_1-n_2)}{\lambda},
  \qquad
  \RP(\lambda)=S\bigl[n_1(n_1+1)-n_2(n_2+1)\bigr],
\]
and since $n_1(n_1+1)-n_2(n_2+1)=(n_1-n_2)(n_1+n_2+1)$ this gives
$\CP(\lambda)=\lambda(n_1+n_2+1)/2$. On the positive branch
$\lambda=\rho F(n_1,n_2)$, so this matches Theorem~\ref{thm:binomial} exactly.

There is also an exact geometric statement, valid at every order rather than
asymptotically. Put $d=n_1-n_2$. Since
$H(\alpha,x)=\frac{An_1!}{\Gamma(n_1+\alpha+1)}x^{n_1}
-\frac{Bn_2!}{\Gamma(n_2+\alpha+1)}x^{n_2}$, the nonzero reduced zeros satisfy
\begin{equation}\label{eq:polygon}
  x^{d}=\frac{Bn_2!}{An_1!}\cdot
  \frac{\Gamma(\alpha+n_1+1)}{\Gamma(\alpha+n_2+1)}.
\end{equation}
The right-hand side is a positive real number for every $\alpha>0$. Hence for
every positive order, integer or fractional, the $d$ nonzero reduced zeros are
exactly the points of a regular root-of-unity configuration centered at the
origin, a regular $d$-gon when $d\ge3$, with fixed
arguments $2\pi q/d$ and a common radius that increases with $\alpha$. The
asymptotic radial speed is $\rho F(n_1,n_2)$. Root-of-unity configurations
of this kind appear already in \cite[\S6, Example~1]{BorweinChenDilcher1995},
where $L$ applied to $\sum_{j=0}^{n}z^{j}/j!$ is computed to be
$(z^{n+1}-1)/(z-1)$, and an exact homothety for a regular-polygon family in the
fractional-derivative direction is obtained in \cite{WaniSheikh2026}.

\section{Low-degree checks}\label{sec:lowdegree}

\subsection{A quadratic with an exact root formula}\label{subsec:quadexact}
For $P(x)=x^{2}-x-1$ we have $\LP(y)=2y^{2}-y-1$, whose positive zero is
$\lambda=1$, and $\RP(y)=12y^{2}-2y$, $\SP(y)=28y^{2}-2y$. Since
$\LP'(1)=3$ and $\LP''=4$,
\[
  \CP(1)=\frac{10}{6}=\frac53,
  \qquad
  \DP(1)=\frac{-4(5/3)^{2}+22(5/3)-26}{6}=-\frac{2}{27}.
\]
Directly, $I_kP(x)=-x^{k}/k!-x^{k+1}/(k+1)!+2x^{k+2}/(k+2)!$, so after division
by $x^{k}$ and multiplication by $(k+2)!$ the nonzero zeros satisfy
$2x^{2}-(k+2)x-(k+1)(k+2)=0$. The positive zero is
\[
  r_k=\frac{(k+2)+\sqrt{9k^{2}+28k+20}}{4}
  =k+\frac53-\frac{2}{27k}+O(k^{-2}),
\]
in exact agreement with Theorem~\ref{thm:branch} and
Corollary~\ref{cor:discrete}.

\subsection{A complex branch arising from a real-rooted polynomial}
\label{subsec:complex}
Let $P(x)=x^{2}-3x+2=(x-1)(x-2)$. Then $\LP(y)=2y^{2}-3y+2$, with zeros
$\lambda_{\pm}=(3\pm i\sqrt7)/4$, both of modulus $1$. Substituting into the
formulas of Theorem~\ref{thm:branch} gives
\[
  \CP(\lambda_{\pm})=\frac32\pm\frac{3\sqrt7}{14}\,i,
  \qquad
  \DP(\lambda_{\pm})=\mp\frac{8\sqrt7}{49}\,i.
\]
Thus a polynomial with two real zeros can generate, at sufficiently large
integration order, a conjugate pair of moving nonzero zeros.
Figure~\ref{fig:branch} shows the trajectory. The two real zeros start at
$1$ and $2$ when $\alpha=0$ and approach one another. They meet at $x_0=12/7$
when $\alpha=\alpha_0=2/7$ and thereafter separate into a conjugate pair. The
upper branch converges not to the ray through the origin in direction
$\lambda_{+}$ but to the parallel line
$\ell_{\lambda_{+}}=\{\CP(\lambda_{+})+t\lambda_{+}\}$. Its limiting
perpendicular distance from the ray is
$|\operatorname{Im}(\CP(\lambda_{+})\overline{\lambda_{+}})|/|\lambda_{+}|
=3\sqrt7/14=0.56695\dots$, exactly as Corollary~\ref{cor:geometry} predicts. Section~\ref{sec:collision} derives the
collision order.

\begin{figure}[!ht]
\centering
\begin{tikzpicture}
\begin{axis}[
  width=12cm, height=8.4cm,
  xmin=-0.35, xmax=6.8, ymin=-0.95, ymax=5.0,
  axis lines=middle,
  xlabel={$\operatorname{Re}$}, ylabel={$\operatorname{Im}$},
  xlabel style={at={(ticklabel* cs:1.0)},anchor=north west},
  ylabel style={at={(ticklabel* cs:1.0)},anchor=south east},
  tick label style={font=\footnotesize},
  legend style={at={(0.03,0.97)},anchor=north west,font=\footnotesize,
                draw=none,fill=white,fill opacity=0.85,text opacity=1},
  clip mode=individual,
]
\addplot[dotted,gray,thick] coordinates {(0.0000,0.0000) (5.2500,4.6301)};
\addlegendentry{ray $\mathbb{R}_{+}\lambda_{+}$}
\addplot[dashed,black,thick] coordinates {(0.3000,-0.4914) (6.1500,4.6679)};
\addlegendentry{line $\ell_{\lambda_{+}}$}
\addplot[black!45,line width=1.6pt] coordinates {(2.0000,0.0000) (2.0000,0.0000) (1.9999,0.0000) (1.9997,0.0000) (1.9995,0.0000) (1.9991,0.0000) (1.9988,0.0000) (1.9983,0.0000) (1.9978,0.0000) (1.9971,0.0000) (1.9964,0.0000) (1.9956,0.0000) (1.9947,0.0000) (1.9937,0.0000) (1.9927,0.0000) (1.9915,0.0000) (1.9902,0.0000) (1.9888,0.0000) (1.9873,0.0000) (1.9857,0.0000) (1.9839,0.0000) (1.9820,0.0000) (1.9800,0.0000) (1.9779,0.0000) (1.9756,0.0000) (1.9731,0.0000) (1.9705,0.0000) (1.9677,0.0000) (1.9647,0.0000) (1.9615,0.0000) (1.9581,0.0000) (1.9544,0.0000) (1.9506,0.0000) (1.9464,0.0000) (1.9420,0.0000) (1.9372,0.0000) (1.9321,0.0000) (1.9267,0.0000) (1.9208,0.0000) (1.9144,0.0000) (1.9075,0.0000) (1.8999,0.0000) (1.8917,0.0000) (1.8825,0.0000) (1.8723,0.0000) (1.8608,0.0000) (1.8476,0.0000) (1.8319,0.0000) (1.8124,0.0000) (1.7855,0.0000) (1.7143,0.0000)};
\addlegendentry{real branches, $\alpha\le\alpha_0$}
\addplot[black!45,line width=1.6pt] coordinates {(1.0000,0.0000) (1.0086,0.0000) (1.0173,0.0000) (1.0260,0.0000) (1.0348,0.0000) (1.0437,0.0000) (1.0527,0.0000) (1.0617,0.0000) (1.0708,0.0000) (1.0800,0.0000) (1.0893,0.0000) (1.0987,0.0000) (1.1081,0.0000) (1.1177,0.0000) (1.1273,0.0000) (1.1371,0.0000) (1.1469,0.0000) (1.1569,0.0000) (1.1670,0.0000) (1.1772,0.0000) (1.1875,0.0000) (1.1980,0.0000) (1.2085,0.0000) (1.2193,0.0000) (1.2301,0.0000) (1.2412,0.0000) (1.2524,0.0000) (1.2638,0.0000) (1.2753,0.0000) (1.2871,0.0000) (1.2991,0.0000) (1.3113,0.0000) (1.3237,0.0000) (1.3364,0.0000) (1.3495,0.0000) (1.3628,0.0000) (1.3764,0.0000) (1.3905,0.0000) (1.4050,0.0000) (1.4199,0.0000) (1.4354,0.0000) (1.4515,0.0000) (1.4683,0.0000) (1.4860,0.0000) (1.5048,0.0000) (1.5249,0.0000) (1.5467,0.0000) (1.5710,0.0000) (1.5991,0.0000) (1.6345,0.0000) (1.7143,0.0000)};
\addplot[black,line width=1.6pt] coordinates {(1.7143,0.0000) (1.7429,0.1968) (1.7714,0.2806) (1.8000,0.3464) (1.8286,0.4032) (1.8571,0.4543) (1.8857,0.5014) (1.9143,0.5457) (1.9429,0.5877) (1.9714,0.6279) (2.0000,0.6667) (2.0286,0.7042) (2.0571,0.7407) (2.0857,0.7762) (2.1143,0.8110) (2.1429,0.8452) (2.1714,0.8787) (2.2000,0.9117) (2.2286,0.9442) (2.2571,0.9762) (2.2857,1.0079) (2.3143,1.0392) (2.3429,1.0702) (2.3714,1.1009) (2.4000,1.1314) (2.4286,1.1616) (2.4571,1.1915) (2.4857,1.2212) (2.5143,1.2508) (2.5429,1.2801) (2.5714,1.3093) (2.6000,1.3383) (2.6286,1.3672) (2.6571,1.3959) (2.6857,1.4245) (2.7143,1.4530) (2.7429,1.4813) (2.7714,1.5095) (2.8000,1.5377) (2.8286,1.5657) (2.8571,1.5936) (2.8857,1.6215) (2.9143,1.6492) (2.9429,1.6769) (2.9714,1.7045) (3.0000,1.7321) (3.0286,1.7595) (3.0571,1.7869) (3.0857,1.8142) (3.1143,1.8415) (3.1429,1.8687) (3.1714,1.8959) (3.2000,1.9230) (3.2286,1.9500) (3.2571,1.9770) (3.2857,2.0040) (3.3143,2.0309) (3.3429,2.0577) (3.3714,2.0846) (3.4000,2.1113) (3.4286,2.1381) (3.4571,2.1648) (3.4857,2.1915) (3.5143,2.2181) (3.5429,2.2447) (3.5714,2.2713) (3.6000,2.2978) (3.6286,2.3243) (3.6571,2.3508) (3.6857,2.3773) (3.7143,2.4037) (3.7429,2.4301) (3.7714,2.4565) (3.8000,2.4828) (3.8286,2.5092) (3.8571,2.5355) (3.8857,2.5617) (3.9143,2.5880) (3.9429,2.6142) (3.9714,2.6405) (4.0000,2.6667) (4.0286,2.6928) (4.0571,2.7190) (4.0857,2.7452) (4.1143,2.7713) (4.1429,2.7974) (4.1714,2.8235) (4.2000,2.8496) (4.2286,2.8756) (4.2571,2.9017) (4.2857,2.9277) (4.3143,2.9537) (4.3429,2.9797) (4.3714,3.0057) (4.4000,3.0317) (4.4286,3.0576) (4.4571,3.0836) (4.4857,3.1095) (4.5143,3.1355) (4.5429,3.1614) (4.5714,3.1873) (4.6000,3.2132) (4.6286,3.2390) (4.6571,3.2649) (4.6857,3.2908) (4.7143,3.3166) (4.7429,3.3425) (4.7714,3.3683) (4.8000,3.3941) (4.8286,3.4199) (4.8571,3.4457) (4.8857,3.4715) (4.9143,3.4973) (4.9429,3.5231) (4.9714,3.5488) (5.0000,3.5746) (5.0286,3.6004) (5.0571,3.6261) (5.0857,3.6518) (5.1143,3.6776) (5.1429,3.7033) (5.1714,3.7290) (5.2000,3.7547) (5.2286,3.7804) (5.2571,3.8061) (5.2857,3.8318) (5.3143,3.8575) (5.3429,3.8831) (5.3714,3.9088) (5.4000,3.9345) (5.4286,3.9601) (5.4571,3.9858) (5.4857,4.0114) (5.5143,4.0371) (5.5429,4.0627) (5.5714,4.0883) (5.6000,4.1139) (5.6286,4.1395) (5.6571,4.1652) (5.6857,4.1908) (5.7143,4.2164) (5.7429,4.2420) (5.7714,4.2676) (5.8000,4.2931) (5.8286,4.3187) (5.8571,4.3443) (5.8857,4.3699) (5.9143,4.3955) (5.9429,4.4210) (5.9714,4.4466) (6.0000,4.4721)};
\addlegendentry{complex branch, $\alpha>\alpha_0$}
\addplot[only marks,mark=*,mark size=1.9pt,black,mark options={fill=white}] coordinates {(2.2500,0.9682) (3.0000,1.7321) (4.5000,3.1225) (6.0000,4.4721)};
\addplot[only marks,mark=square*,mark size=2.2pt,black] coordinates {(1.0000,0.0000) (2.0000,0.0000)};
\addplot[only marks,mark=*,mark size=2.6pt,black] coordinates {(1.7143,0.0000)};
\node[anchor=north,font=\footnotesize,align=center] at (axis cs:1.5,-0.42)
  {$\alpha=0$};
\node[anchor=west,font=\footnotesize] at (axis cs:2.15,-0.62)
  {collision: $\alpha_0=\tfrac27$, $x_0=\tfrac{12}{7}$};
\draw[->,gray,shorten >=2pt] (axis cs:2.10,-0.60) -- (axis cs:1.75,-0.06);
\end{axis}
\end{tikzpicture}
\caption{Root trajectory for $P(x)=(x-1)(x-2)$ under $J^{\alpha}$. The two real
zeros collide at $x_0=12/7$ when $\alpha=\alpha_0=2/7$ and leave the real axis
as a conjugate pair; only the upper branch is drawn, the lower being its mirror
image. The branch is asymptotic to the line
$\ell_{\lambda_{+}}=\{\CP(\lambda_{+})+t\lambda_{+}\}$ (dashed), not to the ray
$\mathbb{R}_{+}\lambda_{+}$ through the origin (dotted); the two are parallel
and separated by $3\sqrt7/14\approx0.567$. Gray: the two real branches for
$0\le\alpha\le\alpha_0$; black: the complex branch for $\alpha>\alpha_0$. Filled
squares mark the zeros $1$ and $2$ of $P$ at $\alpha=0$, the filled circle marks
the collision, and open circles mark the complex branch at
$\alpha=1,2,4,6$.}
\label{fig:branch}
\end{figure}

\subsection{Higher degree}
For $P(x)=ax^{3}+bx^{2}+cx+d$ the transform is $\LP(x)=6ax^{3}+2bx^{2}+cx+d$,
and a quartic transforms to $24ax^{4}+6bx^{3}+2cx^{2}+dx+e$. No radical expression for
$\lambda$ is needed at any degree: it is characterized implicitly by
$\LP(\lambda)=0$, and $\CP(\lambda)$ and $\DP(\lambda)$ are then rational
functions of $\lambda$ with coefficients rational in those of $P$.

\subsection{Exact identities in every degree}\label{subsec:sumrules}
Vieta's formulas give the collective behavior of the nonzero zeros exactly.
Let $a_0a_m\neq0$, let $r_1,\dots,r_m$ be the nonzero zeros of $I_kP$, and let
$e_s$ be their elementary symmetric functions. Since the reduced polynomial has
coefficients $a_j\,j!/(j+k)!$,
\begin{equation}\label{eq:pascal}
  e_s=(-1)^{s}\,\frac{a_{m-s}}{a_m}\,\binom{k+m}{s}\Big/\binom{m}{s},
  \qquad 1\le s\le m,
\end{equation}
a polynomial of degree $s$ in $k$, and the same holds for $J^{\alpha}P$ with the
binomial coefficient read as a polynomial in $\alpha$. For $s=m$ this is the
product $\prod r_i=(-1)^{m}(a_0/a_m)\binom{k+m}{m}$, which for a binomial
$a_0+a_mx^m$ fixes the common radius of the regular polygon of
Section~\ref{sec:binomials}; the factors
$(k+m)(k+m-1)\cdots(k+m-s+1)$ vanish at the negative integers that carry the
universal factor of Proposition~\ref{prop:spurious}.

Comparing \eqref{eq:pascal} for $s=1$ with Theorem~\ref{thm:branch} summed over
the branches gives exact sum rules. If the zeros $\lambda_1,\dots,\lambda_m$ of
$\LP$ are simple, then $e_1=-(a_{m-1}/a_m)(1+k/m)$ has no term in $1/k$, and
\begin{equation}\label{eq:sumrules}
  \sum_i\lambda_i=-\frac{a_{m-1}}{m\,a_m},\qquad
  \sum_i\CP(\lambda_i)=-\frac{a_{m-1}}{a_m},\qquad
  \sum_i\DP(\lambda_i)=0 .
\end{equation}
We checked \eqref{eq:sumrules} on random complex polynomials of degrees three
and four, where the last sum vanishes to within $10^{-15}$. It is a convenient
test of \eqref{eq:CD}, and the higher $e_s$ give further identities of the same
kind.

\section{Structural properties of the transform \texorpdfstring{$L$}{L}}
\label{sec:structural}

The transform has the Laplace representation
\begin{equation}\label{eq:laplace}
  \LP(x)=\int_0^{\infty}e^{-t}P(xt)\,dt,
\end{equation}
valid for every complex $x$, because $\int_0^{\infty}e^{-t}t^{j}\,dt=j!$ and
the sum defining $P$ is finite, so the integral converges absolutely.

Equivalently, since the Laplace transform of $u^{j}$ is $j!/s^{j+1}$ for
$\operatorname{Re}s>0$, one has
$\mathcal{L}[P](s)=\sum_j a_j\,j!\,s^{-(j+1)}$ and hence
\begin{equation}\label{eq:laplacezero}
  s\,\mathcal{L}[P](s)=\LP(1/s),
\end{equation}
so the nonzero zeros of $\LP$ are exactly the reciprocals of the zeros of the
rational function $\sum_ja_j\,j!\,s^{-(j+1)}$. This function is the continuation of
$\mathcal{L}[P]$ from $\operatorname{Re}s>0$ to $s\neq0$. The asymptotic root scale $\lambda$ of
Theorem~\ref{thm:branch} is therefore a reciprocal Laplace zero, and
Proposition~\ref{prop:noncreation} below is the statement that a nonnegative
kernel has no Laplace zero on the positive axis.

\begin{remark}[$L$ as an expectation]\label{rem:expectation}
Since a standard exponential variable $T$ has density $e^{-t}$ on $(0,\infty)$
and moments $\mathbb{E}[T^{j}]=j!$, the representation \eqref{eq:laplace} says
exactly that
\[
  \LP(x)=\mathbb{E}\bigl[P(xT)\bigr],\qquad T\sim\mathrm{Exponential}(1).
\]
The factorial weights are the moments of $T$, and $L$ averages $P$ over an
exponentially distributed rescaling of its argument. The same reading applies at
finite order and explains the limit. If $U_\alpha$ has the $\mathrm{Beta}(1,\alpha)$
density $\alpha(1-u)^{\alpha-1}$ on $[0,1]$, then
$\mathbb{E}[U_\alpha^{\,j}]=j!\,\Gamma(\alpha+1)/\Gamma(j+\alpha+1)$, so the scaled family of Section~\ref{subsec:identification} is
\begin{equation}\label{eq:betarep}
  Q(\alpha,y)=\mathbb{E}\bigl[P(y\,\alpha U_\alpha)\bigr].
\end{equation}
Since $\alpha U_\alpha\Rightarrow T$ as $\alpha\to\infty$, with
$\mathbb{E}[(\alpha U_\alpha)^{j}]\to j!$, the classical scaling limit is the
convergence of these rescaled Beta variables to a standard exponential, and
\eqref{eq:laplace} is its limiting form. This limiting form makes the next proposition immediate, and it suggests replacing $T$ by an arbitrary positive
random variable; see Section~\ref{sec:limitations}.
\end{remark}

\begin{proposition}[Positive-root noncreation]\label{prop:noncreation}
If $P$ is real and has no zero on $(0,\infty)$, then $\LP$ has no zero on
$(0,\infty)$ and has the same constant sign there as $P$.
\end{proposition}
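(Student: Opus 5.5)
The plan is to use the Laplace representation \eqref{eq:laplace}, or equivalently the expectation form of Remark~\ref{rem:expectation}. First, since $P$ is real, continuous, and has no zero on $(0,\infty)$, the intermediate value theorem shows that $P$ has a constant strict sign $\varepsilon\in\{\pm1\}$ on $(0,\infty)$. We may assume $P\not\equiv0$; if $P$ is the zero polynomial the hypothesis still holds vacuously in the sense that $P$ has zeros everywhere, so it is excluded.

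Next, fix $x>0$. The map $t\mapsto xt$ sends $(0,\infty)$ onto itself, so $\varepsilon P(xt)>0$ for every $t>0$. The integrand $e^{-t}\,\varepsilon P(xt)$ is therefore continuous and strictly positive on $(0,\infty)$. It is also absolutely integrable, because $P$ has polynomial growth. Hence
\[
  \varepsilon\,\LP(x)=\int_0^{\infty}e^{-t}\,\varepsilon P(xt)\,dt>0 .
\]
This shows at once that $\LP(x)\neq0$ and that $\LP(x)$ has the sign $\varepsilon$ of $P$ on $(0,\infty)$. Probabilistically, this is the statement $\LP(x)=\mathbb{E}[P(xT)]$ with $xT>0$ almost surely.

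The only point requiring care is strictness. A nonnegative integrand could in principle integrate to zero, but here it is continuous and positive on an interval of positive measure, so the integral is strictly positive. The behavior of $P$ at $t=0$ does not matter: $P(0)$ may vanish, which is harmless since it affects only a single point. There is no genuine obstacle beyond this remark; the statement is immediate from \eqref{eq:laplace}.
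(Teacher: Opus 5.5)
Your proof is correct and is essentially the paper's argument: the intermediate value theorem gives $P$ a fixed sign $\varepsilon$ on $(0,\infty)$, and the Laplace integrand $e^{-t}\varepsilon P(xt)$ is continuous and positive for $t>0$, so $\varepsilon\LP(x)>0$. One small wording point: the zero polynomial is excluded because it violates the hypothesis, not because the hypothesis ``holds vacuously.''
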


\begin{proof}
A continuous real polynomial without a positive zero has a fixed sign
$\varepsilon\in\{\pm1\}$ on $(0,\infty)$. For $x>0$ the integrand
$e^{-t}P(xt)$ then has sign $\varepsilon$ for every $t>0$ and is continuous and
not identically zero, so $\varepsilon\LP(x)>0$.
\end{proof}

Since $j!>0$, the sequences $(a_j)$ and $(j!\,a_j)$ have the same sign pattern,
so $P$ and $\LP$ have the same number of Descartes sign variations and hence
the same Descartes bound and the same parity for the number of positive zeros
\cite[\S10.1]{RahmanSchmeisser2002}. A sharper statement holds, but it requires
care with terminology.

Borwein, Chen, and Dilcher note explicitly that $L$ is a zero-diminishing
linear transformation: for a real polynomial, $\LP$ cannot have more real
zeros, counted with multiplicity, than $P$
\cite[\S6]{BorweinChenDilcher1995}. The property itself is the subject of
\cite{CravenCsordas1980}. We use that result only as a cited structural fact;
it is not needed in the proof of Theorem~\ref{thm:branch}.

The statement should not be confused with preservation of real-rootedness.
Indeed $P(x)=(1+x)^{2}$ has two real zeros, whereas $\LP(x)=1+2x+2x^{2}$ has
discriminant $-4$, so $L$ can strictly reduce the number of real zeros. The
Hermite polynomials show how severe that reduction can be.

\begin{proposition}\label{prop:hermite}
Let $H_n$ denote the (physicists') Hermite polynomials. Then
\begin{equation}\label{eq:hermitetrunc}
  L[H_n](x)=n!\,(2x)^{n}\,e_M\!\Bigl(-\frac{1}{4x^{2}}\Bigr),
  \qquad M=\lfloor n/2\rfloor,
\end{equation}
where $e_M(z)=\sum_{m=0}^{M}z^{m}/m!$ is the truncated exponential. Consequently
$L[H_n]$ has exactly $n\bmod4$ real zeros, counted with multiplicity, although
$H_n$ has $n$.
\end{proposition}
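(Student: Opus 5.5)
The plan is to compute $L[H_n]$ directly from the explicit coefficients of $H_n$. Then I reduce the real-zero count to the classical fact about real zeros of the truncated exponential $e_M$. Start from
\[
  H_n(x)=n!\sum_{m=0}^{M}\frac{(-1)^{m}}{m!\,(n-2m)!}(2x)^{n-2m}.
\]
The transform \eqref{eq:Ldef} multiplies the coefficient of $x^{j}$ by $j!$. Here $j=n-2m$, so the factor $(n-2m)!$ cancels and
\[
  L[H_n](x)=n!\sum_{m=0}^{M}\frac{(-1)^m}{m!}(2x)^{n-2m}.
\]
Factoring out $(2x)^n$ leaves $\sum_m(-1/(4x^2))^m/m!$, which is \eqref{eq:hermitetrunc}. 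This part is routine.

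For the count I would first treat the origin. The lowest power in $L[H_n]$ is $x^{n-2M}$, with nonzero coefficient. So $x=0$ is not a zero when $n$ is even, and is a simple zero when $n$ is odd. For real $x\neq0$, put $z=-1/(4x^2)$. The map $x\mapsto z$ is two-to-one from $\mathbb{R}\setminus\{0\}$ onto $(-\infty,0)$ and has nonzero derivative. Hence each simple negative zero of $e_M$ produces exactly two simple nonzero real zeros $\pm 1/(2\sqrt{-z})$ of $L[H_n]$, and all nonzero real zeros arise this way.

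The key lemma, and the only real obstacle, is the following. For $M$ even, $e_M$ has no real zero. For $M$ odd, $e_M$ has exactly one real zero, which is simple and negative. I would prove it by induction on $M$, using $e_M'=e_{M-1}$ and $e_M-e_M'=z^M/M!$.

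If $M$ is even, $e_M\to+\infty$ at $\pm\infty$, so $e_M$ attains a global minimum at a critical point $z_0$. There $e_M(z_0)=z_0^M/M!\ge0$, with equality only if $z_0=0$. But $e_M(0)=1$, so $e_M>0$ everywhere.

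If $M$ is odd, $M-1$ is even, so $e_M'=e_{M-1}>0$ by the previous case. Thus $e_M$ is strictly increasing with opposite limits at $\pm\infty$, and has exactly one real zero, which is simple. Since $e_M>0$ on $[0,\infty)$, that zero is negative. The base case is $e_0=1$.

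Finally, combine the pieces according to $n \bmod 4$, noting $M=\lfloor n/2\rfloor$:
\begin{itemize}
\item $n=4q$: $M$ even and no zero at the origin, giving $0$ real zeros.
\item $n=4q+1$: $M$ even and a simple zero at the origin, giving $1$.
\item $n=4q+2$: $M$ odd, so one negative zero of $e_M$ gives two real zeros, and the origin is not a zero, giving $2$.
\item $n=4q+3$: $M$ odd, giving $2+1=3$.
\end{itemize}
Since all these zeros are simple, the count with multiplicity is $n\bmod4$. Meanwhile $H_n$ has $n$ real zeros by the standard orthogonal-polynomial fact.
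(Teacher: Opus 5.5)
Your proposal is correct and follows the paper's proof essentially step for step: the same cancellation of $j!$ against $(n-2m)!$, the same reduction of the nonzero real zeros through $z=-1/(4x^{2})$, and the same analysis of $e_M$ via $e_M'=e_{M-1}$ together with the identity $e_M=e_{M-1}+z^{M}/M!$ at a critical point. The only difference is cosmetic. For even $M$ you evaluate $e_M$ at an arbitrary global minimizer rather than at the unique zero of $e_{M-1}$, so your even case does not need the inductive hypothesis.
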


\begin{proof}
In $H_n(x)=\sum_{m}(-1)^{m}\frac{n!}{m!\,(n-2m)!}(2x)^{n-2m}$ the coefficient of
$x^{j}$ is nonzero only for $j\equiv n\pmod 2$, and for $j=n-2m$ equals
$(-1)^{m}n!\,2^{j}/(m!\,j!)$. Multiplying it by $j!$, as $L$ does, cancels the
$j!$ and leaves
$L[H_n](x)=\sum_{m=0}^{M}(-1)^{m}n!\,(2x)^{n-2m}/m!$, which is
\eqref{eq:hermitetrunc}.

The lowest power of $x$ occurring is $x^{n-2M}$, so the origin is a zero of
multiplicity $n\bmod2$. For real $x\neq0$ the quantity $z=-1/(4x^{2})$ runs over
$(-\infty,0)$, and each negative zero $z_0$ of $e_M$ contributes the two real
zeros $x=\pm1/(2\sqrt{-z_0})$. It therefore suffices to know that $e_M$ has no
real zero for $M$ even and exactly one, negative and simple, for $M$ odd. This property of $e_M$ follows by induction from $e_M'=e_{M-1}$. Given $e_{2k}>0$, the function
$e_{2k+1}$ is strictly increasing and of odd degree. It therefore has a unique
real zero $z_0$, which is negative because $e_{2k+1}(0)=1$. Then $e_{2k+2}$
attains its minimum at $z_0$, where its value is
$e_{2k+1}(z_0)+z_0^{2k+2}/(2k+2)!=z_0^{2k+2}/(2k+2)!>0$, so $e_{2k+2}>0$. Since
$M=\lfloor n/2\rfloor$ is even exactly when $n\equiv0,1\pmod4$, the count is
$(n\bmod2)+2\,[\,n\equiv2,3\pmod 4\,]=n\bmod4$.
\end{proof}

By \eqref{eq:hermitetrunc} the transform carries
a polynomial all of whose zeros are real to one whose zeros are those of a
truncated exponential, and by Szeg\H o's theorem \cite{Szego1924} the limit set of the
scaled zeros of $e_M(Mz)$ is the Szeg\H o curve
$\{|z\,e^{1-z}|=1,\ |z|\le1\}$. With $z=-1/(4x^{2})$ and $z=Mw$, the rescaled nonzero zeros $\sqrt M\,x$ of $L[H_n]$ therefore accumulate on the closed curve
$\{\pm\frac12(-w)^{-1/2}:w\text{ on the Szeg\H o curve}\}$. So $L$ moves the zeros
of $H_n$ off the real axis, leaving at most three behind. In particular $L$ is not a
P\'olya--Schur multiplier sequence, as the P\'olya--Schur criterion also shows
directly \cite{PolyaSchur1914,RahmanSchmeisser2002}: the function associated with
$\gamma_j=j!$ is
\[
  \sum_{j\ge0}\gamma_j\frac{x^{j}}{j!}=\sum_{j\ge0}x^{j}=\frac{1}{1-x},
\]
which is not entire. By contrast the inverse coefficient
map $a_j\mapsto a_j/j!$ is a classical multiplier sequence. Indeed,
$\sum_{j\ge0}x^{j}/(j!)^{2}=I_0(2\sqrt{x})$ is entire of order $1/2$. The
substitution $x=-(z/2)^{2}$ turns it into $J_0(z)$, so its zeros lie only at
$x=-j_{0,n}^{2}/4$, with $j_{0,n}$ the positive zeros of $J_0$
\cite[\S10.21]{DLMF}. It therefore lies in the Laguerre--P\'olya class.

The relations between multiplier sequences, zero-diminishing sequences, and
complex zero decreasing sequences require care, and we do not attempt to state
them here. The P\'olya--Schur characterization of multiplier sequences is
classical; broader classification questions, including those for complex zero
decreasing sequences, remain; see Craven and Csordas
\cite{CravenCsordas1980,CravenCsordas1977,CravenCsordas1995,CravenCsordas2004}. None of those classification
questions is needed below.

\section{Collision order and the root-velocity equation}\label{sec:collision}

Theorem~\ref{thm:branch} is a local statement in $h$ and presupposes that the
selected branch stays simple. Section~\ref{subsec:complex} shows that real
branches can become complex, and it is natural to ask at which order this
happens. Throughout this section put
\begin{equation}\label{eq:Galpha}
  G_\alpha(x):=\Gamma(\alpha+m+1)H(\alpha,x)
  =\sum_{j=0}^{m}b_j(\alpha)x^{j},
  \qquad
  b_j(\alpha)=a_j\,j!\prod_{i=j+1}^{m}(\alpha+i),
\end{equation}
which is a polynomial in $x$ of degree $m$ for every $\alpha$. It has the same
zeros as $H(\alpha,\cdot)$ except at $\alpha=-m-1,-m-2,\dots$, where
$H(\alpha,\cdot)$ vanishes identically and $G_\alpha$ is defined by the product
formula. Write $\Delta_\alpha=\disc_x G_\alpha$. We call
$\alpha$ a \emph{collision order} when $G_\alpha$ has a multiple zero, that is,
when $\Delta_\alpha=0$. The definition is algebraic. For generic $P$ it is the
order at which two previously distinct branches meet, but a repeated factor or
other degeneracy can make $\Delta_\alpha$ vanish without that two-sheet picture,
and no claim about the local sheet structure is intended by the word. Note that $\deg_\alpha b_j\le m-j$, with equality when $a_j\neq0$, and that the
coefficient of $\alpha^{m-j}$ in $b_j$ is $a_j\,j!$, the corresponding
coefficient of $\LP$.

\subsection{Exact collision order for quadratics}\label{subsec:quadcoll}
Let $P(x)=a_0+a_1x+a_2x^{2}$ with $a_0a_2\neq0$. Here
$\Gamma(\alpha+3)/\Gamma(\alpha+1)=(\alpha+1)(\alpha+2)$ and
$\Gamma(\alpha+3)/\Gamma(\alpha+2)=\alpha+2$, so
\begin{equation}\label{eq:eqquad}
  G_\alpha(x)=2a_2x^{2}+a_1(\alpha+2)x+a_0(\alpha+1)(\alpha+2).
\end{equation}
The discriminant factors, and the factorization is the whole content of the
quadratic case.

\begin{lemma}\label{lem:discfactor}
For $P$ of degree two,
\begin{equation}\label{eq:discfactor}
  \Delta_\alpha=(\alpha+2)\bigl[\alpha\disc\LP+2\disc P\bigr],
  \qquad \disc P=a_1^{2}-4a_0a_2,\quad \disc\LP=a_1^{2}-8a_0a_2.
\end{equation}
In particular $\Delta_0=4\disc P$ and the leading coefficient of
$\Delta_\alpha$ in $\alpha$ is $\disc\LP$.
\end{lemma}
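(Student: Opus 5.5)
The plan is to compute the discriminant of the quadratic \eqref{eq:eqquad} directly, using the standard formula $B^{2}-4AC$ for $Ax^{2}+Bx+C$. Here $A=2a_2$, $B=a_1(\alpha+2)$, $C=a_0(\alpha+1)(\alpha+2)$. Because $B$ and $C$ both carry the factor $\alpha+2$, I will extract that factor first:
\[
  \Delta_\alpha=a_1^{2}(\alpha+2)^{2}-8a_0a_2(\alpha+1)(\alpha+2)
  =(\alpha+2)\bigl[a_1^{2}(\alpha+2)-8a_0a_2(\alpha+1)\bigr].
\]

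The bracket is linear in $\alpha$, and I will collect it by powers of $\alpha$. The coefficient of $\alpha$ is $a_1^{2}-8a_0a_2$. The constant term is $2a_1^{2}-8a_0a_2=2(a_1^{2}-4a_0a_2)$.

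Next I will identify these two coefficients. By \eqref{eq:Ldef}, $\LP(y)=2a_2y^{2}+a_1y+a_0$, so $\disc\LP=a_1^{2}-8a_0a_2$. Also $\disc P=a_1^{2}-4a_0a_2$ by definition. This gives exactly \eqref{eq:discfactor}.

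The two consequences then follow. Setting $\alpha=0$ gives $\Delta_0=2\cdot2\disc P=4\disc P$. The $\alpha^{2}$ coefficient of the product is $1\cdot\disc\LP$.

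There is no real obstacle here; the only point needing care is the consistent use of the normalization \eqref{eq:Galpha}. The leading coefficient $2a_2=2!\,a_2$ must match the one used in $\disc\LP$. As a sanity check, $G_0=2a_2x^{2}+2a_1x+2a_0=2P$ has discriminant $4\disc P$, which agrees with $\Delta_0$.
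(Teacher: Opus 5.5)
Your proposal is correct and follows essentially the same route as the paper: compute $B^{2}-4AC$ for \eqref{eq:eqquad}, pull out the common factor $\alpha+2$, and read off the affine bracket as $\alpha\disc\LP+2\disc P$. Your check $G_0=2P$ is a nice confirmation of $\Delta_0=4\disc P$.
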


\begin{proof}
From \eqref{eq:eqquad},
$\Delta_\alpha=a_1^{2}(\alpha+2)^{2}-8a_0a_2(\alpha+1)(\alpha+2)
=(\alpha+2)\bigl[a_1^{2}(\alpha+2)-8a_0a_2(\alpha+1)\bigr]$, and the bracket
equals $\alpha(a_1^{2}-8a_0a_2)+2(a_1^{2}-4a_0a_2)$. Evaluating at $\alpha=0$
gives $2\cdot2\disc P$.
\end{proof}

Thus, after removal of the factor $\alpha+2$ (positive for $\alpha>-1$), the
discriminant is an affine function of $\alpha$. Its intercept is $2\disc P$ and
its slope is $\disc\LP$. It therefore interpolates linearly between the
discriminant of $P$, recovered at $\alpha=0$, and that of $\LP$, which governs
the behavior as $\alpha\to\infty$. The collision is simply where that
line crosses zero.

The next lemma shows that only one of the two conceivable sign patterns can
occur.

\begin{lemma}\label{lem:signs}
For real $P$ of degree two, $\disc P<0$ implies $\disc\LP<\disc P<0$. Consequently
$\disc P$ and $\disc\LP$ have opposite signs if and only if
$\disc P>0>\disc\LP$.
\end{lemma}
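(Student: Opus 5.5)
The plan is to compute the difference of the two discriminants directly and read off its sign. From \eqref{eq:discfactor},
\[
  \disc\LP-\disc P=(a_1^{2}-8a_0a_2)-(a_1^{2}-4a_0a_2)=-4a_0a_2 .
\]
So everything reduces to the sign of $a_0a_2$.

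The first step uses the hypothesis $\disc P<0$. It says $a_1^{2}<4a_0a_2$. Since $a_1^{2}\ge0$ for real $P$, this forces $a_0a_2>0$, and strictly so. Hence $\disc\LP-\disc P=-4a_0a_2<0$, which gives $\disc\LP<\disc P<0$. This proves the first assertion.

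For the consequence, note that $\disc P$ and $\disc\LP$ can have opposite signs in only two ways. The pattern $\disc P<0<\disc\LP$ is excluded by the first assertion, since $\disc P<0$ forces $\disc\LP<0$. The only remaining strict pattern is $\disc P>0>\disc\LP$. Conversely, that pattern is obviously a case of opposite signs.

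One point needs care: whether "opposite signs" allows a zero. I will read it as strictly opposite signs, which is what the collision-order application needs. With that reading both directions of the equivalence hold.

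There is no real obstacle here; the whole argument is the one-line identity $\disc\LP=\disc P-4a_0a_2$. The only care needed is to note that the standing assumption $a_0a_2\neq0$ is not even required for the first claim, because $\disc P<0$ already makes $a_0a_2$ positive.
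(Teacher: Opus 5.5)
Your proposal is correct and is essentially the paper's proof: from $\disc P<0$ you get $a_0a_2>0$, and then $\disc\LP=\disc P-4a_0a_2<\disc P<0$. Your case split for the ``consequently'' clause, which excludes $\disc P<0<\disc\LP$ by the first assertion, simply spells out the step the paper leaves implicit.
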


\begin{proof}
If $\disc P<0$ then $a_1^{2}<4a_0a_2$, so $a_0a_2>0$ and hence
$8a_0a_2>4a_0a_2$. Therefore
$\disc\LP=a_1^{2}-8a_0a_2<a_1^{2}-4a_0a_2=\disc P<0$.
\end{proof}

\begin{theorem}[Quadratic collision order]\label{thm:collision}
Let $P(x)=a_0+a_1x+a_2x^{2}$ with $a_0a_2\neq0$ and $\disc\LP\neq0$. Then, apart from the fixed value $\alpha=-2$ at which the factor $\alpha+2$ of
\eqref{eq:discfactor} vanishes, $G_\alpha$ has a repeated zero for exactly one
value of $\alpha$, namely
\begin{equation}\label{eq:alpha0}
  \alpha_0=-\,\frac{2\disc P}{\disc\LP},
\end{equation}
and the repeated zero is
\begin{equation}\label{eq:x0}
  x_0=\frac{2a_0a_1}{\disc\LP}.
\end{equation}
If $P$ is real, the collision occurs at positive integration order if and only
if $\disc P>0>\disc\LP$, that is, if and only if $P$ has two distinct real
zeros while $\LP$ has none. In terms of $t=a_1^{2}/(a_0a_2)$, this happens
exactly when $4<t<8$, and then
\begin{equation}\label{eq:alpha0t}
  \alpha_0=\frac{2(t-4)}{8-t},
\end{equation}
an increasing bijection from $(4,8)$ onto $(0,\infty)$.
\end{theorem}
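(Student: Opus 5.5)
The plan is to read everything off Lemma~\ref{lem:discfactor}, so that the only remaining work is locating the double root and doing some sign bookkeeping. Since $a_2\neq0$, $G_\alpha$ has degree exactly two for every $\alpha$. It therefore has a repeated zero precisely when $\Delta_\alpha=0$. By \eqref{eq:discfactor} this happens either at $\alpha=-2$ or where the affine bracket $\alpha\disc\LP+2\disc P$ vanishes. Because $\disc\LP\neq0$, the bracket has exactly one root, which is \eqref{eq:alpha0}.

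For the repeated zero, the double root of $2a_2x^2+a_1(\alpha+2)x+a_0(\alpha+1)(\alpha+2)$ is
\[
x_0=-\frac{a_1(\alpha_0+2)}{4a_2}.
\]
From \eqref{eq:alpha0},
\[
\alpha_0+2=\frac{2(\disc\LP-\disc P)}{\disc\LP}=\frac{-8a_0a_2}{\disc\LP},
\]
so $x_0=2a_0a_1/\disc\LP$, which is \eqref{eq:x0}. I would add a remark on the case $\alpha_0=-2$. This occurs when $\disc\LP=\disc P$, i.e.\ $a_0a_2=0$, which is excluded. So the collision value is genuinely distinct from $-2$ and the count ``exactly one'' is unambiguous.

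For real $P$, the condition $\alpha_0>0$ says that $\disc P$ and $\disc\LP$ have strictly opposite signs. Here $\disc P\neq0$ is needed, since otherwise $\alpha_0=0$. By Lemma~\ref{lem:signs}, opposite signs occur exactly when $\disc P>0>\disc\LP$. This is the real-zero interpretation: $P$ has two distinct real zeros and $\LP$ has none. To pass to $t$, divide by $a_0a_2$:
\[
\disc P=a_0a_2(t-4),\qquad \disc\LP=a_0a_2(t-8).
\]
This gives \eqref{eq:alpha0t} directly, with the factor $a_0a_2$ cancelling. The sign conditions then have to be analysed according to the sign of $a_0a_2$.

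The main obstacle, mild as it is, is this sign case. If $a_0a_2<0$ then $t<0$, and $\disc\LP=a_1^2-8a_0a_2>0$, so no positive collision occurs. The condition $4<t<8$ also fails, so the two descriptions agree. If $a_0a_2>0$, the conditions become $t>4$ and $t<8$. Finally, $\alpha_0(t)=2(t-4)/(8-t)$ is a Möbius map with derivative $8/(8-t)^2>0$. It tends to $0$ as $t\to4^+$ and to $+\infty$ as $t\to8^-$, which gives the increasing bijection onto $(0,\infty)$.
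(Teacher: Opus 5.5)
Your proposal is correct and follows essentially the same route as the paper: you read the collision off the affine bracket in Lemma~\ref{lem:discfactor}, locate $x_0=-a_1(\alpha_0+2)/(4a_2)$ via $\alpha_0+2=-8a_0a_2/\disc\LP$, invoke Lemma~\ref{lem:signs} for the sign pattern, and rewrite in terms of $t$. Your explicit checks that $\alpha_0\neq-2$ and of the case $a_0a_2<0$ are small additions that the paper leaves implicit; in that case one has $t\le0$ rather than $t<0$, with $t=0$ when $a_1=0$, and this does not affect the conclusion.
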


\begin{proof}
By Lemma~\ref{lem:discfactor} and $\alpha+2\neq0$ for $\alpha>-1$, collisions
are the zeros of $\alpha\disc\LP+2\disc P$, which is a genuinely affine
function since $\disc\LP\neq0$; its unique zero is \eqref{eq:alpha0}. At a
repeated zero, $x_0=-b_1(\alpha_0)/(2b_2)=-a_1(\alpha_0+2)/(4a_2)$. Using
\eqref{eq:alpha0},
$\alpha_0+2=(-2\disc P+2\disc\LP)/\disc\LP=-8a_0a_2/\disc\LP$, hence
$x_0=-a_1\cdot(-8a_0a_2)/(4a_2\disc\LP)=2a_0a_1/\disc\LP$, which is
\eqref{eq:x0}. Positivity of $\alpha_0$ requires $\disc P$ and $\disc\LP$ to
have opposite signs, and Lemma~\ref{lem:signs} excludes the pattern
$\disc P<0<\disc\LP$. Finally, dividing numerator and denominator of
\eqref{eq:alpha0} by $a_0a_2$ gives
$\alpha_0=-2(t-4)/(t-8)=2(t-4)/(8-t)$, and $\disc P>0>\disc\LP$ is exactly
$4a_0a_2<a_1^{2}<8a_0a_2$ with $a_0a_2>0$, that is, $4<t<8$. On that interval
$d\alpha_0/dt=8/(8-t)^{2}>0$, with limits $0$ and $+\infty$ at the endpoints.
\end{proof}

\begin{remark}[Relation to the derivative direction]\label{rem:cps}
Under the change of order $q=-\alpha$, the value \eqref{eq:alpha0} is the
collision order found by Caparatta, Pauli, and Saidak
\cite[Cor.~3.8]{CaparattaPauliSaidak2025} for fractional derivatives of a
monic quadratic $x^{2}+c_1x+c_0$. Their value is
$q_0=1-c_1^{2}/(8c_0-c_1^{2})$, and with $c_1=a_1/a_2$ and $c_0=a_0/a_2$ it
equals $2\disc P/\disc\LP$. What
Theorem~\ref{thm:collision} adds is the expression through the two
discriminants, the criterion for a positive integration order, and the
place of the formula in the general collision theory of
Section~\ref{subsec:gencoll}.
\end{remark}

\begin{corollary}\label{cor:endpoint}
Let $P$ be a real quadratic with $a_0a_2\neq0$ and real zeros. Then the zeros of
$G_\alpha$ are real for every $\alpha>0$ if and only if $\disc\LP\ge0$, that is,
if and only if $\LP$ is itself real-rooted.
\end{corollary}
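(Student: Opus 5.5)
The plan is to read the corollary directly off Lemma~\ref{lem:discfactor}. For a real quadratic, the zeros of $G_\alpha$ are real exactly when $\Delta_\alpha\ge0$. By \eqref{eq:discfactor}, $\Delta_\alpha=(\alpha+2)\,f(\alpha)$, where $f(\alpha)=\alpha\disc\LP+2\disc P$. For $\alpha>0$ the factor $\alpha+2$ is positive, so the claim is equivalent to
\[
f(\alpha)\ge0 \text{ for all } \alpha>0 \iff \disc\LP\ge0 .
\]
The hypothesis that $P$ has real zeros gives $\disc P\ge0$.

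First, the direction $\disc\LP\ge0\Rightarrow$ real zeros. With $\disc P\ge0$ and $\disc\LP\ge0$, the function $f$ is a sum of two nonnegative terms on $\alpha>0$. Hence $\Delta_\alpha\ge0$, and $G_\alpha$ has real zeros; a double real zero is still real.

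Second, the converse. Suppose $\disc\LP<0$. Then $f$ is affine with negative slope, so $f(\alpha)<0$ for every $\alpha>\max(0,\alpha_0)$, where $\alpha_0=-2\disc P/\disc\LP\ge0$ is the value from Theorem~\ref{thm:collision}. For such $\alpha$ we get $\Delta_\alpha<0$, so $G_\alpha$ has a non-real conjugate pair of zeros. Note that $2a_2\neq0$ keeps $G_\alpha$ of degree two. Therefore the zeros are not real for all $\alpha>0$.

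The only delicate point is the boundary case $\disc P=0$ in the converse. There $\alpha_0=0$, and the zeros become complex immediately for $\alpha>0$. This agrees with the statement and needs no separate argument, since $f(\alpha)=\alpha\disc\LP<0$ for all $\alpha>0$. The equivalence of $\disc\LP\ge0$ with real-rootedness of $\LP$ is simply the quadratic discriminant criterion applied to $\LP=2a_2y^2+a_1y+a_0$, whose leading coefficient $2a_2$ is nonzero. I do not expect any real obstacle: the substance lies entirely in the factorization of Lemma~\ref{lem:discfactor}.
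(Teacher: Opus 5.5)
Your proof is correct and is essentially the paper's argument: both read the sign of $\Delta_\alpha$ for $\alpha>0$ off the factorization \eqref{eq:discfactor}, using $\alpha+2>0$ and $\disc P\ge0$, and both observe that the boundary case $\disc P=0$ needs no separate treatment. Your appeal to $\alpha_0$ from Theorem~\ref{thm:collision} is harmless, since $\disc\LP<0$ supplies the nonvanishing hypothesis that theorem requires.
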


\begin{proof}
We argue directly from \eqref{eq:discfactor}, which requires no hypothesis on
either discriminant. Real-rootedness of $P$ is $\disc P\ge0$, and for
$\alpha>0$ the factor $\alpha+2$ is positive, so the sign of $\Delta_\alpha$ is
that of $\alpha\disc\LP+2\disc P$. If $\disc\LP\ge0$ this is nonnegative for
every $\alpha>0$, so the zeros stay real. If $\disc\LP<0$ it is negative for
$\alpha>-2\disc P/\disc\LP$, so the zeros are nonreal from some order onward.
The argument covers $\disc P=0$, where $P$ has a repeated zero and the
threshold is $\alpha=0$.
\end{proof}

In other words the endpoint of the flow controls the whole path: in degree two
the zeros survive every order exactly when they survive the limit. Explicitly
the condition is $a_1^{2}\ge8a_0a_2$, to be compared with $a_1^{2}\ge4a_0a_2$
for $P$ itself.

The endpoint criterion holds in every degree.

\begin{proposition}\label{prop:endpointall}
Let $P$ be a real polynomial of degree $m$. The zeros of $J^\alpha P$ other than
$0$ are real for every $\alpha>0$ if and only if $\LP$ is real-rooted, and
already if they are real for a sequence $\alpha_n\to\infty$. When $\LP$ is
real-rooted, these zeros are moreover simple for every $\alpha>0$, even if
$\LP$ has multiple zeros; if also $a_0\neq0$, then the polynomial $\Phi_P$ of
Proposition~\ref{prop:spurious} satisfies $\Phi_P(\alpha)>0$ for every
$\alpha>0$.
\end{proposition}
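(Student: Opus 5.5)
The plan is to treat the passage from $\LP$ to the reduced polynomial at order $\alpha$ as a coefficient multiplier, and to show that this multiplier preserves real-rootedness. By \eqref{eq:cjh} and Section~\ref{subsec:identification}, $Q(\alpha,\cdot)$ is obtained from $\LP$ by multiplying the coefficient of $y^{j}$ by $c_j(1/\alpha)=\Gamma(\alpha+1)\alpha^{j}/\Gamma(\alpha+j+1)$. The factor $\Gamma(\alpha+1)\alpha^{j}$ is a positive constant times a dilation, which does not affect real-rootedness. So it suffices to know that $\gamma_j=1/\Gamma(j+\alpha+1)$ is a P\'olya--Schur multiplier sequence for $\alpha>0$. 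Its generating function is
\[
\sum_{j\ge0}\frac{x^{j}}{j!\,\Gamma(j+\alpha+1)}=x^{-\alpha/2}I_\alpha(2\sqrt x).
\]
This function is entire of order $1/2$ with nonnegative coefficients. Since $J_\alpha$ has only real zeros for $\alpha>-1$ \cite[\S10.21]{DLMF}, its zeros lie on $(-\infty,0)$, so it lies in the Laguerre--P\'olya class. The P\'olya--Schur criterion \cite{PolyaSchur1914,RahmanSchmeisser2002}, the same one used after Proposition~\ref{prop:hermite} for $\alpha=0$, then shows that $(\gamma_j)$ is a multiplier sequence. This gives the direction ``$\LP$ real-rooted $\Rightarrow$ all reduced zeros real for $\alpha>0$''. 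Proposition~\ref{prop:pinned} identifies the zero at the origin, whose multiplicity $s$ does not move, so the remaining $m-s$ real zeros are exactly the nonzero zeros.

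For the converse, suppose the nonzero zeros are real along some $\alpha_n\to\infty$. I would rerun the Rouch\'e argument of Theorem~\ref{thm:branch} with $h=1/\alpha_n$, which uses only the local uniform convergence $Q_h\to\LP$. This shows that the $m-s$ nonzero scaled zeros $x/\alpha_n$ converge as a multiset to the nonzero zeros of $\LP$, including multiple zeros, by counting in small discs. Limits of real numbers are real, so $\LP$ is real-rooted. This also yields the ``already for a sequence'' clause, and with the previous paragraph it gives the equivalence for every $\alpha>0$.

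Simplicity is the main obstacle, since real-rootedness alone permits a double real zero. I would factor the multiplier as
\[
\frac{1}{\Gamma(j+\alpha+1)}=\frac{\Gamma(j+\beta+1)}{\Gamma(j+\alpha+1)}\cdot\frac{1}{\Gamma(j+\beta+1)},\qquad 0<\beta<\alpha .
\]
The second factor keeps the order-$\beta$ polynomial real-rooted. The first factor is again a multiplier sequence: by the semigroup $J^{\alpha-\beta}J^{\beta}=J^{\alpha}$ it is the order-$(\alpha-\beta)$ Riemann--Liouville action in the Laguerre normalization, whose generating function is a confluent hypergeometric ${}_1F_1(\beta+1;\alpha+1;x)$ up to constants. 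Moreover it is not of the trivial form $c\rho^{j}$. I would then invoke the Craven--Csordas result that such a nontrivial multiplier sequence sends a real-rooted polynomial to one whose nonzero zeros are simple \cite{CravenCsordas1977,CravenCsordas1995}. If that citation does not fit exactly, the fallback is a direct argument. A double nonzero zero $x_*$ at order $\alpha$ would force $G_\alpha$ and $\partial_xG_\alpha$ to vanish together. I would then try to obtain a contradiction from the gamma-average representation of Remark~\ref{rem:ffree}, which expresses $G_\alpha$ as a strictly smoothing average of a real-rooted polynomial of lower order. Alternatively, the finite free convolution $\LP\boxtimes_mK_{m,\alpha}$ with $K_{m,\alpha}$ having simple positive zeros could be used, since finite free multiplicative convolution with a simple positive-rooted factor is known to keep the result simple-rooted.

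Finally, for $a_0\neq0$ all $m$ zeros of $G_\alpha$ are nonzero, real and simple, so $\Delta_\alpha>0$ for $\alpha>0$. The polynomial $\Phi_P$ of Proposition~\ref{prop:spurious} is $\Delta_\alpha$ divided by the universal Laguerre-type factor, a product of powers of $(\alpha+i)$ with $i\ge1$ up to a positive constant. That factor is positive for $\alpha>0$, so $\Phi_P(\alpha)>0$. I would double-check the sign of the constant in that factor against the quadratic case, Lemma~\ref{lem:discfactor}, where the factor is $\alpha+2$ and the remaining term $\alpha\disc\LP+2\disc P$ is positive exactly as claimed.
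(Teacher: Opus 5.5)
Your real-rootedness step (the Bessel generating function of $1/\Gamma(j+\alpha+1)$ together with the P\'olya--Schur criterion) matches the paper. So does the converse by Rouch\'e/Hurwitz, and so does the deduction of $\Phi_P(\alpha)>0$ from $\Delta_\alpha>0$ and the positivity of $\prod_{i=2}^{m}(\alpha+i)^{i-1}$. The gap is the simplicity step.

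\emph{Your splitting goes in the wrong direction.} For $0\le\beta<\alpha$ the sequence $\gamma_j=\Gamma(j+\beta+1)/\Gamma(j+\alpha+1)$ is \emph{not} a multiplier sequence.
\begin{itemize}
\item Since $\frac{d^{2}}{dj^{2}}\log\gamma_j=\psi'(j+\beta+1)-\psi'(j+\alpha+1)>0$, the sequence is strictly log-convex. It therefore violates the Tur\'an inequality $\gamma_1^{2}\ge\gamma_0\gamma_2$, which is forced by real-rootedness of the image of $(1+y)^{2}$.
\item Concretely, for $\beta=1$, $\alpha=2$ the map $a_j\mapsto a_j/(j+2)$ sends $(1+y)^{2}$ to $\tfrac12+\tfrac23y+\tfrac14y^{2}$, whose discriminant is $-\tfrac1{18}$.
\item Equivalently, ${}_1F_1(\beta+1;\alpha+1;x)$ is positive on $\mathbb{R}$, of order one, and not of the form $ce^{\sigma x}$. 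It therefore has nonreal zeros and lies outside the Laguerre--P\'olya class.
\end{itemize}
The semigroup identity holds for the unreduced functions, but it does not make this reduced multiplier preserve real-rootedness.

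\emph{The result you attribute to Craven and Csordas is false as stated.} The sequence $\gamma_j=j+1$, i.e.\ $f\mapsto f+yf'$, is a nontrivial multiplier sequence, yet it sends $(y-1)^{3}$ to $(y-1)^{2}(4y-1)$, which has a double zero.

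\emph{The fallbacks do not close the gap.} The Beta average linking orders $\beta<\alpha$ is exactly the non-multiplier above. Moreover, in Remark~\ref{rem:ffree} it is $\LP$ that is the gamma average of $Q_{1/\alpha}$, not the reverse. The finite-free simplicity claim is unreferenced and not evidently valid when $\LP$ has zeros of both signs.

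\emph{The fix is to split upward by an integer $N\ge m$,} that is, to take $\beta=\alpha+N$, so that the first factor becomes $(j+\alpha+1)_N$, a product of Laguerre multipliers. In operator form:
\begin{itemize}
\item With $b=\alpha+1$, $T_b[y^{j}]=y^{j}/(b)_j$ and $(b)_j(b+j)_N=(b)_N(b+N)_j$, one gets $T_b=\prod_{l=0}^{N-1}\bigl(1+\theta/(b+l)\bigr)T_{b+N}$, where $\theta=y\,d/dy$. Also $Q_{1/\alpha}(y)=T_{\alpha+1}[\LP](\alpha y)$.
\item Your first step shows that $T_{b+N}$ preserves real-rootedness, and each remaining factor is a multiple of $f\mapsto cf+yf'$ with $c>0$.
\end{itemize}
You then need the following lemma. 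Let $f$ be real-rooted of degree $m$.
\begin{itemize}
\item At each nonzero zero of $f$ of multiplicity $d$, $cf+yf'$ has a zero of multiplicity exactly $d-1$, and it keeps the order of vanishing at $0$.
\item By Rolle applied to $|y|^{c}f$ on each half-line, it has a further zero between $0$ and the nearest nonzero zero on each side, and between consecutive distinct nonzero zeros. This completes the count of $m$.
\item These further zeros are zeros of $c/y+f'/f$, whose derivative $-c/y^{2}-\sum_r(y-r)^{-2}$ is negative (the sum running over the zeros of $f$ with multiplicity). So they are simple.
\end{itemize}
Each factor thus lowers every nonzero multiplicity by one and creates only simple zeros. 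After $N\ge m$ factors, every nonzero zero is simple.
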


\begin{proof}
The nonzero zeros of $J^\alpha P$ are $\alpha$ times the nonzero zeros of $Q_{1/\alpha}=\sum_jb_jc_j(1/\alpha)y^{j}$, $b_j=j!a_j$. Fix $\alpha>0$, and let $(x)_j=x(x+1)\cdots(x+j-1)$ denote the rising
factorial, with $(x)_0=1$. The sequence $c_j(1/\alpha)=\alpha^{j}/(\alpha+1)_j$
is a multiplier sequence in the
sense of P\'olya and Schur \cite{PolyaSchur1914,CravenCsordas2004}, because its generating function
$\sum_jc_j(1/\alpha)z^{j}/j!={}_0F_1(;\alpha+1;\alpha z)$ is, up to a factor, a
Bessel function of order $\alpha>-1$. That function is entire of order $\frac12$
with only negative zeros, hence in the Laguerre--P\'olya class. Equivalently, its
Jensen polynomials are the rescaled Laguerre polynomials
$n!\,L_n^{(\alpha)}(-\alpha z)/(\alpha+1)_n$, whose zeros are negative. So
$\LP$ real-rooted implies $Q_{1/\alpha}$ real-rooted. Conversely the
coefficients of $Q_{1/\alpha}$ tend to those of $\LP$, with nonzero leading
coefficient, so by Hurwitz's theorem a limit of real-rooted polynomials of degree
$m$ is real-rooted.

For the simplicity statement, let $\theta=y\,d/dy$, and for $b>0$ define $T_b$ by
$T_b[y^{j}]=y^{j}/(b)_j$. Then $Q_{1/\alpha}(y)=T_{\alpha+1}[\LP](\alpha y)$. Since
$(b)_j(b+j)_N=(b)_N(b+N)_j$, for every $N\ge m$
\[
  T_b=\prod_{l=0}^{N-1}\Bigl(1+\frac{\theta}{b+l}\Bigr)\,T_{b+N}.
\]
By the first part, $T_{b+N}$ preserves real-rootedness. Each factor sends $f$ to
a multiple of $cf+yf'$ with $c>0$. If $f$ is real-rooted of degree $m$, then
so is $cf+yf'$. Indeed, at a nonzero zero of multiplicity $d$ it has a zero of
multiplicity exactly $d-1$, and it keeps the order of vanishing at $0$. By
Rolle's theorem applied to $|y|^{c}f$ on each half-line, it also has a zero
between $0$ and the nearest nonzero zero on each side and between consecutive
distinct nonzero zeros; this completes the count. These further zeros are the
zeros of $c/y+f'/f$, whose derivative $-c/y^{2}-\sum_r(y-r)^{-2}$ is negative,
so they are simple. Each factor therefore lowers the multiplicity of every
multiple nonzero zero by one and creates only simple ones, and after $N\ge m$
factors every nonzero zero is simple. If $a_0\neq0$, all $m$ zeros of the real
polynomial $G_\alpha$ are then real, nonzero and simple, so
$\Delta_\alpha>0$; the universal factor of \eqref{eq:spurious} is positive for
$\alpha>0$, and hence $\Phi_P(\alpha)>0$.
\end{proof}

Proposition~\ref{prop:endpointall} contains Corollary~\ref{cor:endpoint}. Since
$L$ is zero-diminishing, real-rootedness of $\LP$ also forces that of $P$. The
Hermite polynomials of Proposition~\ref{prop:hermite} show that the converse
fails.

\begin{remark}\label{rem:consistency}
Lemma~\ref{lem:signs} is a degree-two instance of the zero-diminishing property
quoted in Section~\ref{sec:structural}: were $\disc P<0<\disc\LP$ possible, $P$
would have no real zero while $\LP$ had two, so $L$ would have increased the
number of real zeros. The collision analysis therefore reproves that property
independently in degree two.
\end{remark}

For $P(x)=(x-1)(x-2)=x^{2}-3x+2$ one has $\disc P=1$, $\disc\LP=-7$, and $t=9/2$, so $\alpha_0=2/7$ and $x_0=12/7$. Here
$\Delta_\alpha=(\alpha+2)(2-7\alpha)$, so writing $\alpha=2/7-\delta$ gives
$\Delta_\alpha=16\delta-7\delta^{2}$ and each branch differs from the midpoint
by
\[
  \frac{\sqrt{\Delta_\alpha}}{4}
  =\sqrt{\delta}-\frac{7}{32}\delta^{3/2}+O(\delta^{5/2}).
\]
The collision is therefore an ordinary square-root branch point, and no Puiseux
machinery is needed in the quadratic case: the explicit root formula supplies
the exponent directly.

\begin{remark}[Comparison with the root locus of control theory]\label{rem:rootlocus}
The phenomena of this section have close analogues in Evans's root locus method
\cite{Evans1948}. There, the orders at which two real zeros meet and leave the
real axis are the \emph{breakaway points}, detected by the vanishing of a
discriminant. At large parameter the locus is organized by asymptotes with a
common centroid. Theorem~\ref{thm:collision} plays the role of the breakaway
computation, Remark~\ref{rem:puiseux} that of the square-root departure from the
axis, and $\CP(\lambda)$ that of the centroid. Since the classical root locus varies a
linear pencil while \eqref{eq:Galpha} is a gamma-weighted deformation, the
correspondence is one of structure and vocabulary rather than of theorems.
\end{remark}

\begin{remark}[An exact barycenter law]\label{rem:barycentre}
For $m=2$ the mean of the two zeros of $G_\alpha$ is
$-b_1(\alpha)/(2b_2)=-a_1(\alpha+2)/(4a_2)$, which is \emph{exactly} affine in
$\alpha$. It starts at $-a_1/(2a_2)$, the mean of the zeros of $P$, and moves
with constant velocity $-a_1/(4a_2)$, which is precisely the mean of the two
zeros of $\LP$. So for the average of the branches the asymptotic velocity law
of Corollary~\ref{cor:continuous} holds identically at every order, with no
error term. Exact barycenter identities of this type, in the
fractional-derivative direction, are obtained in \cite{WaniSheikh2026}.
\end{remark}

\subsection{Arbitrary degree}\label{subsec:gencoll}
Although no closed form for the collision orders should be expected beyond
degree two, the structure responsible for Theorem~\ref{thm:collision} persists
in every degree.

\begin{proposition}\label{prop:discpoly}
Let $\deg P=m\ge2$. Then $\Delta_\alpha=\disc_x G_\alpha$ is a polynomial in
$\alpha$ of degree at most $m(m-1)$, with
\begin{equation}\label{eq:endpoints}
  \Delta_0=(m!)^{2m-2}\disc P,
  \qquad
  [\alpha^{m(m-1)}]\,\Delta_\alpha=\disc\LP.
\end{equation}
The degree is exactly $m(m-1)$ if and only if $\disc\LP\neq0$.
\end{proposition}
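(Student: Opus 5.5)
We use the standard formula for the discriminant of a polynomial of degree exactly $m$ as a homogeneous form in its coefficients. For $f=\sum_{j=0}^m b_jx^j$ with $b_m\neq0$ we have $\disc f=\frac{(-1)^{m(m-1)/2}}{b_m}\operatorname{Res}(f,f')$. Equivalently, $\disc f$ is a polynomial in $b_0,\dots,b_m$ that is homogeneous of degree $2m-2$. It is also \emph{isobaric}: every monomial $\prod_j b_j^{e_j}$ occurring in it satisfies $\sum_j j\,e_j=m(m-1)$. Isobaricity follows from $\disc(f(tx))=t^{m(m-1)}\disc f$, since the roots scale by $t^{-1}$ and there are $m(m-1)$ ordered pairs of roots. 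Applying this to $G_\alpha$, whose leading coefficient $b_m(\alpha)=m!\,a_m$ is a nonzero constant, shows that $\Delta_\alpha=D(b_0(\alpha),\dots,b_m(\alpha))$ is a polynomial in $\alpha$, where $D$ is the universal discriminant form.

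For the degree bound, recall from the text after \eqref{eq:Galpha} that $\deg_\alpha b_j\le m-j$. A monomial $\prod_j b_j^{e_j}$ of $D$ therefore has $\alpha$-degree at most $\sum_j(m-j)e_j=m\sum_j e_j-\sum_j je_j$. Homogeneity gives $\sum_j e_j=2m-2$ and isobaricity gives $\sum_j je_j=m(m-1)$, so this is $m(2m-2)-m(m-1)=m(m-1)$. Hence $\deg_\alpha\Delta_\alpha\le m(m-1)$.

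For the top coefficient, the same count shows that every monomial attains the bound only through the top-degree parts of the $b_j$. The coefficient of $\alpha^{m-j}$ in $b_j$ is $a_j\,j!$, the coefficient of $y^j$ in $\LP$. Hence
\[
[\alpha^{m(m-1)}]\Delta_\alpha=D(a_0\,0!,\dots,a_m\,m!)=\disc\LP .
\]
A cleaner way to see this is to write $b_j(\alpha)=\alpha^{m-j}\bigl(j!\,a_j+O(\alpha^{-1})\bigr)$. Then $G_\alpha(x)=\alpha^m\,\widetilde G(x/\alpha)$, where $\widetilde G\to\LP$ coefficientwise. Using the scaling rule $\disc(\alpha^m g(x/\alpha))=\alpha^{m(2m-2)}\alpha^{-m(m-1)}\disc g$ gives $\Delta_\alpha=\alpha^{m(m-1)}\bigl(\disc\LP+O(\alpha^{-1})\bigr)$. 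I would present this scaling version, since it avoids bookkeeping. The degree is then exactly $m(m-1)$ precisely when this coefficient $\disc\LP$ is nonzero.

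For $\alpha=0$, \eqref{eq:Galpha} gives $b_j(0)=a_j\,j!\,\frac{m!}{j!}=m!\,a_j$, so $G_0=m!\,P$. By homogeneity of degree $2m-2$, $\Delta_0=(m!)^{2m-2}\disc P$.

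The only delicate point is the isobaric and homogeneous degree count, equivalently the scaling identity for $\disc$ with the normalization chosen. I would verify it against Lemma~\ref{lem:discfactor}. For $m=2$ the predicted degree is $2$ with top coefficient $\disc\LP$, and $\Delta_0=4\disc P$, matching $(2!)^{2}\disc P$.
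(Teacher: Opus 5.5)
Your proposal is correct and is essentially the paper's proof: the paper assigns $b_j$ the weight $m-j$, notes that $b_j\mapsto t^{m-j}b_j$ turns $p$ into $t^{m}p(x/t)$ with discriminant $t^{m(m-1)}\disc p$, and reads off the top coefficient by replacing each $b_j$ with $a_j\,j!$. This is your scaling version, and your homogeneity-plus-isobaricity count is the same weight computed in two pieces; the paper also evaluates $\Delta_0$ from $G_0=m!\,P$ exactly as you do. One small fix: in your justification of $\disc(f(tx))=t^{m(m-1)}\disc f$, scaling the roots alone contributes $t^{-m(m-1)}$, and the stated exponent appears only after you include the factor $(t^{m}b_m)^{2m-2}$ from the leading coefficient.
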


\begin{proof}
The discriminant of a degree-$m$ polynomial is a polynomial in its
coefficients, and each $b_j$ is a polynomial in $\alpha$, so $\Delta_\alpha$ is
a polynomial in $\alpha$.

At $\alpha=0$ we have $b_j(0)=a_j\,j!\prod_{i=j+1}^{m}i=a_j\,j!\,(m!/j!)=m!\,a_j$,
so $G_0=m!\,P$. For a degree-$m$ polynomial $p$ and a constant $c\neq0$ one has
$\disc(cp)=c^{2m-2}\disc(p)$, because $\disc(p)=p_m^{2m-2}\prod_{i<j}(r_i-r_j)^2$
and scaling changes only the leading coefficient. Hence
$\Delta_0=(m!)^{2m-2}\disc P$.

For the leading coefficient, assign $b_j$ the weight $m-j$. The discriminant
is then weighted homogeneous of total weight $m(m-1)$. Indeed, replacing $b_j$
by $t^{m-j}b_j$ produces $t^{m}p(x/t)$, whose zeros are $t$ times those of $p$
and whose leading coefficient is unchanged. By
$\disc(p)=p_m^{2m-2}\prod_{i<j}(r_i-r_j)^2$, the discriminant is therefore
multiplied by $t^{2\binom{m}{2}}=t^{m(m-1)}$. (For discriminants and their
homogeneity properties see \cite[Ch.~12]{GKZ1994}.) Since $b_j$ has degree at
most $m-j$ in $\alpha$, with $a_j\,j!$ as the coefficient of $\alpha^{m-j}$, the
coefficient of $\alpha^{m(m-1)}$ in $\Delta_\alpha$ is obtained by replacing
each $b_j$ by $a_j\,j!$. That substitution turns $\disc_x\sum_j b_jx^{j}$ into
$\disc_x\sum_j a_j\,j!\,x^{j}=\disc\LP$.
\end{proof}

Since $\Gamma(\alpha+m+1)\neq0$ for $\alpha>0$, collisions at positive order
are exactly the positive zeros of $\Delta_\alpha$. Proposition~\ref{prop:discpoly}
therefore yields an existence criterion valid in every degree.

\begin{corollary}\label{cor:gencoll}
Let $\deg P=m\ge2$ with $\disc\LP\neq0$. There are at most $m(m-1)$ collision
orders in total. Suppose now that $P$ is real and that $\disc P$ and
$\disc\LP$ are nonzero. If they have opposite signs, then at least one
collision occurs at positive order, and the number of positive collision
orders, counted with multiplicity, is odd. If they have the same sign, that
number is even, possibly zero.
\end{corollary}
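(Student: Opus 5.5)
The plan is to read everything off Proposition~\ref{prop:discpoly}, which makes $\Delta_\alpha$ a real polynomial in $\alpha$ of exact degree $m(m-1)$ once $\disc\LP\neq0$. The first claim is then immediate. A collision order is by definition a zero of $\Delta_\alpha$. A nonzero polynomial of degree $m(m-1)$ has at most $m(m-1)$ zeros, so there are at most $m(m-1)$ collision orders. This includes the negative ones at which $G_\alpha$ is still defined by the product formula \eqref{eq:Galpha}.

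For the sign statements, take $P$ real. Then every $b_j(\alpha)$ has real coefficients, and so does $\Delta_\alpha$, being a polynomial with integer coefficients in the $b_j$.

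\begin{itemize}
\item At $\alpha=0$, \eqref{eq:endpoints} gives $\Delta_0=(m!)^{2m-2}\disc P$. This is nonzero and has the sign of $\disc P$, since the factor $(m!)^{2m-2}$ is positive.
\item As $\alpha\to+\infty$, the sign of $\Delta_\alpha$ is that of its leading coefficient, which by \eqref{eq:endpoints} is $\disc\LP\neq0$.
\end{itemize}

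Next count the zeros of $\Delta_\alpha$ in $(0,\infty)$ with multiplicity. Write $\Delta_\alpha=c\prod_i(\alpha-\alpha_i)^{\mu_i}\,g(\alpha)$, where the $\alpha_i$ are the positive real zeros and $g$ has no zero in $(0,\infty)$. Since $\Delta_0\neq0$, no zero sits at $0$, so $g$ has constant sign on $[0,\infty)$. Comparing signs at $0$ and at $+\infty$ then gives
\[
\operatorname{sign}(\disc P)\cdot\operatorname{sign}(\disc\LP)=(-1)^{\sum_i\mu_i}.
\]
In words, the parity of the number of positive zeros, counted with multiplicity, is odd exactly when the endpoint signs differ.

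Two conclusions follow. If the signs are opposite, the count is odd, so in particular it is at least one; the intermediate value theorem gives existence directly as well. If the signs agree, the count is even, and it may be zero.

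Finally, these zeros are collision orders at positive order. For $\alpha>0$ we have $\Gamma(\alpha+m+1)\neq0$, so $G_\alpha$ and $H(\alpha,\cdot)$ have the same zeros, as noted after Proposition~\ref{prop:discpoly}. Thus the positive zeros of $\Delta_\alpha$ are exactly the positive collision orders.

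There is no serious obstacle here. The only point needing care is that "counted with multiplicity" means multiplicity as a zero of $\Delta_\alpha$, which is the quantity the sign-change argument actually controls. With that understood, the argument is a routine parity count.
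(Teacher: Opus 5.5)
Your proposal is correct and follows the paper's proof: the count of collision orders comes from the degree bound of Proposition~\ref{prop:discpoly}, the endpoint signs come from \eqref{eq:endpoints}, and the parity comes from comparing the signs of a real polynomial at $0$ and at $+\infty$. The only difference is that you justify that last parity fact by factoring off the positive zeros, where the paper just asserts it.
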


\begin{proof}
The count is the degree bound of Proposition~\ref{prop:discpoly}. By
\eqref{eq:endpoints} the sign of $\Delta_0$ is that of $\disc P$ and the sign
of $\Delta_\alpha$ for large $\alpha$ is that of $\disc\LP$, since
$(m!)^{2m-2}>0$. A real polynomial has an odd number of zeros, with
multiplicity, in $(0,\infty)$ precisely when its values at $0$ and at
$+\infty$ have opposite signs.
\end{proof}

The bound $m(m-1)$ is not sharp, because $\Delta_\alpha$ carries factors
supported at negative $\alpha$.

\begin{proposition}\label{prop:spurious}
Let $\deg P=m\ge2$. Then
\begin{equation}\label{eq:spurious}
  \Delta_\alpha=\Bigl[\prod_{i=2}^{m}(\alpha+i)^{\,i-1}\Bigr]\Phi_P(\alpha),
\end{equation}
where $\Phi_P$ is a polynomial in $\alpha$ of degree at most $m(m-1)/2$ with
\[
  \Phi_P(0)=\kappa_m\disc P,\quad
  \kappa_m=\frac{(m!)^{2m-2}}{\prod_{i=2}^{m}i^{\,i-1}}>0,
  \qquad
  [\alpha^{m(m-1)/2}]\,\Phi_P=\disc\LP.
\]
Consequently, if $\Phi_P\not\equiv0$ (for instance if $\disc P\neq0$ or
$\disc\LP\neq0$), at most $m(m-1)/2$ collisions occur at positive order. The
universal zeros of $\Delta_\alpha$ are the $m-1$ distinct negative orders
$\alpha=-2,\dots,-m$, at which $\Delta_\alpha$ vanishes to order at least
$1,2,\dots,m-1$ respectively, with equality for generic $P$; the universal
factor has total degree $\sum_{i=2}^{m}(i-1)=m(m-1)/2$, exactly half the bound
of Proposition~\ref{prop:discpoly}.
\end{proposition}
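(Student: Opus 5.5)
The plan is to show that $\Delta_\alpha$ is divisible by $(\alpha+i)^{i-1}$ for each $i=2,\dots,m$. Since these factors are pairwise coprime, this gives divisibility by the whole universal product. The remaining assertions then come from Proposition~\ref{prop:discpoly}:
\begin{itemize}
\item $\Phi_P$ has degree at most $m(m-1)-\sum_{i=2}^m(i-1)=m(m-1)/2$.
\item $\Phi_P(0)=\Delta_0/\prod_i i^{i-1}=\kappa_m\disc P$.
\item Because the universal factor is monic in $\alpha$, the leading coefficient $[\alpha^{m(m-1)/2}]\Phi_P$ equals $[\alpha^{m(m-1)}]\Delta_\alpha=\disc\LP$.
\item The universal factor is positive for $\alpha>0$, so positive collision orders are exactly the positive zeros of $\Phi_P$. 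When $\Phi_P\not\equiv0$ there are at most $m(m-1)/2$ of them.
\end{itemize}

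For divisibility, fix $i$ and put $\varepsilon=\alpha+i$. In \eqref{eq:Galpha}, every $b_j$ with $j<i$ contains the factor $\alpha+i$, since $i\in\{j+1,\dots,m\}$. Hence $G_\alpha=x^iq_\varepsilon(x)+\varepsilon r_\varepsilon(x)$ with $\deg r_\varepsilon<i$, everything polynomial in $\varepsilon$. I would first argue for generic coefficients, with $b_i(-i)\neq0$ and $b_m\neq0$. Write $\Delta=b_m^{2m-2}\prod_{a<b}(\rho_a-\rho_b)^2$ and expand the roots as Puiseux series in $\varepsilon$.
\begin{itemize}
\item The Newton polygon of $G$ in the valuation of $\varepsilon$ has vertex $(i,0)$. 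Every point $(j,v(b_j))$ with $j<i$ has $v\ge1$.
\item Hence exactly $i$ roots tend to $0$. Each of them has valuation at least the smallest slope, which is $\ge 1/i$.
\item The $\binom i2$ pairwise differences among these small roots each have valuation $\ge1/i$. All other factors have valuation $\ge0$.
\item So $v(\Delta)\ge 2\binom i2/i=i-1$.
\end{itemize}

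Extending to all $P$ is a density step, which I expect to be the only delicate point. The remainder of $\Delta_\alpha$ upon division by $(\alpha+i)^{i-1}$ has coefficients that are polynomials in $a_0,\dots,a_m$. These polynomials vanish on the Zariski-dense generic set, hence identically. The same reasoning shows $\Phi_P$ is a polynomial in $\alpha$ and the $a_j$.

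For exact orders $1,\dots,m-1$ at generic $P$, one example suffices, because nonvanishing of $\Phi_P(-i)$ is a Zariski-open condition in the $a_j$. Take $P$ with $\LP=(y-1)^m$. By Remark~\ref{rem:ffree}, $G_\alpha$ is then a constant multiple of $L_m^{(\alpha)}(x)$. The classical Laguerre discriminant $\prod_{j=1}^m j^{\,j-2m+2}(j+\alpha)^{j-1}$ shows that $(\alpha+i)$ occurs to order exactly $i-1$. The factor $j=1$ is trivial, so the distinct universal zeros are $-2,\dots,-m$, and $\Phi_P$ is a nonzero constant for this $P$. Hence $\Phi_P(-i)\neq0$ for generic $P$, which gives equality.
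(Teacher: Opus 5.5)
Your proof is correct, and it reaches the result by a different route from the paper.

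The paper's route has two stages. First, it works over $\mathbb{C}[[\varepsilon]]$ and uses Hensel's lemma to split off a degree-$i$ factor $E_i$, which it shows is Eisenstein. It computes $\operatorname{ord}\disc E_i=i-1$ exactly by rescaling the roots by $\varepsilon^{1/i}$ and reducing to $y^i+\eta(0)$. Second, it checks that $\disc W$ and $\operatorname{Res}(E_i,W)$ are units. That check needs the extra conditions $a_0\neq0$ and $P^{(i)}$ squarefree, so the exact order $i-1$ is obtained on an explicit Zariski-open set. Divisibility is then extended to all $a$ by polynomial continuation.

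You instead separate the two halves. Your Newton-polygon bound (each of the $i$ small roots has valuation at least $1/i$) gives only $\operatorname{ord}\Delta_\alpha\ge i-1$. In exchange it needs only $a_i\neq0$, with no factorization and no unit checks. You then get exactness from the single specimen $\LP=(y-1)^m$. There $G_\alpha=(-1)^m m!\,L_m^{(\alpha)}(x)$, and the constant is independent of $\alpha$, which is worth saying explicitly since it is what makes $\Delta_\alpha=(m!)^{2m-2}\disc L_m^{(\alpha)}$. Stieltjes's formula then makes $\Phi_P$ a nonzero constant, so $\Phi_P(-i)\neq0$ is a nonempty Zariski-open condition.

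What your route buys: it is shorter and more elementary. It also shows, as a bonus, that no zero of $\Delta_\alpha$ other than $-2,\dots,-m$ is universal, which the paper leaves implicit. Its cost is that Stieltjes's discriminant becomes an input, whereas the paper derives it afterwards as a consistency check. It also gives no description of the generic locus.

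What the paper's route buys: it is self-contained and names explicit sufficient conditions for equality ($a_0a_i\neq0$ and $P^{(i)}$ squarefree). It also produces the Eisenstein structure at $\alpha=-i$, which is reused for the $i$-cycle monodromy in Remark~\ref{rem:negorder} and in the monodromy discussion after Proposition~\ref{prop:curve}.
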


\begin{proof}
Fix $i$ with $2\le i\le m$ and put $\varepsilon=\alpha+i$, working over the ring
$\mathbb{C}[[\varepsilon]]$ of formal power series, a complete discrete
valuation ring with uniformizer $\varepsilon$ and residue characteristic zero;
write $\operatorname{ord}$ for its valuation. The valuation statement is
established first for coefficient vectors in a Zariski-open subset of
coefficient space, the conditions defining it being collected below; the
passage to arbitrary $a$ is by polynomial continuation at the end of the proof.
Assume accordingly for the moment that $a_0\neq0$ and $a_i\neq0$.

By \eqref{eq:Galpha} the factor $\alpha+i$ occurs in $b_j$ exactly when
$j+1\le i\le m$, that is, when $j\le i-1$. Hence
\[
  G_\alpha(x)=x^{i}V(x,\varepsilon)+\varepsilon A(x,\varepsilon),
\]
where $V(x,\varepsilon)=\sum_{j\ge i}b_j(\alpha)x^{j-i}$ satisfies
$V(0,0)=b_i(-i)\neq0$, and $A(x,\varepsilon)=\varepsilon^{-1}\sum_{j<i}b_j(\alpha)x^{j}$
has degree at most $i-1$ in $x$ with
$A(0,0)=a_0\prod_{\ell=1,\ \ell\neq i}^{m}(\ell-i)\neq0$.

Reducing modulo $\varepsilon$ gives $G\equiv x^{i}V(x,0)$ with $V(0,0)\neq0$, so
Hensel's lemma factors $G$ over $\mathbb{C}[[\varepsilon]]$ as $G=u\,E_iW$ with
$u$ a unit, $E_i$ monic of degree $i$ satisfying $E_i\equiv x^{i}\pmod\varepsilon$,
and $W$ of degree $m-i$ with $W(0,\cdot)$ a unit. Write
$E_i=x^{i}+v_{i-1}x^{i-1}+\dots+v_0$. The congruence gives
$\operatorname{ord}v_k\ge1$ for every $k<i$, and evaluating the factorization at
$x=0$ gives $u(0,\varepsilon)\,v_0\,W(0,\varepsilon)=G_\alpha(0)=\varepsilon A(0,\varepsilon)$,
so $\operatorname{ord}v_0=1$. Thus $E_i$ is an Eisenstein polynomial over
$\mathbb{C}[[\varepsilon]]$.

The valuation of $\disc E_i$ can now be computed directly from the Eisenstein
property. Since $\mathbb{C}$ is algebraically closed of characteristic zero, the
algebraic closure of $\mathbb{C}((\varepsilon))$ is the field of Puiseux series
in $\varepsilon$. On it, $\operatorname{ord}$ is the exponent of the lowest
occurring term. Let $\theta_1,\dots,\theta_i$ be the zeros of $E_i$ there, and
write $v_0=\varepsilon\eta$ with $\eta(0)\neq0$. Substituting $x=\varepsilon^{1/i}y$
and dividing by $\varepsilon$ turns $E_i$ into the monic polynomial
\begin{equation}\label{eq:rescaled}
  \varepsilon^{-1}E_i(\varepsilon^{1/i}y)=\prod_{j=1}^{i}(y-y_j)
  =y^{i}+\sum_{k=0}^{i-1}v_k\,\varepsilon^{k/i-1}y^{k},
  \qquad y_j=\theta_j\,\varepsilon^{-1/i},
\end{equation}
whose $k$-th coefficient has valuation
$\operatorname{ord}v_k-1+k/i\ge k/i\ge0$, with equality to $0$ only at $k=0$,
where that coefficient is $\eta$. The coefficients of \eqref{eq:rescaled} are
therefore integral, and hence so are the $y_j$: if $\operatorname{ord}y_j<0$
then $\operatorname{ord}y_j^{\,i}=i\operatorname{ord}y_j<\operatorname{ord}
(c\,y_j^{\,k})$ for every $k<i$ and every integral $c$, so \eqref{eq:rescaled}
could not vanish at $y_j$. Reduction modulo the maximal ideal is a ring
homomorphism on the integral elements, so applying it to \eqref{eq:rescaled}
gives
\[
  \prod_{j=1}^{i}(y-\bar y_j)=y^{i}+\eta(0),\qquad \bar y_j=y_j\bmod\varepsilon .
\]
Because $\eta(0)\neq0$ and the characteristic is zero, the derivative $iy^{i-1}$ of
$y^{i}+\eta(0)$ vanishes only at $y=0$, which is not a zero of $y^{i}+\eta(0)$; that
polynomial is therefore separable, and the $\bar y_j$ are distinct and nonzero.
Consequently $\operatorname{ord}y_j=0$ and $\operatorname{ord}(y_j-y_k)=0$ for
$j\neq k$, that is, $\operatorname{ord}\theta_j=1/i$ and
$\operatorname{ord}(\theta_j-\theta_k)=1/i$. Since $E_i$ is monic,
$\disc E_i=\prod_{j<k}(\theta_j-\theta_k)^{2}$, so
\[
  \operatorname{ord}\disc E_i
  =\sum_{j\neq k}\operatorname{ord}(\theta_j-\theta_k)
  =i(i-1)\cdot\tfrac1i=i-1 .
\]
The exponent is exactly $i-1$ because $i$ is invertible in $\mathbb{C}$, which
is the elementary content of the tameness of the extension that $E_i$ generates.
Finally, up to a unit,
\[
  \disc(G)=\disc(E_i)\,\disc(W)\,\operatorname{Res}(E_i,W)^{2},
\]
and both $\disc(W)$ and $\operatorname{Res}(E_i,W)$ are units. The first is a unit because
$W$ reduces modulo $\varepsilon$ to a nonzero constant multiple of $V(x,0)$,
which has simple zeros for generic $a$. The second is a unit because the zeros of $E_i$
reduce to $0$ modulo $\varepsilon$ while those of $W$ do not. Hence
$\operatorname{ord}_\varepsilon\Delta_\alpha=i-1$, so $\Delta_\alpha$ vanishes at
$\alpha=-i$ to order exactly $i-1$.

The computation above used three conditions on $a$: $a_0\neq0$ and
$a_i\neq0$, so that $A(0,0)\neq0$ and $V(0,0)\neq0$; and $\disc_xV(x,0)\neq0$,
so that $\disc W$ is a unit. The unit property of $\operatorname{Res}(E_i,W)$
needs nothing further. Each condition is the nonvanishing of a polynomial in
$a$, and none of these polynomials vanishes identically. For the first two this
is clear. For the third, the coefficient of $x^{j-i}$ in $V(x,0)$ is
$b_j(-i)=a_j\,j!\prod_{\ell=j+1}^{m}(\ell-i)=(m-i)!\,a_j\,j!/(j-i)!$, so
$V(x,0)=(m-i)!\,P^{(i)}(x)$, and the third condition says that $P^{(i)}$ has
simple zeros. Since $a_i,\dots,a_m$ can be chosen to make $P^{(i)}$ any
polynomial of degree $m-i$, it holds for generic $a$. Since coefficient
space is irreducible, finitely many nonempty Zariski-open subsets meet. The
common nonvanishing locus of these conditions for $i=2,\dots,m$ is therefore a
nonempty Zariski-open, hence dense, subset, and on it $\prod_{i=2}^{m}(\alpha+i)^{i-1}$ divides $\Delta_\alpha$. Both $\Delta_\alpha$ and the remainder of
its division by that product have coefficients polynomial in $a$, so the
remainder vanishes identically, and \eqref{eq:spurious} holds for all $a$. The
degree count is
$m(m-1)-\sum_{i=2}^{m}(i-1)=m(m-1)-\tfrac{m(m-1)}{2}=\tfrac{m(m-1)}{2}$, and
the two displayed identities follow from \eqref{eq:endpoints} on evaluating the
product at $\alpha=0$ and comparing leading coefficients, the product being
monic. Finally, the factors $\alpha+i$ with $2\le i\le m$ are positive for
$\alpha>0$, so positive collisions are positive zeros of $\Phi_P$.
\end{proof}

For $m=2$ this recovers Lemma~\ref{lem:discfactor}: the product is $\alpha+2$,
$\Phi_P$ is affine, $\kappa_2=2$, and the single zero is $\alpha_0$. For
$m=3$ and $m=4$ one finds $\kappa_3=72$ and $\kappa_4=165888$, and
$\deg\Phi_P=3$ and $6$ respectively; these values, and identities
\eqref{eq:endpoints} and \eqref{eq:spurious}, were verified by symbolic
computation.

The universal factor is the one familiar from Laguerre polynomials. If $\LP=c\,(y-\lambda)^{m}$ with $\lambda\neq0$, then $G_\alpha(x)$ is a constant multiple of
$L_m^{(\alpha)}(x/\lambda)$, and \eqref{eq:spurious} reduces to Stieltjes's
formula $\disc L_m^{(\alpha)}=\prod_{j=1}^{m}j^{\,j-2m+2}(j+\alpha)^{j-1}$
\cite[(6.71.6)]{Szego1975}, with $\Phi_P$ constant. What
Proposition~\ref{prop:spurious} adds is that the same factor divides
$\Delta_\alpha$ for every $P$.

\begin{remark}[Negative orders are derivatives]\label{rem:negorder}
The proof shows more than the valuation. By \eqref{eq:Galpha},
\[
  G_{-i}(x)=(m-i)!\,x^{i}P^{(i)}(x)\qquad(1\le i\le m),
\]
as one expects from $J^{-i}=d^{i}/dx^{i}$: as $\alpha\to-i$, $m-i$ zeros of
$G_\alpha$ tend to the zeros of $P^{(i)}$. If $a_0a_i\neq0$, the other $i$ tend
to $0$ like $(\alpha+i)^{1/i}$ and, since $E_i$ is Eisenstein, a small loop of
$\alpha$ around $-i$ permutes them in a single $i$-cycle. For $i=m$ all $m$
zeros collapse. So if $a_0\neq0$ and $m\ge2$, every scaled branch $y(h)=h\,r(1/h)$, continued
from $h=0$ toward $h=-1/m$, meets a singularity no later than $-1/m$. The
all-orders expansion in $h=1/\alpha$ of Remark~\ref{rem:allorders} therefore has
radius of convergence at most $1/m$. Its open disc of convergence can therefore
cover positive orders only with $\alpha>m$; convergence at the boundary $\alpha=m$
is a separate question, and a nearer singularity may impose a larger threshold. (For $m=1$ the branch $y=-(a_0/a_1)(1+h)$ is entire.) A given branch may
have a smaller radius, set by a collision at some $|h|<1/m$. For
$\LP=(y-1)(y-2)$ the cleared discriminant is $(1+2h)(1+10h)$, and the
branches meet at $h=-1/10$. When $h=-1/m$ is the only singularity of the
branch on its circle of convergence, the local Puiseux expansion there gives
coefficients of size $K(-m)^{n}n^{-1-1/m}$. With several singularities on that
circle their contributions add and may cancel: for $\LP=2y^{2}+\sqrt6\,y+1$
the branch points are $h=\pm\frac12$ and the odd coefficients beyond the first
vanish. For $\LP=y^{2}+y-2$, the ratios of successive
coefficients tend to $-2$, with fitted power $n^{-1.49}$. The local monodromy at the orders $\alpha=-i$ has a consequence for the zero branches $r_1(\alpha),\dots,r_m(\alpha)$ as functions of $\alpha$:
\emph{if $a_0a_{m-1}a_m\neq0$, they are linearly independent over $\mathbb{C}$}, so the smallest homogeneous scalar linear differential equation with
coefficients in $\mathbb{C}(\alpha)$ that they all satisfy has order $m$. Such
an equation exists: the operator
$f\mapsto W(r_1,\dots,r_m,f)/W(r_1,\dots,r_m)$, with $W$ the Wronskian, has
coefficients that are invariant under monodromy and algebraic, hence
rational. Indeed, the loops around $-m$ and
$-(m-1)$ give an $m$-cycle and an $(m-1)$-cycle fixing one branch, the latter
because $P^{(m-1)}$ has the single zero $-a_{m-1}/(ma_m)\neq0$. The monodromy group is therefore $2$-transitive. Its permutation
representation is thus the sum of the trivial representation and an irreducible
one. A space of constant-coefficient relations $\sum_ic_ir_i\equiv0$ is
invariant, so it is $0$, the constant vectors,
the complementary subspace of vectors with zero sum, or everything. The last
two would force the branches to coincide or to vanish. The constant vectors
would mean $\sum_ir_i\equiv0$, whereas
$\sum_ir_i=-b_{m-1}(\alpha)/b_m=-a_{m-1}(\alpha+m)/(ma_m)$. If $a_{m-1}=0$ that
relation does hold, and further relations arise when $P$ is a polynomial in
$x^{d}$ for some $d\ge2$.
\end{remark}

\subsection{Degeneration to the zero-root locus}\label{subsec:degeneration}

Section~\ref{sec:geometry} showed that for fixed $P$ the origin is pinned. The
complementary question is what happens when a \emph{nonzero} transformed zero
is driven to the origin by varying $P$, that is, as the coefficient vector
approaches the locus $\{a_0=0\}$. Since $\LP(0)=a_0$ and $\LP'(0)=a_1$, a
simple transformed zero near the origin satisfies
$\lambda=-a_0/a_1+O(a_0^{2})$ when $a_1\neq0$, so $\lambda\to0$ exactly as
$a_0\to0$. The next proposition shows that this degeneration is visible in the
collision structure: a coefficient-dependent discriminant value migrates to the
universal one at $\alpha=-2$.

\begin{proposition}\label{prop:degeneration}
Let $\deg P=m\ge2$ with $a_0=0$. Then $(\alpha+2)^{2}$ divides
$\Delta_\alpha$, so that $\Phi_P(-2)=0$: the multiplicity of $\alpha=-2$ rises
from its generic value $1$ to at least $2$. Consequently, let
$P_\varepsilon$ be a continuous family of degree exactly $m$, with
$a_m(\varepsilon)$ and $\disc L[P_\varepsilon]$ bounded away from $0$,
$a_0(\varepsilon)\to0$, and $a_1(0)\neq0$. Then at least one zero of
$\Phi_{P_\varepsilon}$ tends to $-2$ as $\varepsilon\to0$. The hypothesis on
$\disc L[P_\varepsilon]$ keeps $\deg\Phi_{P_\varepsilon}$ fixed at $m(m-1)/2$,
so that no zero escapes to infinity.
\end{proposition}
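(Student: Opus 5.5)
The plan is to factor out the forced zero at the origin and use the multiplicativity of discriminants. When $a_0=0$ we have $b_0(\alpha)=0$ in \eqref{eq:Galpha}. Hence $G_\alpha(x)=x\,\widetilde G_\alpha(x)$ with $\widetilde G_\alpha(x)=\sum_{j=1}^{m}b_j(\alpha)x^{j-1}$. This polynomial has degree exactly $m-1$, since $b_m=m!\,a_m\neq0$.

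\textbf{Step 1: the factorization.} The standard product rule $\disc(fg)=\disc(f)\disc(g)\operatorname{Res}(f,g)^{2}$, with $f=x$, gives
\[
  \Delta_\alpha=\disc_x\widetilde G_\alpha\cdot\widetilde G_\alpha(0)^{2}
\]
up to a sign, because $\disc(x)=1$ and $\operatorname{Res}(x,\widetilde G_\alpha)=\pm\widetilde G_\alpha(0)$. I would double-check this identity for degree $m-1\ge1$ directly from $\disc(p)=p_m^{2m-2}\prod_{i<j}(r_i-r_j)^2$. Adjoining the root $0$ multiplies the product of squared differences by $\prod_i r_i^2=\widetilde G_\alpha(0)^2/b_m^2$. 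The power of the leading coefficient $b_m$ rises by $2$, which cancels that denominator.

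\textbf{Step 2: the double factor.} Now $\widetilde G_\alpha(0)=b_1(\alpha)=a_1\prod_{i=2}^{m}(\alpha+i)$, which contains the factor $\alpha+2$. Hence $(\alpha+2)^2$ divides $\Delta_\alpha$. This holds trivially also if $a_1=0$, since then $\Delta_\alpha\equiv0$. In the factorization \eqref{eq:spurious} the cofactor $\prod_{i=3}^{m}(\alpha+i)^{i-1}$ does not vanish at $\alpha=-2$. So one factor $\alpha+2$ is absorbed by the universal product, and the second must divide $\Phi_P$. This gives $\Phi_P(-2)=0$. By Proposition~\ref{prop:spurious}, the generic multiplicity of $-2$ as a zero of $\Delta_\alpha$ is exactly $1$, so the multiplicity has genuinely risen to at least $2$.

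\textbf{Step 3: the degeneration statement.} This is a continuity-of-roots argument. The coefficients of $\Phi_{P_\varepsilon}$ are polynomials in the coefficients of $P_\varepsilon$, because $\Phi_P$ is the exact quotient of $\Delta_\alpha$ by a fixed monic polynomial. They therefore depend continuously on $\varepsilon$. By Proposition~\ref{prop:spurious}, the coefficient of $\alpha^{m(m-1)/2}$ is $\disc L[P_\varepsilon]$, which is bounded away from $0$. So every $\Phi_{P_\varepsilon}$, and also the limit $\Phi_{P_0}$, has exact degree $m(m-1)/2$ and is not identically zero. Step 2 applied to $P_0$, which has $a_0(0)=0$, gives $\Phi_{P_0}(-2)=0$. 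Roots of monic-normalized polynomials of fixed degree depend continuously on the coefficients, for instance by Hurwitz's theorem on a small circle about $-2$. Hence some zero of $\Phi_{P_\varepsilon}$ tends to $-2$. The fixed degree is exactly what prevents a zero from escaping to infinity.

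\textbf{Main obstacle.} The only delicate point is bookkeeping in Step 2: distinguishing the factor of $\alpha+2$ already present in the universal product from the new one. Showing the new factor lands in $\Phi_P$ needs only the nonvanishing of the other universal factors at $-2$. The hypothesis $a_1(0)\neq0$ is not needed for divisibility. It only ensures the limiting picture is the generic one, with a simple transformed zero $\lambda\approx-a_0/a_1$ moving to the origin.
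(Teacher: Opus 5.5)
Your proposal is correct and is the paper's proof with more detail: you factor $G_\alpha=x\,\widetilde G_\alpha$, use $\disc(x\widetilde G_\alpha)=\widetilde G_\alpha(0)^{2}\disc\widetilde G_\alpha$ with $\widetilde G_\alpha(0)=b_1(\alpha)=a_1\prod_{i=2}^{m}(\alpha+i)$, observe that the exponent $2$ at $\alpha=-2$ exceeds the universal exponent $1$ in \eqref{eq:spurious}, and conclude by continuity of zeros. Your added details (the sign-free check of the discriminant identity, the trivial case $a_1=0$, and the Hurwitz argument using $\Phi_{P_0}\not\equiv0$) are sound; note also that the hypothesis $a_1(0)\neq0$ is actually forced by $\disc L[P_0]\neq0$, since $a_0=a_1=0$ would give $L[P_0]$ a double zero at the origin.
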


\begin{proof}
If $a_0=0$ then $b_0\equiv0$ in \eqref{eq:Galpha}, so $G_\alpha(x)=x\,R_\alpha(x)$
with $R_\alpha(x)=\sum_{j\ge1}b_j(\alpha)x^{j-1}$ and
$R_\alpha(0)=b_1(\alpha)=a_1\prod_{i=2}^{m}(\alpha+i)$. For a polynomial
factored as $x\,R(x)$ one has $\disc(xR)=R(0)^{2}\disc(R)$, so
$\Delta_\alpha$ carries the factor
$b_1(\alpha)^{2}=a_1^{2}\prod_{i=2}^{m}(\alpha+i)^{2}$. At $i=2$ the exponent
$2$ exceeds the universal exponent $i-1=1$ of
Proposition~\ref{prop:spurious}, so $(\alpha+2)$ divides $\Phi_P$. The final
statement follows because the zeros of a polynomial depend continuously on its
coefficients \cite[Ch.~1]{Marden1966}.
\end{proof}

In degree two this is explicit: the expression $-2\disc P/\disc\LP$ of
\eqref{eq:alpha0} equals $-2$ precisely when $a_0=0$, since then $\disc P=\disc\LP=a_1^{2}$, and
\eqref{eq:discfactor} degenerates to $\Delta_\alpha=a_1^{2}(\alpha+2)^{2}$. For
the model family $P_\varepsilon(x)=x^{2}-x+\varepsilon$ one finds
$\lambda(\varepsilon)=\varepsilon+O(\varepsilon^{2})$ for the small transformed
zero and $\alpha_0(\varepsilon)=-2-8\varepsilon+O(\varepsilon^{2})$ for the
collision order, so both degenerations occur at the same rate.

The locus $\{a_0=0\}$ is therefore where two phenomena coincide: a transformed
zero reaches the rigid origin, and a coefficient-dependent discriminant value of the
order projection merges with a universal one. Driving $\lambda\to0$ through
variation of $P$ is thus a collision of discriminant values. When $a_1\neq0$ the joint implicit function theorem in $(\varepsilon,h)$
already describes the moving zero (Section~\ref{sec:limitations}); the
degenerate cases need a genuine two-parameter unfolding, which we do not carry
out here.

\begin{remark}[The generic local model at a collision]\label{rem:puiseux}
Away from such degenerations the local picture is the standard one. Write
$Q(h,y)$ for $Q_h(y)$. If $Q(h_0,y_0)=\partial_yQ(h_0,y_0)=0$ while
$\partial_y^{2}Q(h_0,y_0)\neq0$ and $\partial_hQ(h_0,y_0)\neq0$, then Weierstrass preparation gives
\[
  y(h)=y_0\pm c\,(h-h_0)^{1/2}+O(h-h_0),\qquad c\neq0,
\]
so a simple two-sheet collision is an ordinary square-root branch point
\cite[Ch.~IV]{Walker1950}. The explicit computation for $P=(x-1)(x-2)$ in
Section~\ref{subsec:quadcoll} realizes this model. Collisions of more than two
sheets require other fractional exponents, governed by the local Newton
polygon. For real coefficients $Q_{\bar h}(\bar y)=\overline{Q_h(y)}$, so
nonreal zeros occur in conjugate pairs and a pair of simple real zeros can
leave the real axis only by meeting at a real point of the discriminant locus.
\end{remark}

\subsection{The implicit root-velocity equation}
Along any simple reduced root branch we have $H(\alpha,r(\alpha))=0$, and
implicit differentiation gives $dr/d\alpha=-H_\alpha(\alpha,r)/H_x(\alpha,r)$. Since
$\frac{d}{dz}\frac{1}{\Gamma(z)}=-\frac{\psi(z)}{\Gamma(z)}$ with
$\psi=\Gamma'/\Gamma$ \cite[\S5.2]{DLMF}, we obtain
\begin{equation}\label{eq:ode}
  \frac{dr}{d\alpha}
  =\frac{\displaystyle\sum_{j=0}^{m}
     a_j\,j!\,\psi(j+\alpha+1)\,r^{j}/\Gamma(j+\alpha+1)}
        {\displaystyle\sum_{j=1}^{m}
     j\,a_j\,j!\,r^{j-1}/\Gamma(j+\alpha+1)}.
\end{equation}
Along a root branch this identity is valid precisely while
$H_x(\alpha,r)\neq0$, that is, away from the collisions of Proposition~\ref{prop:spurious}.
Corollary~\ref{cor:continuous} supplies its large-$\alpha$ asymptotics, and
Remark~\ref{rem:mean} identifies its mean over a unit interval with the
discrete spacing.

\section{The root locus as a plane curve}\label{sec:curve}

The objects assembled in Sections~\ref{sec:parameter}--\ref{sec:collision} are
the local geometry of a single global one. Recall $G_\alpha$ from
\eqref{eq:Galpha} and regard $\alpha$ and $x$ as two complex coordinates.

\subsection{The curve and its points at infinity}

\begin{proposition}\label{prop:curve}
Let $\deg P=m\ge1$. Then $G(\alpha,x):=G_\alpha(x)$ is a polynomial in
$(\alpha,x)$ of total degree exactly $m$, its Newton polygon is contained in
the triangle $\{(a,j):a,j\ge0,\ a+j\le m\}$, and its degree-$m$ homogeneous
part is
\begin{equation}\label{eq:leadingform}
  F_m(\alpha,x)=\sum_{j=0}^{m}a_j\,j!\,\alpha^{m-j}x^{j}
  =\alpha^{m}\,\LP(x/\alpha).
\end{equation}
Consequently the points at infinity of the affine curve
$V=\{G=0\}\subset\mathbb{C}^{2}$ are exactly the distinct directions $[1:\lambda]$,
one for each distinct zero $\lambda$ of $\LP$; in projective coordinates
$[\alpha:x:z]$ these are the points $[1:\lambda:0]$. There is no additional
point in the direction $\alpha=0$. For a simple zero $\lambda$ the projective
closure meets the line at infinity transversally at $[1:\lambda:0]$, so that
point is nonsingular and carries a single smooth branch. At a multiple zero, the intersection multiplicity with the line
at infinity and the number of branches of the normalization above that point
need not agree; we make no claim about the latter.
\end{proposition}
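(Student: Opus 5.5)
The plan is to read everything off the explicit coefficients $b_j(\alpha)=a_j\,j!\prod_{i=j+1}^{m}(\alpha+i)$ of \eqref{eq:Galpha}. Each $b_j$ is a polynomial in $\alpha$ of degree at most $m-j$, with exact degree $m-j$ and top coefficient $a_j\,j!$ whenever $a_j\neq0$. Hence every monomial $\alpha^{a}x^{j}$ occurring in $G$ has $a\le m-j$, which is the triangle statement for the Newton polygon. Collecting the terms with $a+j=m$ gives $F_m=\sum_j a_j\,j!\,\alpha^{m-j}x^{j}=\alpha^m\LP(x/\alpha)$. Since $a_m\,m!\neq0$, the monomial $x^m$ survives in $F_m$, so the total degree is exactly $m$.

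For the points at infinity, homogenize to $\bar G(\alpha,x,z)$. Setting $z=0$ leaves $F_m(\alpha,x)$. Factor the binary form as $F_m=m!\,a_m\prod_i(x-\lambda_i\alpha)$ over the zeros $\lambda_i$ of $\LP$, counted with multiplicity; this factorization holds because $\LP$ has degree exactly $m$. The zero set of $F_m$ in $\mathbb{P}^1$ is therefore exactly the set of directions $[1:\lambda]$. The direction $\alpha=0$, that is $[0:1:0]$, would require $F_m(0,1)=m!\,a_m=0$, which is false. So there is no extra point there.

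For transversality at a simple zero $\lambda$, I would work in the affine chart $\alpha=1$ with coordinates $(x,z)$. The local equation is $g(x,z)=\bar G(1,x,z)=F_m(1,x)+z\,F_{m-1}(1,x)+\cdots$, and the line at infinity is $\{z=0\}$. The intersection multiplicity at $(\lambda,0)$ is the order of vanishing of $g(x,0)=\LP(x)$ at $x=\lambda$. This equals $1$ exactly when $\lambda$ is simple, giving transversality. In that case $\partial_x g(\lambda,0)=\LP'(\lambda)\neq0$, so the point is nonsingular and the implicit function theorem gives a single smooth branch $x=x(z)$. Under $z=1/\alpha$, i.e.\ $h$, this branch is consistent with Theorem~\ref{thm:branch}: there $y=x/\alpha$ and $x(z)=y(z)$ in the chart. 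For a multiple zero, the intersection multiplicity is the multiplicity of $\lambda$ in $\LP$, and the statement makes no claim about branches, so nothing further is needed.

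The only mildly delicate step is bookkeeping the homogenization. One must check that the lower homogeneous parts do not affect the restriction to $z=0$, which is automatic. One must also check that "exactly" the distinct directions occur, which follows from the factorization of $F_m$. I expect no real obstacle.
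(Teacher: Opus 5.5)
Your proposal is correct and follows essentially the same route as the paper: degree bookkeeping on $b_j(\alpha)$ gives the Newton triangle and the leading form, $F_m(0,1)=m!\,a_m\neq0$ excludes the direction $[0:1:0]$, and in the chart $\alpha=1$ one has $g(x,0)=\LP(x)$ and $g_x(\lambda,0)=\LP'(\lambda)\neq0$, which gives transversality and a single smooth branch. Your aside about Theorem~\ref{thm:branch} is also right, since the chart coordinates are exactly $(y,h)=(x/\alpha,1/\alpha)$ and $g(y,h)=\prod_{\ell=1}^{m}(1+\ell h)\,Q_h(y)$.
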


\begin{proof}
By \eqref{eq:Galpha}, $b_j(\alpha)=a_j\,j!\prod_{i=j+1}^{m}(\alpha+i)$ has
degree at most $m-j$ in $\alpha$, with equality when $a_j\neq0$. Every monomial of $G$ therefore has the form
$\alpha^{a}x^{j}$ with $a\le m-j$, which gives the Newton polygon statement and
total degree at most $m$. The monomial $a_m\,m!\,x^{m}$ has degree $m$, so the
degree is exactly $m$. The degree-$m$ part collects, for each $j$, the leading
$\alpha$-term $a_j\,j!\,\alpha^{m-j}$ of $b_j$, which is
\eqref{eq:leadingform}. The directions at infinity of a plane curve are the zeros of its leading form.
Since $F_m(0,1)=a_m\,m!\neq0$, the direction $[0:1]$ is not among them, and on
$\alpha\neq0$ the equation $F_m(\alpha,x)=0$ reads $\LP(x/\alpha)=0$. In the chart
$\alpha=1$ at infinity the curve is $g(x,z)=\LP(x)+z\,F_{m-1}(1,x)+\dots=0$, so at a
simple zero $g_x(\lambda,0)=\LP'(\lambda)\neq0$; hence $z$ is a local parameter on the
curve and its intersection with $z=0$ is transverse.
\end{proof}

Thus the entire family of root trajectories, over all complex orders at once,
lies on one affine plane curve of degree $m$ in the coordinates $(\alpha,x)$,
counted with multiplicity. The curve is defined over $\mathbb{Q}$ whenever $P$
is. (For $P=x^m$, $G=m!\,x^m$ and the curve is the line $x=0$ taken $m$
times.) Treating the order as a second indeterminate is not new. Galligo does this
for fractional \emph{derivatives}: he forms a bivariate polynomial $P_f(x,q)$ in
the variable and the order, and iterates the resulting homotopy to associate to
$f$ a plane spline curve he calls the \emph{stem} of $f$ \cite{Galligo2013}. What follows is the corresponding
picture in the integration direction, where the leading form turns out to be the
transform $\LP$ of Borwein, Chen, and Dilcher: the classical scaling theorem is
the statement that $V$ has the asymptotic directions prescribed by that form.

If $a_0\neq0$, then $G$ is irreducible: it is Eisenstein at the prime
$\alpha+m$ of $\mathbb{C}[\alpha]$, since every coefficient below the top
contains $\alpha+m$ and the constant contains it once. With $E=x\,d/dx$ it
satisfies the shift identity
$(\alpha+E)G(\alpha,x)=(\alpha+m)G(\alpha-1,x)$, the curve's form of
$\frac{d}{dx}I^{k}P=I^{k-1}P$. When $a_0\neq0$, the negative fibers
$G(-\ell,x)=(m-\ell)!\,x^{\ell}P^{(\ell)}(x)$ of Remark~\ref{rem:negorder}
supply cycles of length $m$, $m-1$ (when $a_{m-1}\neq0$) and $2$ (when
$a_2\neq0$ and $P''$ is squarefree) in the monodromy of the projection to $\alpha$, which under these
conditions is the full symmetric group: the $m$-cycle gives transitivity, the
$(m-1)$-cycle fixing a point gives two-transitivity, and a two-transitive group
containing a transposition contains all transpositions. For $P$ with rational coefficients satisfying these conditions, the Galois
group of $G$ over $\mathbb{Q}(\alpha)$ is then $S_m$, and by Hilbert's
irreducibility theorem $G(\alpha_0,x)$ has Galois group $S_m$ over $\mathbb{Q}$
for all integers $\alpha_0$ outside a thin set. In the case
$\LP=c\,(y-\lambda)^m$ these fibers are generalized Laguerre polynomials, whose
Galois groups have been studied since Schur, more recently by Newton-polygon
methods; see Hajir \cite{Hajir2005} and the references there.

\subsection{The constant term is an asymptote offset}

The next homogeneous piece of $G$ upgrades a direction to a line, and
reproduces $\CP$ without any reference to Theorem~\ref{thm:branch}.

\begin{proposition}\label{prop:asymptote}
Let $\lambda\neq0$ be a simple zero of $\LP$. Then $V$ has an asymptote in the
direction $\lambda$, namely the line $x=\lambda\alpha+c$ with
\[
  c=\frac{\sum_{j}a_j\,j!\,\sigma_1(j)\lambda^{j}}{\LP'(\lambda)}
   =\frac{R_P(\lambda)}{2\LP'(\lambda)}=\CP(\lambda).
\]
\end{proposition}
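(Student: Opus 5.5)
The plan is to read the asymptote off the two top homogeneous pieces of $G$, in the standard way for a plane curve at a simple point at infinity. Write $G=F_m+F_{m-1}+F_{m-2}+\dots$ by total degree, with $F_m(\alpha,x)=\alpha^m\LP(x/\alpha)$ from Proposition~\ref{prop:curve}. Since $b_j(\alpha)=a_j\,j!\prod_{i=j+1}^{m}(\alpha+i)$, the coefficient of $\alpha^{m-j-1}$ in $b_j$ is $a_j\,j!\sum_{i=j+1}^{m}i$. Hence
\[
  F_{m-1}(\alpha,x)=\sum_j a_j\,j!\Bigl(\sum_{i=j+1}^{m}i\Bigr)\alpha^{m-1-j}x^j .
\]
Using $\sum_{i=j+1}^{m}i=\sigma_1(m)-\sigma_1(j)$, this becomes
\[
  F_{m-1}(1,x)=\sigma_1(m)\LP(x)-\sum_j a_j\,j!\,\sigma_1(j)x^j
  =\sigma_1(m)\LP(x)-\tfrac12\RP(x).
\]

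Next pass to the chart at infinity used in the proof of Proposition~\ref{prop:curve}. Set $z=1/\alpha$ and $u=x/\alpha$, so that
\[
  g(u,z)=\alpha^{-m}G=\LP(u)+zF_{m-1}(1,u)+z^2F_{m-2}(1,u)+\dots .
\]
Because $g_u(\lambda,0)=\LP'(\lambda)\neq0$, the implicit function theorem gives a holomorphic branch $u(z)=\lambda+c\,z+O(z^2)$. Its first coefficient is
\[
  c=-\frac{F_{m-1}(1,\lambda)}{\LP'(\lambda)}=\frac{\RP(\lambda)}{2\LP'(\lambda)},
\]
the $\sigma_1(m)\LP$ term vanishing at $\lambda$. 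This agrees with both displayed forms of $c$ in the statement and with \eqref{eq:CD}.

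Finally translate back: $x=\alpha u(1/\alpha)=\lambda\alpha+c+O(\alpha^{-1})$ along the branch through $[1:\lambda:0]$. So the distance from the curve to the line $x=\lambda\alpha+c$ tends to $0$ along that branch, and that line is the asymptote. Equivalently, $x-\lambda\alpha-c$ is the tangent line at the smooth point $[1:\lambda:0]$ written affinely. The hypothesis $\lambda\neq0$ is used only to place the asymptote in the moving-branch picture; the computation itself does not need it.

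The main obstacle, mild as it is, is the bookkeeping for $F_{m-1}$. One must extract the subleading $\alpha$-coefficient of each product $\prod_{i=j+1}^{m}(\alpha+i)$ correctly and recognize the resulting combination as $\sigma_1(m)\LP-\tfrac12\RP$, so that the $\LP$ part vanishes at $\lambda$. As a cross-check, $g(u,z)$ is $z^{m}\Gamma(1/z+m+1)/\Gamma(1/z+1)$ times $Q_z(u)$, and the normalizing factor is nonvanishing near $z=0$. So $u(z)$ coincides with the branch $y(h)$ of Theorem~\ref{thm:branch} at $h=z$, which confirms $c=\CP(\lambda)$ independently.
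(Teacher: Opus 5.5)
Your proof is correct and is essentially the paper's argument. The paper substitutes $x=\lambda\alpha+c$ directly into $G$ and sets the $\alpha^{m-1}$ coefficient $c\,\LP'(\lambda)+\sigma_1(m)\LP(\lambda)-\tfrac12\RP(\lambda)$ to zero, which is the same computation as your $-F_{m-1}(1,\lambda)/\LP'(\lambda)$, built on the subleading coefficient $\sigma_1(m)-\sigma_1(j)$ of $\prod_{i=j+1}^{m}(\alpha+i)$; your implicit-function version in the chart at infinity additionally exhibits the branch $x=\lambda\alpha+c+O(\alpha^{-1})$, so it gives the metric sense of ``asymptote'' as well as the tangency at $[1:\lambda:0]$.
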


\begin{proof}
Write $\prod_{i=j+1}^{m}(\alpha+i)
=\alpha^{m-j}+\bigl(\sigma_1(m)-\sigma_1(j)\bigr)\alpha^{m-j-1}
+O(\alpha^{m-j-2})$, since the coefficient of the second term is
$\sum_{i=j+1}^{m}i$. Substituting $x=\lambda\alpha+c$ and using
$(\lambda\alpha+c)^{j}=\lambda^{j}\alpha^{j}+j\lambda^{j-1}c\,\alpha^{j-1}
+O(\alpha^{j-2})$, the coefficient of $\alpha^{m}$ in $G(\alpha,\lambda\alpha+c)$
is $\sum_j a_j\,j!\,\lambda^{j}=\LP(\lambda)=0$, and the coefficient of
$\alpha^{m-1}$ is
\[
  c\sum_j j\,a_j\,j!\,\lambda^{j-1}
  +\sigma_1(m)\sum_j a_j\,j!\,\lambda^{j}
  -\sum_j a_j\,j!\,\sigma_1(j)\lambda^{j}
  =c\,\LP'(\lambda)-\tfrac12 R_P(\lambda),
\]
the middle sum vanishing because $\LP(\lambda)=0$ and the last equality
following from $\sigma_1(j)=j(j+1)/2$ and \eqref{eq:RS}. Setting this to zero
gives $c$, and $\LP'(\lambda)\neq0$ by simplicity.
\end{proof}

So the line $\ell_\lambda$ of Corollary~\ref{cor:geometry} is not merely
analogous to an asymptote: it is the asymptote of $V$ in the classical sense,
with offset $\CP(\lambda)$ from the origin. The derivation is by
elementary plane-curve geometry rather than by perturbation of a holomorphic
branch.

\subsection{Collisions as discriminant values}

When $\Delta_\alpha\not\equiv0$, the projection $\pi(\alpha,x)=\alpha$ is an
$m$-sheeted finite map from $V$ to the $\alpha$-line, with $\Delta_\alpha$ its
discriminant. The curve $V$ is irreducible when $a_0\neq0$, by the
Eisenstein argument above, but not when $a_0=0$; we do not assert that it is
nonsingular, and a zero of
$\Delta_\alpha$ need not be a branch value of the normalization. We therefore
speak of universal \emph{discriminant} values below, and we reserve
\emph{branch point} for places where the ramification has been established, as
in Remark~\ref{rem:puiseux}. The collisions of
Section~\ref{sec:collision} are exactly the zeros of the discriminant.
Proposition~\ref{prop:spurious} identifies $m-1$ distinct universal
discriminant values, present for every $P$ and located at
$\alpha=-2,\dots,-m$. Together they account for a universal factor of
$\Delta_\alpha$ of total degree $m(m-1)/2$. They are the orders at which the
low-order coefficients of $G_\alpha$ vanish and sheets collide, generically at
the origin. This vanishing of low-order coefficients is why the factor is universal and
coefficient-free. The remaining discriminant values, the zeros of $\Phi_P$,
depend on $P$. Circling a simple branch point transposes the two sheets meeting there,
which is the monodromy reason why the two real zeros of
Section~\ref{subsec:complex} re-emerge as a complex-conjugate pair rather than
as two independent real branches.

\begin{remark}[Genus, and why degree two is special]\label{rem:genus}
Assume $a_0\neq0$, so that $V$ is irreducible. By Baker's theorem on Newton
polygons \cite{Baker1893}, the geometric genus of $V$ is at most the number of interior lattice points of its Newton polygon.
That the bound is attained for nondegenerate coefficients is due to Khovanskii
\cite{Khovanskii1978}. For the triangle of
Proposition~\ref{prop:curve} that count is
\[
  \#\{(a,j):a,j\ge1,\ a+j\le m-1\}=\binom{m-1}{2}=\frac{(m-1)(m-2)}{2}.
\]
For $m=2$ the bound is $0$: the trajectory curve is a conic, hence rational.
This rationality is consistent with the simplicity of the quadratic case, although the closed
form of Section~\ref{subsec:quadcoll}, with $\alpha_0$ a ratio of two
discriminants, comes more specifically from the reduced discriminant being affine
in $\alpha$. For $m=3$ the bound is $1$, so the smooth model is rational or
elliptic, and it is elliptic when the Newton polygon is the full triangle and
the coefficients are nondegenerate with respect to it; for $m=4$ the bound is $3$. This growth of the genus bound is consistent with, though it does not by itself imply, the absence of a
comparably simple closed form in higher degree.
We have not verified the singularity structure of $V$, so the genus is asserted
here only as the generic bound.
\end{remark}

\subsection{The other end of the curve}\label{subsec:zeroend}

Sections~\ref{sec:expansion}--\ref{sec:geometry} describe $V$ near
$\alpha=\infty$. The opposite end is $\alpha=0$, where $J^{0}$ is the identity
and the fiber of $\pi$ consists of the zeros of $P$ itself. The first-order
behavior there is governed by a third weighted transform, this
time with harmonic rather than arithmetic weights. Write $H_j=\sum_{i=1}^{j}1/i$
with $H_0=0$, and set
\begin{equation}\label{eq:Wdef}
  W[P](y)=\sum_{j=0}^{m}a_j H_j\,y^{j}.
\end{equation}

\begin{proposition}[Initial root velocity]\label{prop:initvel}
Let $\rho$ be a simple zero of $P$. Then the branch of $V$ through
$(0,\rho)$ is holomorphic near $\alpha=0$ and
\begin{equation}\label{eq:initvel}
  \frac{dr}{d\alpha}\bigg|_{\alpha=0}=\frac{W[P](\rho)}{P'(\rho)}.
\end{equation}
In particular the Euler--Mascheroni constant, which appears in the individual
terms, cancels from the result.
\end{proposition}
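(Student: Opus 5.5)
We work with $H(\alpha,x)=\sum_j a_j\,j!\,x^j/\Gamma(j+\alpha+1)$ from \eqref{eq:H}, rather than with $G_\alpha$. The factor $\Gamma(\alpha+m+1)$ in $G_\alpha$ is nonzero and holomorphic near $\alpha=0$, so both functions have the same zero branch near $(0,\rho)$. On the other hand, $H$ avoids an extra $\alpha$-derivative of the prefactor.

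The map $(\alpha,x)\mapsto H(\alpha,x)$ is jointly holomorphic near $(0,\rho)$, since $1/\Gamma$ is entire. At $\alpha=0$ we have $H(0,x)=\sum_j a_jx^j=P(x)$, because $j!/\Gamma(j+1)=1$. Hence $H(0,\rho)=0$ and $H_x(0,\rho)=P'(\rho)\neq0$ by simplicity. The holomorphic implicit function theorem then gives a unique holomorphic branch $r(\alpha)$ with $r(0)=\rho$. Its derivative is
\[
  r'(0)=-\frac{H_\alpha(0,\rho)}{H_x(0,\rho)},
\]
which is \eqref{eq:ode} evaluated at $\alpha=0$.

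For the numerator we use $\frac{d}{dz}\frac{1}{\Gamma(z)}=-\psi(z)/\Gamma(z)$, which gives
\[
  H_\alpha(0,x)=-\sum_j a_j\,\psi(j+1)\,x^j .
\]
We then insert $\psi(j+1)=-\gamma+H_j$ \cite[\S5.4]{DLMF}. This yields
\[
  H_\alpha(0,\rho)=\gamma\,P(\rho)-W[P](\rho)=-W[P](\rho),
\]
since $P(\rho)=0$. So the $\gamma$-terms, which occur in every individual summand, cancel exactly because $\rho$ is a zero of $P$. Dividing by $H_x(0,\rho)=P'(\rho)$ gives $r'(0)=W[P](\rho)/P'(\rho)$, which is \eqref{eq:initvel}.

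There is no serious obstacle. The only points needing care are the following:
\begin{itemize}
\item the sign bookkeeping in the digamma derivative;
\item the remark that using $H$ rather than $G_\alpha$ does not change the branch;
\item the check that the branch lies on $V$ and is holomorphic in the coordinates of Proposition~\ref{prop:curve}, which follows from $G=\Gamma(\alpha+m+1)H$ with a nonvanishing factor.
\end{itemize}
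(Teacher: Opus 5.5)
Your proposal is correct and takes the same route as the paper. You apply the implicit function theorem to $H$ using $H(0,\cdot)=P$, evaluate the velocity formula \eqref{eq:ode} at $\alpha=0$, and use $\psi(j+1)=-\gamma+H_j$ so that the $\gamma$-term becomes $\gamma P(\rho)=0$. Your remark that $G=\Gamma(\alpha+m+1)H$ with a nonvanishing factor near $\alpha=0$, so the branch genuinely lies on $V$, makes explicit a point the paper leaves implicit.
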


First-order deformation formulas for simple zeros at the endpoint orders are
obtained in the derivative direction by Wani and Sheikh
\cite{WaniSheikh2026}; Proposition~\ref{prop:initvel} is the order-zero
statement in the integration direction, and no priority is claimed for the
mechanism.

\begin{proof}
Since $\Gamma(j+1)=j!$, we have $H(0,x)=P(x)$, so $H_x(0,\rho)=P'(\rho)\neq0$
and the implicit function theorem applies. Evaluating \eqref{eq:ode} at
$\alpha=0$ and using $\psi(j+1)=-\gamma+H_j$ \cite[\S5.4]{DLMF},
\[
  \frac{dr}{d\alpha}\bigg|_{\alpha=0}
  =\frac{\sum_j a_j\,\psi(j+1)\rho^{j}}{P'(\rho)}
  =\frac{-\gamma P(\rho)+W[P](\rho)}{P'(\rho)},
\]
and $P(\rho)=0$.
\end{proof}

For $P(x)=(x-1)(x-2)$ one finds $W[P](y)=-3y+\tfrac32y^{2}$, so the initial
velocities are $3/2$ at $\rho=1$ and $0$ at $\rho=2$: the larger zero is
momentarily stationary before turning back to meet its partner at $x_0=12/7$.
For $(x-1)(x-2)(x-3)$ the velocities are $23/12$, $-2/3$, and $3/4$.

\begin{remark}[The derivative at order zero]\label{rem:generator}
The operators $J^{\alpha}$ form a semigroup, $J^{\alpha}J^{\beta}=J^{\alpha+\beta}$,
so $n$ applications of the order-$1/n$ integral reproduce one ordinary
integration, and the family $r(\alpha)$ is induced by that semigroup rather than
by an interpolation chosen for convenience. The reduced family
$P\mapsto H(\alpha,\cdot)$ is not itself a semigroup, because the factor
$x^{\alpha}$ has been removed. Differentiating the coefficient
multiplier $g_j(\alpha)=\Gamma(j+1)/\Gamma(j+\alpha+1)$ of $H(\alpha,\cdot)$ at
$\alpha=0$ gives $g_j'(0)=-\psi(j+1)=\gamma-H_j$, so
\[
  \frac{\partial H}{\partial\alpha}\Big|_{\alpha=0}=\gamma P-W[P].
\]
For the semigroup itself, $\partial_\alpha J^{\alpha}P(x)|_{\alpha=0}
=(\log x+\gamma)P(x)-W[P](x)$ for $x>0$. At a zero $\rho$ of $P$ the
multiplication term vanishes, which is why the harmonic transform of
\eqref{eq:Wdef} governs the order-zero end.
\end{remark}

Differentiating once more gives $\frac{d^{2}}{d\alpha^{2}}\log g_j\big|_{0}
=-\psi'(j+1)=H_j^{(2)}-\zeta(2)$ with $H_j^{(2)}=\sum_{i\le j}i^{-2}$, and the
next term follows. Set
\begin{equation}\label{eq:W2def}
  W_2[P](y)=\sum_{j=0}^{m}a_j\bigl(H_j^{2}+H_j^{(2)}\bigr)y^{j}.
\end{equation}

\begin{proposition}[Second-order behavior at order zero]\label{prop:secondzero}
Let $\rho$ be a simple zero of $P$. Then
$r(\alpha)=\rho+r_1\alpha+r_2\alpha^{2}+O(\alpha^{3})$ with
$r_1=W[P](\rho)/P'(\rho)$ as in Proposition~\ref{prop:initvel} and
\begin{equation}\label{eq:r2}
  r_2=-\frac{\tfrac12W_2[P](\rho)-W[P]'(\rho)\,r_1+\tfrac12P''(\rho)\,r_1^{2}}
            {P'(\rho)} .
\end{equation}
Both $\gamma$ and $\zeta(2)$ cancel, the first as in
Proposition~\ref{prop:initvel} and the second because $P(\rho)=0$.
\end{proposition}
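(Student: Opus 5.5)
The plan is to expand the reduced polynomial $H(\alpha,x)=\sum_j a_j g_j(\alpha)x^j$ to second order in $\alpha$ and then substitute the branch expansion. Here $g_j(\alpha)=\Gamma(j+1)/\Gamma(j+\alpha+1)$ as in Remark~\ref{rem:generator}.

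\emph{Holomorphy.} Since $H(0,\cdot)=P$ and $P'(\rho)\neq0$, the implicit function theorem (as in Proposition~\ref{prop:initvel}) gives a holomorphic branch $r(\alpha)=\rho+r_1\alpha+r_2\alpha^2+O(\alpha^3)$. Each $g_j$ is entire in $\alpha$, and $H$ is jointly holomorphic, so these Taylor coefficients exist.

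\emph{Expansion of $g_j$.} Write $\log g_j(\alpha)=A_j\alpha+\tfrac12B_j\alpha^2+O(\alpha^3)$. By Remark~\ref{rem:generator}, $A_j=\gamma-H_j$, and by the computation preceding the statement, $B_j=H_j^{(2)}-\zeta(2)$. Exponentiating, exactly as in the proof of Lemma~\ref{lem:coeff}, gives
\[
g_j(\alpha)=1+A_j\alpha+\tfrac12\bigl(A_j^2+B_j\bigr)\alpha^2+O(\alpha^3).
\]
Expanding $A_j^2=\gamma^2-2\gamma H_j+H_j^2$, we get
\[
A_j^2+B_j=(\gamma^2-\zeta(2))-2\gamma H_j+\bigl(H_j^2+H_j^{(2)}\bigr).
\]
Summing against $a_jx^j$ then yields
\[
H(\alpha,x)=P+\alpha(\gamma P-W)+\tfrac12\alpha^2\bigl[(\gamma^2-\zeta(2))P-2\gamma W+W_2\bigr]+O(\alpha^3),
\]
with $W=W[P]$ and $W_2=W_2[P]$. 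This holds locally uniformly in $x$, since the sum is finite.

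\emph{Substitution.} Substitute $x=r(\alpha)$ into $H(\alpha,x)=0$ and collect powers of $\alpha$, using $P(\rho)=0$.

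At order $\alpha$ we get $P'(\rho)r_1+\gamma P(\rho)-W(\rho)=0$, which recovers $r_1$.

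At order $\alpha^2$ the contributions are:
\begin{itemize}
\item from $P$: $P'(\rho)r_2+\tfrac12P''(\rho)r_1^2$;
\item from the linear term: $\gamma P'(\rho)r_1-W'(\rho)r_1$;
\item from the quadratic term: $\tfrac12\bigl[(\gamma^2-\zeta(2))P(\rho)-2\gamma W(\rho)+W_2(\rho)\bigr]$.
\end{itemize}

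Now simplify. The term $(\gamma^2-\zeta(2))P(\rho)$ vanishes because $P(\rho)=0$; this is the cancellation of $\zeta(2)$. The $\gamma$-terms are $\gamma\bigl(P'(\rho)r_1-W(\rho)\bigr)$, which vanishes by the first-order equation. What remains is
\[
P'(\rho)r_2+\tfrac12P''(\rho)r_1^2-W'(\rho)r_1+\tfrac12W_2(\rho)=0,
\]
which is \eqref{eq:r2}.

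The only step needing care is the bookkeeping of the $\gamma$-terms. The cross term $-2\gamma H_j$ in $A_j^2$ is what cancels against $\gamma P'r_1$. I expect no real obstacle beyond checking that sign and the factor $\tfrac12$.
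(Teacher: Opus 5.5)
Your proposal is correct and matches the paper's proof essentially step for step. Both arguments proceed the same way: exponentiate $\log g_j=(\gamma-H_j)\alpha+\tfrac12\bigl(H_j^{(2)}-\zeta(2)\bigr)\alpha^2+O(\alpha^3)$, write $H=P+\alpha(\gamma P-W)+\alpha^2K+O(\alpha^3)$, substitute $x=r(\alpha)$, and at order $\alpha^2$ discard the $\zeta(2)$ (and $\gamma^2$) terms via $P(\rho)=0$ and the remaining $\gamma$ terms via the first-order relation $P'(\rho)r_1=W(\rho)$.
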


\begin{proof}
Exponentiating the expansion of $\log g_j$ gives
$g_j(\alpha)=1+\alpha(\gamma-H_j)
+\tfrac{\alpha^{2}}{2}\bigl[(\gamma-H_j)^{2}+H_j^{(2)}-\zeta(2)\bigr]+O(\alpha^{3})$,
so $H=P+\alpha(\gamma P-W)+\alpha^{2}K+O(\alpha^{3})$ where
$2K=\gamma^{2}P-2\gamma W+W_2-\zeta(2)P$. Substituting
$x=r(\alpha)$ into $H(\alpha,r(\alpha))=0$ and collecting powers of $\alpha$,
the linear term reproduces $r_1$, and the quadratic term is
$K(\rho)+(\gamma P-W)'(\rho)r_1+\tfrac12P''(\rho)r_1^{2}+P'(\rho)r_2=0$.
Since $P(\rho)=0$ the terms in $\gamma^{2}$ and $\zeta(2)$ vanish from
$K(\rho)$, leaving $\tfrac12(W_2(\rho)-2\gamma W(\rho))$, and the remaining
$\gamma$ cancels against $\gamma P'(\rho)r_1=\gamma W(\rho)$. Solving for $r_2$
gives \eqref{eq:r2}.
\end{proof}

For $P=(x-1)(x-2)$ this gives $r_2=1$ at $\rho=1$ and $r_2=-1$ at $\rho=2$; for
$(x-1)(x-2)(x-3)$, $r_2=761/288$ at $\rho=1$ and $15/32$ at $\rho=3$. All four
were confirmed against the implicit series. The two ends of $V$ are therefore
governed by formulas of the same shape with dual weights, as
Table~\ref{tab:ends} records.

\begin{table}[!ht]
\centering
\renewcommand{\arraystretch}{1.35}
\begin{tabular}{lll}
\hline
 & $\alpha=0$ & $\alpha\to\infty$\\
\hline
position fixed by & $\rho$, a zero of $P=\sum a_jy^{j}$
  & $\lambda$, a zero of $\LP=\sum a_j\,j!\,y^{j}$\\
weight sequence & harmonic, $H_j=\sum_{i\le j}1/i$
  & arithmetic, $\sigma_1(j)=\sum_{i\le j}i$\\
correction transform & $W[P]=\sum a_jH_jy^{j}$
  & $\tfrac12R_P=\sum a_j\,j!\,\sigma_1(j)y^{j}$\\
second-order weight & $H_j^{2}+H_j^{(2)}$, $H_j^{(2)}=\sum_{i\le j}i^{-2}$
  & $\sigma_1(j)^{2}+\sigma_2(j)$, $\sigma_2(j)=\sum_{i\le j}i^{2}$\\
result & velocity $W[P](\rho)/P'(\rho)$
  & offset $\CP(\lambda)=\tfrac12R_P(\lambda)/\LP'(\lambda)$\\
what cancels & $\gamma$, because $P(\rho)=0$
  & $\sigma_1(m)$, because $\LP(\lambda)=0$\\
\hline
\end{tabular}
\caption{The two ends of the trajectory curve $V$. At order $\alpha=0$ a simple
zero $\rho$ of $P$ starts with velocity $W[P](\rho)/P'(\rho)$
(Proposition~\ref{prop:initvel}); as $\alpha\to\infty$ the branch belonging to a
simple zero $\lambda$ of $\LP$ has offset $\CP(\lambda)$
(Theorem~\ref{thm:branch}). Here $H_j$ and $H_j^{(2)}$ are the harmonic numbers
of orders one and two, $\sigma_1(j)$ and $\sigma_2(j)$ are the sums of the first
$j$ positive integers and of their squares, $R_P$ is defined in \eqref{eq:RS},
and $\gamma$ is Euler's constant.}
\label{tab:ends}
\end{table}

In each case the answer is an auxiliary transform evaluated at the relevant
zero, divided by the derivative of the defining transform at that zero. In
each case, too, a constant independent of the zero ($\gamma$ or $\sigma_1(m)$)
drops out, because the zero annihilates the defining transform.

\section{Special-function illustrations}\label{sec:special}

\subsection{Harmonic numbers and \texorpdfstring{$e^{\gamma}$}{e\textasciicircum gamma}}\label{subsec:harmonic}
For $\varepsilon>0$ let $f_\varepsilon(x)=x^{1+\varepsilon}/(1+\varepsilon)-x$,
a binomial of the form of Section~\ref{sec:binomials} with $a=1+\varepsilon$,
$A=1/(1+\varepsilon)$, $b=1$, and $B=1$. Its positive zero after order-$k$
integration is
\[
  r_{k,\varepsilon}
  =\left[\frac{\Gamma(k+2+\varepsilon)}
              {\Gamma(1+\varepsilon)\Gamma(k+2)}\right]^{1/\varepsilon}.
\]
On taking logarithms and letting $\varepsilon\to0^{+}$, the difference
quotient of $\log\Gamma$ converges to its derivative, so
\[
  \lim_{\varepsilon\to0^{+}}\log r_{k,\varepsilon}=\psi(k+2)-\psi(1)=H_{k+1},
  \qquad\text{hence}\qquad
  \lim_{\varepsilon\to0^{+}}r_{k,\varepsilon}=e^{H_{k+1}}.
\]
Using $H_n=\log n+\gamma+\frac{1}{2n}+O(n^{-2})$ \cite[\S\S5.4, 5.11]{DLMF} we find
$e^{H_{k+1}}-e^{H_k}\to e^{\gamma}$. The reverse order of limits agrees, which
is a result rather than an assumption: for fixed $\varepsilon$,
Theorem~\ref{thm:binomial} gives the limiting spacing
\[
  \rho F(1+\varepsilon,1)
  =(1+\varepsilon)^{1/\varepsilon}
   \left[\frac{\Gamma(2)}{\Gamma(2+\varepsilon)}\right]^{1/\varepsilon}
  =\Gamma(1+\varepsilon)^{-1/\varepsilon},
\]
using $\Gamma(2+\varepsilon)=(1+\varepsilon)\Gamma(1+\varepsilon)$, and
$\log\Gamma(1+z)=-\gamma z+O(z^{2})$ \cite[\S5.7]{DLMF} gives
$\Gamma(1+\varepsilon)^{-1/\varepsilon}\to e^{\gamma}$. The scaled location $r_{k,\varepsilon}/k$ in fact converges
jointly in $k$ and $\varepsilon$. Since $r_{k,\varepsilon}=R_{k+1}(\varepsilon)$ with
$R_n(\varepsilon)=\prod_{j\le n}(1+\varepsilon/j)^{1/\varepsilon}$, the Weierstrass
product for $\Gamma$ \cite[\S5.8]{DLMF} gives
\[
  \log\frac{R_n(\varepsilon)}{n\,\Gamma(1+\varepsilon)^{-1/\varepsilon}}
  =\bigl(H_n-\log n-\gamma\bigr)
  +\sum_{j>n}\Bigl(\frac1j-\frac1\varepsilon\log\Bigl(1+\frac\varepsilon j\Bigr)\Bigr),
\]
where the first term lies in $(0,\frac1{2n})$ and the $j$th term of the sum in
$(0,\frac{\varepsilon}{2j^{2}})$. Hence
\[
  0<\log\frac{R_n(\varepsilon)}{n\,\Gamma(1+\varepsilon)^{-1/\varepsilon}}<\frac{1+\varepsilon}{2n}
  \qquad(n\ge1,\ \varepsilon>0),
\]
and $r_{k,\varepsilon}/k\to e^{\gamma}$ along every
joint path $k\to\infty$, $\varepsilon\to0^{+}$. More generally, for
fixed $a>b\ge0$ the same expansion yields
$\lim_{\varepsilon\to0^{+}}F(a\varepsilon,b\varepsilon)=e^{\gamma}$,
independently of the ratio $a:b$.

\subsection{Half-integer exponents and powers of
\texorpdfstring{$\pi$}{pi}}
For $f(x)=x-x^{1/2}$ we have $F(1,1/2)=[\Gamma(3/2)/\Gamma(2)]^{2}=\pi/4$. More
generally let $n$ and $m$ be nonnegative integers with $n>m+1/2$ and put
$d=n-m\ge1$, so that the exponent gap is $(2d-1)/2$ and
$F(n,m+1/2)=[\Gamma(m+3/2)/n!]^{2/(2d-1)}$. The Legendre duplication formula
\cite[\S5.5]{DLMF} gives
$\Gamma(m+3/2)=(2m+2)!\sqrt{\pi}/(4^{m+1}(m+1)!)$, whence
\begin{equation}\label{eq:pipower}
  F(n,m+1/2)=q^{2/(2d-1)}\,\pi^{1/(2d-1)},
  \qquad
  q=\frac{(2m+2)!}{4^{m+1}(m+1)!\,n!}\in\mathbb{Q}.
\end{equation}
For $d=1$ this is a rational multiple of $\pi$. For $d>1$ it cannot be. Indeed,
write $p=2d-1\ge3$ and suppose $q^{2/p}\pi^{1/p}=r\pi$ with $r$ rational and
positive. Raising to the $p$th power gives $q^{2}\pi=r^{p}\pi^{p}$, so
$\pi^{p-1}=q^{2}/r^{p}$ is rational with $p-1\ge2$, making $\pi$ algebraic and
contradicting the Lindemann transcendence theorem \cite{Lindemann1882}. The
first examples are
\[
  F(1,1/2)=\frac{\pi}{4},\quad
  F(2,3/2)=\frac{9\pi}{64},\quad
  F(2,1/2)=\Bigl(\frac{\pi}{16}\Bigr)^{1/3},\quad
  F(3,3/2)=\Bigl(\frac{\pi}{64}\Bigr)^{1/3}.
\]
Thus $\pi$ enters only through half-integer gamma values, that is, only through
Legendre duplication, and not through the general root-dynamics mechanism
itself. For integer exponents $F(n_1,n_2)$ is algebraic, and the map
$a_j\mapsto j!\,a_j$ introduces no $\pi$: if the coefficients of $P$ are
algebraic, so are the zeros of $\LP$.

\section{Multiple zeros: the Hermite law}\label{sec:multiple}

The simple-root hypothesis is essential for Theorem~\ref{thm:branch}. At a
multiple zero the implicit function theorem does not apply, and the scale is
set not by the first correction $\RP$ alone but by the whole coefficient
multiplier; by Remark~\ref{rem:compactC}, $\RP$ vanishes at every zero of
multiplicity at least three. The correct statement is the following.

\begin{proposition}[Multiple zeros: the Hermite law]\label{prop:multiple}
Let $\lambda\neq0$ be a zero of $\LP$ of multiplicity $q\ge2$, and let
$\xi_1,\dots,\xi_q$ be the zeros of the probabilists' Hermite polynomial
$\mathrm{He}_q$. For all large $k$, exactly $q$ zeros of $Q_{1/k}$ lie near
$\lambda$, and they satisfy $y=\lambda+\lambda\,\xi_i\,k^{-1/2}+O(k^{-1})$;
equivalently
\[
  r_k=k\lambda+\lambda\,\xi_i\sqrt k+O(1),\qquad i=1,\dots,q.
\]
Each of these zeros has an expansion in powers of $k^{-1/2}$, so
$r_k-r_{k-1}=\lambda+\lambda\xi_i/(2\sqrt k)+O(k^{-3/2})$. For odd $q$ the
central zero, with $\xi_i=0$, is holomorphic in $1/k$, and its spacing error is
$O(k^{-2})$. The same holds for
$J^{\alpha}P$ with $\alpha$ in place of $k$.
\end{proposition}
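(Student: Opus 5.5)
The plan is to blow up the $q$-fold zero at the scale $\sqrt h$ and to read $Q_h$ as a multiplier operator acting on $\LP$. Put $\theta=y\,d/dy$, so that $Q_h=c_\theta(h)\LP$, where $c_\theta(h)$ multiplies $y^j$ by $c_j(h)$. By Lemma~\ref{lem:coeff},
\[
  c_j(h)=\exp\Bigl(\sum_{r\ge1}\kappa_r(j)h^r\Bigr),\qquad \kappa_r(j)=\frac{(-1)^r\sigma_r(j)}{r}.
\]
Hence $c_j(h)=\sum_{r\ge0}p_r(j)h^r$ on $|h|<1/m$, where $p_r$ is a polynomial in $j$ of degree $2r$. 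Its top coefficient comes only from $\kappa_1(j)=-j^2/2-j/2$, and equals $(-1)^r/(2^r r!)$ times $j^{2r}$. Write $\LP=(y-\lambda)^q g(y)$ with $g(\lambda)\neq0$. Set $u=\sqrt h$ and $y=\lambda+\lambda s u$, and define
\[
  F(s,u)=u^{-q}\,Q_{u^2}(\lambda+\lambda s u).
\]
The first goal is to show that $F$ is holomorphic near $\{u=0\}$ and that
\[
  F(s,0)=\lambda^q g(\lambda)\,\mathrm{He}_q(s).
\]

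The key step, and the one I expect to need the most care, is the order count behind this claim. Taylor expanding in $w=y-\lambda$ gives
\[
  Q_h(\lambda+w)=\sum_{r,n} h^r\,\frac{w^n}{n!}\,\bigl(p_r(\theta)\LP\bigr)^{(n)}(\lambda).
\]
Since $p_r(\theta)$ is a differential operator of order $2r$, the function $p_r(\theta)\LP$ vanishes to order at least $q-2r$ at $\lambda$. So a term $h^rw^n$ can be nonzero only if $n\ge q-2r$. With $w=\lambda s u$ and $h=u^2$, every surviving term is $O(u^{n+2r})=O(u^q)$, which gives holomorphy of $F$.

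The $u^q$ part collects the terms with $n+2r=q$. For these, all $2r$ derivatives in $p_r(\theta)$ and all $n$ derivatives from Taylor must fall on $(y-\lambda)^q$. Only the top symbol $(-1)^r\theta^{2r}/(2^r r!)$ contributes, and near $\lambda$ it acts as $\lambda^{2r}D^{2r}$. This produces
\[
  \lambda^q g(\lambda)\sum_r\frac{(-1)^r q!}{2^r r!\,(q-2r)!}\,s^{q-2r}=\lambda^q g(\lambda)\,\mathrm{He}_q(s).
\]
This is the backward heat flow $e^{-D^2/2}x^q=\mathrm{He}_q(x)$ anticipated in Remark~\ref{rem:ffree}. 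I would verify the lower-order bookkeeping, in particular that subleading parts of $p_r$ only enter at $O(u^{q+1})$, by writing the degree count explicitly.

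Next come the root count and the branches. The zeros $\xi_i$ of $\mathrm{He}_q$ are real and simple, so $\partial_sF(\xi_i,0)\neq0$, and the implicit function theorem gives holomorphic $s_i(u)$ with $s_i(0)=\xi_i$. Rouché's theorem on a disc $|y-\lambda|<\delta$, exactly as in the proof of Theorem~\ref{thm:branch}, shows that $Q_h$ has exactly $q$ zeros there for small $h$. These are therefore precisely the $q$ points $\lambda+\lambda u\,s_i(u)$. Substituting $u=k^{-1/2}$ gives
\[
  r_k=k\lambda+\lambda\sqrt k\,s_i(k^{-1/2}),
\]
which is a series $k\lambda+\lambda\xi_i\sqrt k+O(1)$ in powers of $k^{-1/2}$. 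Differencing, and using $\sqrt k-\sqrt{k-1}=1/(2\sqrt k)+O(k^{-3/2})$ together with the fact that each remaining term of $r_k$ changes by $O(k^{-3/2})$ per unit step, yields the stated spacing.

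For odd $q$ the central zero has $\xi=0$. Since $Q_{u^2}$ depends only on $u^2$, one has $F(-s,-u)=(-1)^qF(s,u)$, so $-s(-u)$ solves the same equation with the same initial value $0$. Uniqueness then forces $s$ to be odd. Hence $u\,s(u)$ is even, and the branch is holomorphic in $h=1/k$. The argument of Corollary~\ref{cor:discrete} then gives spacing error $O(k^{-2})$. Finally, since the whole argument concerns the single family $Q_h$, setting $h=1/\alpha$ via \eqref{eq:sampling} gives the statement for $J^\alpha P$.
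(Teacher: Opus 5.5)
Your proof is correct. It shares the paper's architecture: the blow-up at scale $\sqrt h$, the multiplier form $Q_h=c_\theta(h)\LP$, the identity $\exp(-\tfrac12 d^2/dw^2)w^q=\mathrm{He}_q(w)$, the implicit function theorem, and the $(s,u)\mapsto(-s,-u)$ parity for the central zero. The technical core, however, is different.

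The paper expands $\log c_\theta(h)$ and shows $h\,\theta(\theta+1)\to d^2/dw^2$ in the rescaled variable. It controls the remaining terms by a Newton-interpolation estimate on divided differences, which is needed because $\theta=(h^{-1/2}+w)\,d/dw$ blows up. It then applies Rouch\'e in $w$ and argues separately that the singularity of $s^{-q}Q_{s^2}(\lambda(1+sw))$ at $s=0$ is removable.

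You instead expand $c_j(h)$ itself. You observe that its $h^r$ coefficient has degree $2r$ in $j$, with top coefficient $(-1)^r/(2^r r!)$ coming only from $\kappa_1^r/r!$. You then use that $p_r(\theta)$ is a differential operator of order $2r$, via $\theta^d=\sum_{e\le d}S(d,e)\,y^eD^e$ with Stirling numbers $S(d,e)$. The single count $n+2r\ge q$ then gives holomorphy of $F$ and the explicit Hermite coefficients at once. The bookkeeping you flag is just the remark that a term $y^eD^e$ with $e<2r$ first contributes at $u^{q+2r-e}$. Your Rouch\'e count on a fixed $y$-disc, matched against the $q$ distinct implicit-function branches, also makes the claim of exactly $q$ nearby zeros more explicit than the paper does.

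What the paper's route buys is a genuine operator limit $c_\theta(h)\to\exp(-\tfrac12 d^2/dw^2)$ on polynomials in $w$. That limit applies verbatim to any rescaled input, so it extends directly to the cluster case mentioned after the proposition. Your vanishing-order count, by contrast, is tied to an exact $q$-fold zero at $\lambda$.
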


\begin{proof}
Put $h=1/k$ and $y=\lambda(1+\sqrt h\,w)$, so that $\theta=y\,d/dy$ acts on
polynomials in $w$ as $(h^{-1/2}+w)\,d/dw$. On polynomials of degree at most
$m$, $Q_h=c_\theta(h)\LP$ with $\log c_\theta(h)=-\frac h2\theta(\theta+1)
+\sum_{n\ge2}h^{n}p_n(\theta)$, where $p_n$ has degree $n+1$, by expanding
$-\sum_{\ell\le j}\log(1+\ell h)$ and Faulhaber's formula. In the variable
$w$ we have $h\,\theta(\theta+1)=d^{2}/dw^{2}+O(h^{1/2})$. For the rest, $\theta$
is diagonalizable on this space with eigenvalues $0,\dots,m$, so
$\rho_h(\theta)=\sum_{n\ge2}h^{n}p_n(\theta)$ equals its Newton interpolant
$\sum_{i=0}^{m}\delta_i(h)\prod_{l<i}(\theta-l)$, where $\delta_i(h)$ is the
$i$-th divided difference at $0,\dots,i$ of
$j\mapsto\sum_{n\ge2}h^{n}p_n(j)=\log c_j(h)+\frac h2j(j+1)$, holomorphic for
$|h|<1/m$. Since $\deg p_n=n+1$, $\delta_i(h)=O(h^{\max(2,i-1)})$, while
$\prod_{l<i}(\theta-l)=O(h^{-i/2})$ in the variable $w$; hence
$\rho_h(\theta)=O(h^{1/2})$, and
$c_\theta(h)=\exp(-\frac12d^{2}/dw^{2})+O(h^{1/2})$ on this finite-dimensional
space. Writing $\LP(y)=(y-\lambda)^{q}g(y)$ with $g(\lambda)\neq0$, we have
\[
  \LP\bigl(\lambda(1+\sqrt h\,w)\bigr)
  =(\lambda\sqrt h)^{q}g(\lambda)\bigl(w^{q}+O(h^{1/2})\bigr).
\]
Since $\exp(-\frac12d^{2}/dw^{2})w^{q}=\mathrm{He}_q(w)$,
\[
  (\lambda\sqrt h)^{-q}Q_h\bigl(\lambda(1+\sqrt h\,w)\bigr)
  =g(\lambda)\,\mathrm{He}_q(w)+O(h^{1/2})
\]
uniformly on compact sets. The zeros of $\mathrm{He}_q$ are real and simple, so
Rouch\'e's theorem gives exactly one zero $w_i=\xi_i+O(h^{1/2})$ near each
$\xi_i$. Finally, put $s=\sqrt h$. Before division, $Q_{s^{2}}(\lambda(1+sw))$ is a
polynomial in $w$ whose coefficients are holomorphic in $s$ near $0$, because
$Q_h$ involves only $h=s^2$ and the substitution only $s$. The estimate just
proved shows that each coefficient divided by $s^{q}$ stays bounded as
$s\to0$, so the singularity of the quotient at $s=0$ is removable and the
left-hand side is a polynomial in $w$ with coefficients holomorphic in $s$,
equal to $g(\lambda)\mathrm{He}_q(w)$ at $s=0$. By the implicit function theorem each $w_i$ is holomorphic in $s$. For odd
$q$ the equation is invariant under $(s,w)\mapsto(-s,-w)$, so uniqueness of
the zero near $w=0$ gives $w(-s)=-w(s)$; then $s\,w(s)$ is even in $s$ and
that zero is holomorphic in $h$. Hence
$r_k=k\lambda(1+k^{-1/2}w_i(k^{-1/2}))$ expands in powers of $k^{-1/2}$, and
differencing gives the spacing law.
\end{proof}

For $P(x)=x^{2}-2\sqrt2\,x+1$, whose transform has the double zero
$\lambda=1/\sqrt2$, the quotient $(r_k-k\lambda)/\sqrt k$ takes the values
$-0.5727$, $-0.6930$, $-0.7057$, and $-0.70697$ at $k=10^{2},10^{4},10^{6}$, and
$10^{8}$, approaching $\lambda\xi_1=-1/\sqrt2=-0.70711$. For the triple zero of
$\LP=(y-1)^{3}(y+2)$ the three quotients at $k=10^{6}$ are $-1.7284$, $0.0027$, and $1.7357$, against $-\sqrt3$, $0$, and $\sqrt3$. The limiting spacing survives,
$r_k-r_{k-1}=\lambda+\lambda\xi_i/(2\sqrt k)+O(k^{-3/2})$, but with the slower
rate $k^{-1/2}$ in place of the $O(k^{-2})$ of Corollary~\ref{cor:discrete}. The earlier version of this paper predicted a scale $k^{1-1/q}$ under the condition $\RP(\lambda)\neq0$; that
condition cannot hold for $q\ge3$, and Proposition~\ref{prop:multiple} replaces
the prediction. Zeros that are distinct but at mutual distance of order $|\lambda|k^{-1/2}$ lead in the same way to the polynomial
$\exp(-\frac12d^{2}/dw^{2})\prod_i(w-\beta_i)$ in place of $\mathrm{He}_q$; we do
not pursue that cluster case here. The identity
$\exp(-\frac t2\,d^{2}/dw^{2})\,w^{q}=t^{q/2}\mathrm{He}_q(w/\sqrt t)$ behind
the proof is the classical heat-polynomial identity \cite{RosenbloomWidder1959}.
For the heat flow applied to high powers of a polynomial, the corresponding
spreading of each zero is studied by H\"ofert, Jalowy, and Kabluchko
\cite{HoefertJalowyKabluchko2026}.

\section{Limitations and open problems}\label{sec:limitations}

The transformed zero $\lambda=0$ needs no separate treatment, by
Proposition~\ref{prop:pinned}. The degeneration of Section~\ref{subsec:degeneration} is regular in its
simplest form. If $a_0$ varies with $a_1\neq0$ and the other coefficients
fixed, then $Q_h(y)=a_0+a_1y/(1+h)+O(y^{2})$ near $(a_0,h,y)=(0,0,0)$, and
the implicit function theorem in $(a_0,h)$ jointly gives
$y(a_0,h)=-(1+h)a_0/a_1+O(a_0^{2})$ uniformly for small $h$. Hence
$r(\alpha)=-(\alpha+1)a_0/a_1+O(\alpha a_0^{2})$, and with $a_0=\tau/\alpha$
the zero tends to $-\tau/a_1$; this limit explains the product scaling. What
remains open are the degenerate cases: $a_1$ vanishing as well, higher
multiplicity at the origin, or interaction with other nearby zeros, together
with their relation to the merging of discriminant values at $\alpha=-2$ in
Proposition~\ref{prop:degeneration}.

The results are not uniform in the degree. Lemma~\ref{lem:coeff} guarantees the
Taylor representation about $h=0$ only on the common disc $|h|<1/m$,
corresponding to integer samples $k>m$; that analytic radius shrinks with the
degree. This nonuniformity is not merely an
artifact of the method: $\CP$ can itself grow with $m$, as the following family
illustrates. For
$P=\prod_{i=1}^{m}(x-i)$ the largest value of $|\CP(\lambda)|/m$ over the zeros of $\LP$ is
$0.80,0.93,0.92,0.93,0.95$, and $0.96$ at $m=2,3,6,8,12$, and $16$. For the tested
degrees the largest constant term is approximately proportional to $m$,
suggesting an absolute correction of size $m/k$ in the scaled roots. These
computations do not establish an asymptotic bound as $m\to\infty$ or a necessary
degree-uniform condition on $k$, and we do not claim a bound uniform in the degree.
Numerically, $|r_k/k-\lambda|$ is close to $m/k$ across that range; this agreement is an
empirical observation, and we make no optimality claim for $m/k$.

A second scale is visible in the coefficient family itself. Summing the
logarithms in \eqref{eq:cjh} gives
\begin{equation}\label{eq:gaussian}
  \prod_{\ell=1}^{j}\Bigl(1+\frac{\ell}{k}\Bigr)^{-1}
  =\exp\Bigl[-\frac{j(j+1)}{2k}+O\Bigl(\frac{j^{3}}{k^{2}}\Bigr)\Bigr],
\end{equation}
uniformly for $j\le ck$ with fixed $c<1$. Hence $c_j(1/k)$ departs appreciably
from $1$ once $j$ is of order $\sqrt k$. At
$k=10^{4}$ the ratio of $c_j(1/k)$ to $\exp[-j(j+1)/(2k)]$ is $1.0017$ at
$j=\sqrt k$ and $1.013$ at $j=2\sqrt k$. Whether that is also the threshold for
the root expansion depends on the polynomial, since the expansion degrades when
$\DP/\CP$ becomes comparable with $k$, and that ratio is not of a fixed order in
$m$. Empirically, the largest value of $|\DP/\CP|$ over the zeros of $\LP$ grew
roughly like $m$ for $P=\prod_{i=1}^{m}(x-i)$, from $0.27$ at $m=2$ to $4.0$ at
$m=20$, and also for $a_j\equiv1$. For the truncated exponential $a_j=1/j!$
(Example~1 of \cite[\S6]{BorweinChenDilcher1995}) it grew like $m^{2}$: the
successive ratios at $m=4,8,16,32,64$ approach $4$.

This fixed-degree analysis leaves a double-scaling problem untouched: the behavior of the zeros
when the degree grows with the integration order, $m=m(k)$, with the regime
$m\asymp\sqrt k$ singled out by \eqref{eq:gaussian}. When $m\asymp k$ and the
zeros of $\LP$ are nonnegative, Remark~\ref{rem:ffree} gives the limiting zero
distribution: free multiplicative convolution with a Marchenko--Pastur law.
The zero
asymptotics for $m\asymp\sqrt k$, and the conditions under which the fixed-degree
limit extends to analytic inputs, remain natural open problems; a degree-uniform statement would
require a different normalization, and the present finite-degree argument does
not pass directly to an infinite series.

Beyond the degree, the admissible $h_0$ for a selected zero depends also on the
separation of $\lambda$ from the other zeros of $\LP$.
Explicit thresholds can be obtained from a contour lower bound for $|\LP|$
together with coefficient bounds; no uniform sharp threshold is asserted here,
and the localization used here is the annulus
$c_-(P)\,k<|z|<c_+(P)\,k$ of \cite[\S6]{BorweinChenDilcher1995}.

By Proposition~\ref{prop:endpointall}, in every degree the nonzero zeros of
a real polynomial stay real at every positive order exactly when $\LP$ is
real-rooted, and they then stay simple. So if $\LP$ is real-rooted, no collision
of nonzero zeros occurs at positive order, and $\Phi_P$ has no positive zero
when $a_0\neq0$. Without this hypothesis, a generic collision satisfying the
nondegeneracy conditions of Remark~\ref{rem:puiseux} has the square-root local
form; nongeneric collisions need not cause the zeros to leave the real axis. This extends
Corollary~\ref{cor:endpoint}.

In higher degree two questions about the collision polynomial $\Phi_P$ of
\eqref{eq:spurious} remain open: whether the positive zeros of $\Phi_P$ are generically
simple, so that every collision is an ordinary square-root branch point as in
degree two, and whether the bound $m(m-1)/2$ is attained.

Two directions suggested by neighboring literature deserve explicit mention.
First, the results here are fixed-degree. For repeated \emph{differentiation}
the large-degree behavior of the root density is described by a nonlocal
transport equation \cite{Steinerberger2019}, rigorously justified in the
periodic setting for a class of trigonometric polynomials \cite{KiselevTan2022},
and is formally identified with a fractional free convolution power
\cite{Steinerberger2023,ShlyakhtenkoTao2020}. Hall, Ho, Jalowy, and Kabluchko also analyze large-degree repeated
integration \cite[\S8]{HallHoJalowyKabluchko2026} through the case $b<0$ of
$z^{a}(d/dz)^{b}$. They treat Kac, Weyl, and exponential random polynomials, for
which the limiting zero distribution can concentrate mass on circles. Those results
concern the zero distribution as the degree grows, whereas the results here
fix the polynomial and follow individual zeros. Related
zero-dynamics questions arise for Lee--Yang zeros: Kabluchko
\cite{Kabluchko2025} relates the unitary Hermite polynomials and their zeros to
the backward heat flow, and identifies the limiting Lee--Yang zero distribution
of the Curie--Weiss model.

Second, by Remark~\ref{rem:expectation} the transform is
$\LP(x)=\mathbb{E}[P(xT)]$ for $T$ standard exponential, so the factorial
weights are simply the moments of $T$. Replacing $T$ by another positive random
variable gives $L_T[P](x)=\sum_j a_j\,\mathbb{E}[T^{j}]\,x^{j}$, and one may ask
for which moment sequences an analogue of the scaling theorem and of
Theorem~\ref{thm:branch} holds, and which operators such weights arise from.

The monodromy of the covering $\pi$ is the full symmetric group under the
nonvanishing conditions stated after Proposition~\ref{prop:curve}. Classifying
the exceptional coefficient configurations not covered by those sufficient
conditions is open.

Remark~\ref{rem:genus} gives only the generic Newton-polygon bound on the
genus of the trajectory curve. Two concrete questions are left open here: computing that genus exactly,
which requires the singularity structure of $V$; and identifying the curve
(elliptic in the nondegenerate case) that arises for a given cubic $P$.

\section{Scope and novelty}\label{sec:scope}

The main ingredients are classical: repeated integration with zero constants;
gamma-ratio asymptotics \cite{DLMF}; Riemann--Liouville fractional
integration
\cite{Podlubny1999,SamkoKilbasMarichev1993,KilbasSrivastavaTrujillo2006};
Rouch\'e's theorem, the argument principle, and the residue calculus
\cite{Ahlfors1979,Marden1966}; the inverse Borel coefficient
transform \cite{Hardy1949}; the classical theory of multiplier and
zero-diminishing transformations
\cite{PolyaSchur1914,RahmanSchmeisser2002,CravenCsordas1977,CravenCsordas1995,CravenCsordas2004,PratherShaw1983}; Stieltjes's discriminant of the Laguerre
polynomials \cite{Szego1975}; and the heat polynomials
\cite{RosenbloomWidder1959}. The leading scaling
$r_k/k\to\lambda$ is due to Borwein, Chen, and Dilcher
\cite{BorweinChenDilcher1995} and is not claimed as new. In
Section~\ref{sec:structural} the zero-diminishing property, the P\'olya--Schur
criterion, and Szeg\H o's theorem on truncated exponentials are quoted rather
than proved; Propositions~\ref{prop:noncreation}
and~\ref{prop:hermite} are proved here.

The comparisons drawn above were developed with reference to
\cite{BorweinChenDilcher1995,BelloPijeiraRivero2020,CaparattaPauliSaidak2025,WaniSheikh2026,HallHoJalowyKabluchko2026},
which the author has reviewed and takes responsibility for; the remaining
references are cited for the specific results attributed to them. The attribution of the leading scaling to
\cite[Prop.~6.4]{BorweinChenDilcher1995} is exact, including the normalization
and the notation $L$. Within the literature reviewed we did not locate the fixed-polynomial
large-order formulas proved here for $\CP$ and $\DP$, the resulting
discrete-spacing and continuous-velocity pairing, or the identification of
the asymptotic line. The quadratic collision value is that of
\cite[Cor.~3.8]{CaparattaPauliSaidak2025} after reversing the order
(Remark~\ref{rem:cps}). Later work on Borwein--Chen--Dilcher polynomials
\cite{BelloPijeiraRivero2020}, on zero flows under differentiation and
integration \cite{MartinezFinkelshteinRakhmanov2024,HallHoJalowyKabluchko2026},
and on the heat flow \cite{HoefertJalowyKabluchko2026} concerns families
whose degree grows. This comparison is a delimited literature statement, not a
definitive priority claim. The plane-curve reading of
Section~\ref{sec:curve} is offered as a synthesis rather than as new
mathematics: the curve is classical in type, and
Propositions~\ref{prop:curve} and~\ref{prop:asymptote} are short consequences
of its Newton polygon. Its value is that it supplies a second and independent
derivation of $\CP$ and a structural explanation for the features that
distinguish degree two. Propositions~\ref{prop:pinned}
and~\ref{prop:degeneration} are likewise elementary. They are included because
they close a scope gap. The zero $\lambda=0$, which a statement restricted to
nonzero zeros leaves as an exceptional case, is in fact rigid. The
degeneration in which a nonzero transformed zero is driven to the origin is a
regular perturbation in its simplest case and a specific unfolding question in
the degenerate ones.

The higher-degree factorization \eqref{eq:spurious} is proved by a
generic-coefficient valuation argument followed by polynomial-identity
continuation. The argument uses the Eisenstein property of the degree-$i$
Hensel factor at $\alpha=-i$ and is logically independent of computation. The
low-degree symbolic checks recorded below are corroboration and not part of
any proof.
The constants and asymptotic rates listed here were verified in exact rational arithmetic and in high-precision arithmetic
carried to forty significant digits. This verification establishes agreement to that
working precision, not certified bounds; the scripts are available from the
author. The items checked are the expansions of
Theorem~\ref{thm:branch} on eleven distinct real
and complex branches, the exact quadratic of Section~\ref{subsec:quadexact}, the
constants of Section~\ref{subsec:complex}, the exact polygon identity
\eqref{eq:polygon} at several noninteger orders, the collision data of Theorem~\ref{thm:collision}, and identities
\eqref{eq:endpoints} and \eqref{eq:spurious} for $m\le5$. The results added since the earlier version were checked in the same way:
the operator form of Remark~\ref{rem:torus}, the three representations of
Remark~\ref{rem:ffree} and its Laguerre asymptotics, the sum rules
\eqref{eq:sumrules}, the numerical examples of
Proposition~\ref{prop:multiple} and its spacing law, the order-zero values of
Propositions~\ref{prop:initvel} and~\ref{prop:secondzero}, and the real-zero
counts of Proposition~\ref{prop:hermite} for $n\le12$.

\section{Conclusion}\label{sec:conclusion}

For a fixed polynomial the factorially weighted transform
$\LP(z)=\sum_{j=0}^{m}a_j\,j!\,z^{j}$ is the classical root-scale limit under
repeated integration with zero constants. Retaining two further terms in the
scaled coefficient family gives, at a simple nonzero zero $\lambda$ of $\LP$,
$r_k=k\lambda+\CP(\lambda)+\DP(\lambda)/k+O(k^{-2})$: the constant term
supplies a parallel asymptotic line, while the $1/k$ term controls the leading
correction to successive spacing. The same local branch, under $h=1/\alpha$,
gives the continuous Riemann--Liouville trajectory and its velocity. Because
the two scaled families are literally the same rational family, the discrete
sequence is a sampling of the continuous trajectory. The discrete spacing is
the exact mean of the continuous velocity over one unit of order. Hence
$\lambda$ carries three equivalent asymptotic meanings: scaled root location,
discrete spacing, and continuous-order velocity.

The binomial case makes the mechanism explicit through the Factorial Fraction
and yields an exact regular-polygon configuration at every positive order,
while the quadratic case shows how real branches collide and become complex at
a finite fractional order determined by the two discriminants. In every degree the collision orders are the positive zeros of a
discriminant that carries a universal factor at negative orders, and the
real-rootedness of $\LP$ alone decides whether the zeros stay real, and
simple, at every positive order. At a multiple zero of $\LP$ the expansion in $1/k$ gives way to
a Hermite law on the scale $\sqrt k$, and the whole family of trajectories is a
single plane curve whose two ends are governed by dual transforms. Together these
results sharpen the classical scaling theorem without changing its underlying
transform, and they isolate concrete directions for a cluster and
higher-degree theory.

\section*{Origin of the results}
The problem studied here was formulated by the author in an earlier unpublished manuscript, \emph{The Factorial Fraction: Special Limits Emerging from the Difference Between the Positive Root of Certain Polynomials and the Positive Root of their Derivatives as Exponentiation Approaches Infinity}, on the Factorial Fraction $F(n_1,n_2)=(n_2!/n_1!)^{1/(n_1-n_2)}$.
That manuscript introduced $F$ as the organizing quantity. It posed the
problem of following the zeros of repeated integrals taken with every constant
of integration equal to zero, and it worked out the quadratic and binomial
cases. It also found that $e^{\gamma}$ and $\pi/4$ arise from $F$, as a limit at small
exponent and at a half-integer argument respectively. The present
paper develops that program. The author also asked whether half-integrals, third-integrals, and integrals of vanishing order fit the theory; this question led to
the Riemann--Liouville embedding of Section~\ref{sec:parameter}. The author's
questions on the Factorial Fraction, on number theory, and on the geometry of
the zeros opened the conversations from which three further contributions came:
the identification of the regime $m\asymp\sqrt k$ in Section~\ref{sec:limitations}, the monodromy
paragraph after Proposition~\ref{prop:curve}, and the swept-area law of
Remark~\ref{rem:superconv}. The operator and the leading scaling limit
were found earlier by Borwein, Chen, and Dilcher \cite{BorweinChenDilcher1995},
and the evaluations behind the two special values are classical. The first version of this paper is arXiv:2610.12498v1 \cite{LawlessARDv1}. The
present version adds
Remarks~\ref{rem:torus}--\ref{rem:superconv}, \ref{rem:nochaos},
and~\ref{rem:area}; Section~\ref{subsec:sumrules};
Proposition~\ref{prop:endpointall}; Remark~\ref{rem:negorder}; the paragraph on
monodromy after Proposition~\ref{prop:curve}; Remark~\ref{rem:cps}; the uniform bound in Section~\ref{subsec:harmonic}; and
Section~\ref{sec:multiple} with Proposition~\ref{prop:multiple}, which replaces
a heuristic of the earlier version. It also adds a short
paragraph on motivating settings in the introduction and the comparisons with
Laguerre discriminants, Laguerre Galois groups, and heat polynomials.

\section*{Tool and computational resource disclosure}
The author used large language models from two developers, in distinct roles.
Claude (Anthropic; Claude Opus~5 and Claude Opus~5.5) was used for literature
identification and verification, symbolic and numerical checking, drafting, and typesetting. Several results took their present form in that work, among them
the expansion of Theorem~\ref{thm:branch} and its consequences, the
identification in Section~\ref{subsec:identification}, the collision results of
Section~\ref{sec:collision} other than those credited below, the plane-curve treatment of
Section~\ref{sec:curve}, and Proposition~\ref{prop:multiple}. Models from
OpenAI (GPT-5.6 Sol, GPT-6.0 Astra, and GPT-6.1 Sol) were used in exploratory conversations. These models proposed, among other things, the connection of the binomial
Factorial Fraction to the general transform $\LP$, the Riemann--Liouville
embedding, the initial-value-problem formulation of Section~\ref{sec:classical},
the exact binomial configuration \eqref{eq:polygon}, the general discriminant
criterion of Proposition~\ref{prop:discpoly} and Corollary~\ref{cor:gencoll},
the square-root model of Remark~\ref{rem:puiseux}, the Laplace representation \eqref{eq:laplace},
Proposition~\ref{prop:pinned}, the representations of Remark~\ref{rem:ffree},
the endpoint criterion of Proposition~\ref{prop:endpointall}, the operator form
of Remark~\ref{rem:torus}, the superconvergence index of
Remark~\ref{rem:superconv}, the uniform bound in Section~\ref{subsec:harmonic}, and the full-monodromy observation after Proposition~\ref{prop:curve}; they also
corrected earlier drafts, among other points in the radius of holomorphy in
Lemma~\ref{lem:coeff} and the treatment of the direction $[0:1]$ in
Proposition~\ref{prop:curve}.
GPT-6.0 Astra also reviewed late drafts; its findings, including the prior
quadratic collision result credited in Remark~\ref{rem:cps}, the centroid
normalization in Remark~\ref{rem:ffree}, the simplicity statement of
Proposition~\ref{prop:endpointall}, a counterexample correcting a collision
statement in Section~\ref{sec:limitations}, and the boundary-convergence
qualification in Remark~\ref{rem:negorder}, were checked and incorporated. Numerical verification used Python: exact rational arithmetic, high-precision
decimal arithmetic, and NumPy and SciPy for floating-point work, as described
in Section~\ref{sec:scope}. No language model is an author. These uses are reported in accordance with
arXiv's policy on authors' use of generative AI language tools. The
author directed the work; reviewed the definitions, statements, and proofs; and
takes responsibility for the content, including any errors.

\section*{Funding}
This research received no external funding.

\section*{Conflict of interest statement}
The author declares no conflicts of interest.

\section*{Data availability statement}
This study used no external or experimental datasets. The scripts and
computational outputs supporting the exact and numerical checks reported in
the paper are available from the author on reasonable request.

\providecommand{\bysame}{\leavevmode\hbox to3em{\hrulefill}\thinspace}
\providecommand{\MR}{\relax\ifhmode\unskip\space\fi MR }
\providecommand{\MRhref}[2]{%
  \href{http://www.ams.org/mathscinet-getitem?mr=#1}{#2}
}
\providecommand{\href}[2]{#2}

\end{document}